\setlist{itemsep=0pt, topsep=0pt}
\newtheorem{theorem}{Theorem}[section]
\newtheorem{lemma}[theorem]{Lemma}
\newtheorem{claim}{Claim}
\newtheorem{conjecture}[theorem]{Conjecture}
\newtheorem{corollary}[theorem]{Corollary}
\newtheorem{proposition}[theorem]{Proposition}
\newtheorem{problem}[theorem]{Problem}
\theoremstyle{definition}
\newtheorem{case}{Case}
\numberwithin{subcase}{case}
\newtheorem{definition}[theorem]{Definition}
\numberwithin{property}{theorem}
\newcommand{\ep}{\epsilon}
\newcommand{\floor}[1]{\left\lfloor#1\right\rfloor}
\newcommand{\ceiling}[1]{\left\lceil#1\right\rceil}
\newcommand{\cL}{{\mathcal L}}
\newcommand{\cH}{{\mathcal H}}
\newcommand{\cM}{{\mathcal M}}
\newcommand{\cS}{{\mathcal S}}
\newcommand{\vB}{\vec{B}}
\newcommand{\vC}{\vec{C}}
\newcommand{\vE}{\vec{E}}
\newcommand{\vF}{\vec{F}}
\newcommand{\vG}{\vec{G}}
\newcommand{\vH}{\vec{H}}
\newcommand{\vK}{\vec{K}}
\newcommand{\vM}{\vec{M}}
\newcommand{\vS}{\vec{S}}
\newcommand{\vT}{\vec{T}}
\newcommand{\vW}{\vec{W}}
\newcommand{\vcS}{\vec{\cS}}
\newcommand{\ex}{\mathrm{ex}}
\newcommand{\defeq}{\vcentcolon=}
\title{Orientations of graphs omitting non-edge-critical directed graphs}
\author{H. Sheats}
\date{\today}
\thanks{The author was partially supported by NSF Award DMS-2115518}
\address{Department of Mathematics, Statistics, and Computer Science, University of Illinois Chicago}
\email{hshea3.uic.edu}
\begin{document}

\begin{abstract}
   In 1974, Erd\H{o}s asked the following question: given a graph $G$ and a directed graph $\vH$, how many ways are there to orient the edges of $G$ such that it does not contain $\vH$ as a subgraph? We denote this value by $D(G, \vH)$. Further, we let $D(n, \vH)$ denote the maximum of $D(G, \vH)$ over all graphs $G$ on $n$ vertices. In 2006, Alon and Yuster gave an exact answer when $\vH$ is a tournament. In 2023, Buci\'c, Janzer, and Sudakov gave asymptotic answers for all directed graphs $\vH$, and in the same paper, they gave an exact answer when $\vH$ is a directed cycle. In this paper, we give a better bound for some specific non-bipartite directed graphs. Further, we obtain exact values of $D(G, \vH)$ for some small non-edge-critical directed graphs $\vH$. Finally, for these graphs, we classify all graphs $G$ that attain the bound $D(G, \vH) = D(n, \vH)$.
\end{abstract}

\maketitle
\section{Introduction}
Given a directed graph $\vec{H}$ and an (undirected) graph $G$, how many ways can one orient the edges of $G$ to form a directed graph $\vec{G}$ which does not contain a copy of $\vec{H}$?  This question was first posed by Erd\H{o}s in 1974 \cite{Erdos1974}. We call a directed graph \textit{$\vH$-free} if it does not contain $\vH$ as a subgraph.  Let $D(G, \vH)$ denote the number of $\vH$-free orientations of the edges a graph $G$. Further, let $D(n, \vH)$ denote the maximum of $D(G, \vH)$ over all graphs $G$ on $n$ vertices. Erd\H{o}s's question then asks what is $D(n, \vH)$, and in particular, what graphs $G$ satisfy $D(G, \vH) = D(n, \vH)$.

Let $\vH$ be a directed graph. We denote by $H$ the underlying graph of $\vH$, i.e., the graph formed by ``forgetting" the orientations of the edges of $\vH$.  An easy lower bound for $D(n, \vH)$ can be obtained by observing that if a graph $G$ does not contain $H$ as a subgraph, then any orientation $\vG$ of $G$ does not contain the directed graph $\vH$ as a subgraph. Thus, the number of $\vH$-free orientations of $G$ is  $2^{|E(G)|}$, the total number of orientations of the edges of $G$. When we take $G$ to be an extremal graph for $H$, this gives us $D(n, \vH) \geq D(G, \vH) \geq 2^{\ex(n, H)}$. This bound is known to be tight in some but not all cases, as we will see below. Our main problem of study is as follows:

 \begin{problem}
 For which directed graphs $\vH$ is $D(n, \vH) = 2^{\ex(n, H)}$?
 \end{problem}

A \textit{tournament} is a directed graph $\vK_r$ such that there is exactly one directed edge between any two vertices. The first major progress on this problem was Alon and Yuster, in 2006, who, in \cite{Alon2006}, showed the following:

\begin{theorem}[Alon, Yuster \cite{Alon2006}]
    If $\vK_r$ is a tournament on $r$ vertices, then $D(n, \vK_r) = 2^{\ex(n, K_r)}$ for large enough $n$.
\end{theorem}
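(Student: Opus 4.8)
\noindent\textit{Proof strategy.} The lower bound $D(n,\vK_r)\ge 2^{\ex(n,K_r)}$ is exactly the one recorded above: the Tur\'an graph $T_{r-1}(n)$ is $K_r$-free, so each of its $2^{e(T_{r-1}(n))}=2^{\ex(n,K_r)}$ orientations is automatically $\vK_r$-free. All the content is in the matching upper bound, which I would establish in the sharper form: \emph{for $n$ large in terms of $r$, every $n$-vertex graph $G$ containing a copy of $K_r$ satisfies $D(G,\vK_r)<2^{\ex(n,K_r)}$}. Combined with the bound $D(G,\vK_r)=2^{e(G)}\le 2^{\ex(n,K_r)}$ for $K_r$-free $G$ (Tur\'an's theorem), this yields the equality, and as a bonus shows that equality holds only for $G=T_{r-1}(n)$.

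The engine is a local fact about tournaments. If $xy$ is an edge of a copy $Q$ of $K_r$, then for \emph{either} orientation of $xy$ one can orient the remaining $\binom{r}{2}-1$ edges of $Q$ so that $Q$ becomes a copy of $\vK_r$: choose an arc of $\vK_r$ pointing the prescribed way, send $x,y$ to its endpoints, and send the other $r-2$ vertices of $Q$ to those of $\vK_r$ arbitrarily. Thus, once one edge of $Q$ is oriented, at most $2^{\binom{r}{2}-1}-1=(1-\eta)2^{\binom{r}{2}-1}$ of the completions avoid $\vK_r$, where $\eta=\eta(r):=2^{-\binom{r}{2}+1}>0$. Call a \emph{book} a set of copies of $K_r$ all containing a common edge (the \emph{spine}) and pairwise meeting only in it; conditioning on the orientation of the spine makes the pages edge-disjoint, so a book with $s$ pages has at most a $(1-\eta)^s$ fraction of $\vK_r$-free orientations. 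Hence if $G$ contains pairwise edge-disjoint books with $s_1,\dots,s_k$ pages, then $D(G,\vK_r)\le 2^{e(G)}\prod_i(1-\eta)^{s_i}$.

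Put $t:=e(G)-\ex(n,K_r)\ge 1$ and fix a constant $s_0=s_0(r)$ with $(1-\eta)^{s_0}<\tfrac12$; it suffices to find in $G$ a family of $t$ pairwise edge-disjoint books with $\ge s_0$ pages each, since then $D(G,\vK_r)\le 2^{e(G)}(1-\eta)^{s_0t}<2^{e(G)-t}=2^{\ex(n,K_r)}$. If $t>\delta n^2$ for a small fixed $\delta=\delta(r)$, the edge density of $G$ exceeds the Tur\'an threshold by a constant, so Szemer\'edi's regularity lemma gives a reduced graph above the Tur\'an density; by supersaturation it has $\Omega(t^r)$ copies of $K_r$, i.e. many dense regular $r$-tuples, and decomposing these (via regularity / the R\"odl nibble) produces $\Omega(n^2)$ pairwise edge-disjoint copies of $K_r$ — more than enough single-page books, so in fact $D(G,\vK_r)\le 2^{e(G)-\Omega(n^2)}\le 2^{\ex(n,K_r)}$. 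If $1\le t\le\delta n^2$, then $G$ lies just above the Tur\'an threshold, so for $\delta$ small it has $o(n^r)$ copies of $K_r$; by the $K_r$-removal lemma together with the Erd\H{o}s--Simonovits stability theorem $G$ differs from some $T_{r-1}(n)$ in $o(n^2)$ edges, so at least $t$ edges of $G$ lie inside the parts, and each such edge $uv\in V_a$ is the spine of $(1-o(1))\prod_{b\ne a}|V_b|=\Omega(n^{r-2})$ copies of $K_r$ (transversals of the other parts complete to $u$ and $v$). One then selects $s_0$ pairwise edge-disjoint pages at each of the $t$ spines greedily, keeping all $t$ books edge-disjoint.

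I expect this last selection to be the main obstacle. The crude ``a single copy of $K_r$ forbids only a $(1-\eta)$ fraction of orientations'' estimate is useless near the extremal graph — it would require $\asymp t/\eta$ edge-disjoint copies of $K_r$, whereas a near-extremal $G$ has only $\le t$ of them — so one must genuinely exploit that every excess edge lies in $\Omega(n^{r-2})$ copies of $K_r$, and pack the resulting pages without ever reusing an edge across the $t$ books. Because the excess edges may concentrate at a few vertices, the greedy selection has to treat those high--excess-degree vertices separately (otherwise the pages through such a vertex exhaust the edges incident to it), and one must check that the number of pages blocked by previously chosen edges stays well below the $\Omega(n^{r-2})$ available, which is where $\delta$ is forced small and $n$ large. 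Carrying this out uniformly over the whole range $1\le t\le\delta n^2$ is the crux of the argument.
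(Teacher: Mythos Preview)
The paper does not prove this theorem directly --- it is cited from \cite{Alon2006} --- but it is recovered as the edge-critical special case of the paper's own machinery: since $K_r$ is edge-critical, $\cM(K_r)=\{K_2\}$ and both orientations of an edge lie in $\cM(\vK_r)$, so $\cM'(\vK_r)=\{K_2\}$, $\ex(n,\cM'(\vK_r))=0$, and Theorem~\ref{general_graph_theorem} gives $D(n,\vK_r)=2^{t(n,r-1)}$. That route runs through Lemma~\ref{main_lemma} (directed regularity), the notion of relevant orientations (Lemma~\ref{relevant_orientations_lem}), and the iterative vertex-deletion Lemma~\ref{general_thm_lem}: from any $G$ with excess $\vK_r$-free orientations one finds a vertex (or a copy of some $M\in\cM'$) whose deletion strictly \emph{increases} the normalized excess $\log_2 D - t(\cdot,r-1)$, and iterates to a contradiction.

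Your book-packing route is genuinely different, and the local engine is sound: any oriented edge of a $K_r$ extends to a copy of $\vK_r$, so a book with $s$ pages on a common spine carries at most a $(1-\eta)^s$ fraction of $\vK_r$-free orientations, and edge-disjoint books multiply. The dense case $t>\delta n^2$ goes through as you say, via supersaturation and a greedy packing of $\Omega(n^2)$ edge-disjoint $K_r$'s.

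The gap is precisely where you place it, but it is an actual obstruction rather than a missing detail. Take $r=3$ and let $G$ be $T_2(n)$ together with a star of $t\approx n/4$ edges all at a single vertex $u\in V_1$. Every candidate spine meets $u$, and every page through $u$ spends one of the roughly $n/2$ edges from $u$ to $V_2$; hence across \emph{all} books with spine at $u$ there are at most about $n/2$ edge-disjoint pages in total, while you need $t\cdot s_0\ge 3t>n/2$. No refinement of the greedy selection helps here --- the supply of edges at $u$ is simply exhausted --- so ``treat high-excess-degree vertices separately'' has to mean something qualitatively beyond packing. What actually controls such a vertex is the global constraint exploited in Case~1 of Lemma~\ref{general_thm_lem}: in any relevant $\vK_r$-free orientation with $u$ having many neighbours in every part, the pair $(u,V_i)$ must be almost pure for at least two parts, which cuts the number of ways to orient the edges at $u$ by a factor $2^{-cn}$, far stronger than any constant-page book gives. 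Your outline contains no analogue of this almost-purity step, and without it (or a substitute of comparable strength) the argument does not close for $t$ of order $n$ with the excess concentrated at $O(1)$ vertices.
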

More recently in \cite{botler2021}, Botler, Hoppen and Mota proved that if $\vcS_k$ is a the family of strongly connected tournaments on $k$ vertices, then $D(n, \vcS_k) - 2^{\ex(n, K_k)}$.

In 2023, Buci\'c, Janzer, and Sudakov \cite{bucic2023} proved the following result for odd cycles:

\begin{theorem}[Buci\'c, Janzer, Sudakov \cite{bucic2023}]
    If $\vC$ is an odd cycle directed in the natural way, i.e. as a directed cycle, then, for large enough $n$,
    $$D(n, \vC) = 2^{\ex(n, C)}.$$
\end{theorem}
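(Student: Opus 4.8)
The plan is the following. Write $\vC = \vC_\ell$ with $\ell = 2k+1$, let $C = C_\ell$ denote its underlying cycle, and recall that for large $n$ one has $\ex(n,C) = \floor{n^2/4}$, with $T \defeq K_{\ceiling{n/2},\floor{n/2}}$ the unique extremal graph. The lower bound is trivial: every orientation of the bipartite graph $T$ is $\vC$-free, so $D(n,\vC)\ge 2^{\floor{n^2/4}}$. Everything is in the upper bound $D(G,\vC)\le 2^{\floor{n^2/4}}$ for all $n$-vertex $G$, together with the claim that equality forces $G = T$. I would first handle $e(G)\le\floor{n^2/4}$: then $D(G,\vC)\le 2^{e(G)}\le 2^{\floor{n^2/4}}$, with equality forcing $e(G)=\floor{n^2/4}$ and every orientation of $G$ to be $\vC$-free; the latter means $G$ is $C$-free, so $G$ is an extremal $C$-free graph and hence $G=T$. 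From now on assume $e(G)\ge\floor{n^2/4}+1$, so that the goal is $D(G,\vC) < 2^{\floor{n^2/4}}$.

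Next comes a stability reduction. Using the asymptotic estimate $D(n,\vC) = 2^{(1+o(1))\ex(n,C)}$ together with a stability version of it (which I expect to extract from the container-type argument behind the asymptotic estimate), there is $\eps = \eps(\ell) > 0$ such that either $D(G,\vC) \le 2^{(1/4-\eps)n^2} < 2^{\floor{n^2/4}}$ — in which case we are done — or $G$ becomes bipartite after deleting at most $\eps n^2$ edges. In the latter case I would fix a maximum cut $(A,B)$ of $G$, so that every vertex has at least as many neighbours across the cut as inside its own part; write $F$ for the set of internal edges, $s = |F|$, and $G_0$ for the bipartite graph between $A$ and $B$. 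Since $e(G_0)\le |A||B|\le\floor{n^2/4} < e(G)$ we have $s\ge 1$; since $e(G)\le |A||B|+s$ and $s\le\eps n^2$, the parts are within $O(\sqrt{\eps}\,n)$ of $n/2$; and $G_0$ is nearly complete bipartite.

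The heart of the argument is to write $D(G,\vC) = \sum_\sigma N(\sigma)$, where the sum is over the $2^s$ orientations $\sigma$ of $F$ and $N(\sigma)$ is the number of orientations of $G_0$ extending $\sigma$ to a $\vC$-free orientation of $G$. One internal edge already forces a lot: if $xy\in F$ with $x,y$ in the same part, then no $\vC$-free orientation of $G_0$ may simultaneously contain a directed $x$-to-$y$ path and a directed $y$-to-$x$ path of length $\ell - 1 = 2k$, since either one closes a directed $\vC$ through $xy$; and, because $2k$ is even and $x,y$ lie in the same part, $G_0$ contains $\Theta(n^{2k-1})$ such paths in each direction. The crucial observation is that once the $k-1$ interior vertices of such a path lying in the part of $x,y$ are fixed, the existence of a correctly directed completion is — after conditioning on the orientations of the edges at $x$ and at $y$ — an event depending only on the edges incident to those interior vertices; ranging over a linear-sized family of vertex-disjoint choices of interior vertices yields conditionally independent events, each failing with probability $2^{-(1-o(1))n/2}$. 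Hence a uniformly random orientation of $G_0$ contains directed paths in both directions with probability $1-2^{-(1-o(1))n/2}$ (for $k\ge 2$; up to polynomial factors the dominant bad event is that $x$ or $y$ sends all its edges into, or all out of, the other part). Feeding in the constraints from a suitably chosen subfamily of internal edges — chosen so that their short ``path-pools'' can be routed pairwise edge-disjointly in the nearly complete bipartite $G_0$, which is possible because their total edge demand is $O(s)$ and hence, for $\eps$ small enough, less than $e(G_0)$ — and, for the exact statement, using that on any orientation of $G_0$ compatible with all internal constraints almost every internal edge is in fact \emph{forced}, one obtains $N(\sigma) < 2^{e(G_0)-s}$ for every $\sigma$, whence $D(G,\vC) < 2^{e(G_0)}\le 2^{\floor{n^2/4}}$ — the desired contradiction — and uniqueness of $T$ follows by combining with the first case.

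The hard part is exactly this near-bipartite step. The naive route — extracting $\Theta(s)$ pairwise edge-disjoint copies of $\vC$ and multiplying the per-copy saving $1-2^{1-\ell}$ — fails badly, since to beat the factor $2^s$ coming from the internal edges one would need on the order of $2^{\ell-1}s$ edge-disjoint copies, whereas a graph with $\floor{n^2/4}+s$ edges guarantees only about $s/\ell$ of them. One must instead exploit the much stronger ``global'' suppression a single internal edge exerts through its dense bipartite neighbourhood (the $2^{-\Omega(n)}$ estimate above), which does not decompose as a product over isolated $\vC$-constraints. The most delicate sub-case is when the internal edges cluster at a few vertices: such a vertex has at most $|B|\approx n/2$ edges in $G_0$, so the bipartite edges at it form a bottleneck, the path-pools cannot all be made disjoint, and the count has to be carried out orientation-by-orientation — as in the spanning-star example, where one shows that compatibility essentially forces the hub to be a near-source or near-sink, already losing a $2^{-\Omega(n)}$ fraction — rather than via any union bound. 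The same obstacle appears, in milder form, inside the stability reduction of the second step. Once these are dealt with, $T$ is the unique $n$-vertex maximiser and $D(n,\vC) = 2^{\floor{n^2/4}} = 2^{\ex(n,C)}$.
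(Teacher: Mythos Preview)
Your stability reduction to a near-bipartite $G$ with $s\ge 1$ internal edges matches the paper's Lemma~2.1. The gap is in the next step. You want $N(\sigma) < 2^{e(G_0)-s}$ for every orientation $\sigma$ of the internal edges, and you correctly observe that a single internal edge $xy$ imposes a $2^{-\Omega(n)}$ suppression on orientations of $G_0$ (since almost every orientation of the nearly complete bipartite $G_0$ contains directed $x$--$y$ paths of length $\ell-1$ in both directions). But you never show how these suppressions compound to a total factor $2^{-s}$. You yourself flag the clustered case --- all $s$ internal edges at a single vertex $v$, with $d_B(v)$ possibly only about $s$ --- as the bottleneck, and write that the count ``has to be carried out orientation-by-orientation''; that sentence is exactly where the proof should be and is not. (Your probability estimate $2^{-(1-o(1))n/2}$ is also off for $\ell=3$: there the probability that a fixed internal edge closes no directed triangle is $(3/4)^{(1-o(1))n/2}$, not $2^{-(1-o(1))n/2}$.)

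The paper's route, via its Theorem~1.4, avoids the aggregation problem entirely. Since $C$ is edge-critical, $\cM'(\vC)=\{K_2\}$ and $\ex(n,\cM'(\vC))=0$, so Theorem~1.4 already gives $D(n,\vC)\le 2^{t(n,2)}$. The engine behind Theorem~1.4 is Lemma~3.1, an iterative vertex-deletion: after the regularity step, restrict to \emph{relevant} orientations (any two large cross-sets have at least a $1/10$ fraction of edges in each direction, Lemma~2.2); then the directed embedding lemma (Lemma~2.3) forces, in any relevant $\vC$-free orientation, the endpoints of an internal edge to have their edges to the other part almost pure. This caps the number of orientations of the edges at those vertices by $2^{cn}$ for some $c<1-1/p=1/2$, so deleting them strictly increases the ratio $D(\cdot,\vC)/2^{t(\cdot,2)}$; iterating down to $n_0$ vertices yields a graph with more $\vC$-free orientations than it can have edges, a contradiction. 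Each step needs a saving at only one or two vertices, so the question of how the $s$ internal-edge constraints interact never arises.
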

In the same paper \cite{bucic2023}, Buci\'c, Janzer, and Sudakov resolved the question asymptotically for all graphs.

As is noted in \cite{bucic2023}, with slight modifications, the method used to obtain the result for odd cycles can be extended to show that for any orientation $\vH$ of an edge-critical graph $H$ with chromatic number at least 3, $D(n, \vH) = 2^{\ex(n, H)}$.  We give an explicit proof of this fact as a special case of one of our results, Theorem \ref{general_graph_theorem}, below. 

A main goal of this paper is to begin the study of the function $D(n, \vH)$ in the case of some nice orientations of non-edge-critical graphs.

Before introducing our main tools, we introduce a few pieces of notation. For a family of graphs $\cL$, we will write $p = \min\{\chi(L):L\in \cL\}-1$. Further, if $G, H$ are graphs, we denote by $G\otimes H$ the graph consisting of disjoint copies of $G$ and $H$, with edges between every pair of vertices $\{u, v\}$ where $u\in V(G)$ and $v\in V(H)$. We also let $K_{p-1}(t)$ denote the complete $(p-1)$-partite graph with $t$ vertices in each part. In \cite{BALOGH20041}, Balogh, Bollob\'as and Simonovitz gave the following definition: 
\begin{definition}[Balogh, Bollob\'as, Simonovitz \cite{BALOGH20041}]
\label{original_m_def}
    Given a family of graphs $\cL$, the \emph{decomposition family} $\cM(\cL)$ is the family of minimal graphs $M$ such that for some $t$, the graph $M'\otimes K_{p-1}(t)$ contains a graph $L\in \cL$ as a subgraph, where $M'$ denotes the graph $M$ with $t$ isolated vertices adjoined to it.
\end{definition}
In other words, $\cM$ is the family of graphs $M$ such that if you place $M$ in a single part of $K_{p}(t)$ then the resulting graph contains a graph from $\cL$. This has been useful in a variety of contexts.  We aim to find a suitable analogue of $\cM$ for directed graphs. Our first, naive attempt is as follows:
\begin{definition}
\label{M_def}
     Given a directed graph $\vH$, let $\cM(\vH)$ denote the family directed graphs $\vM$ whose underlying graph $M$ is in $\cM(H)$ and such that for some $t\geq 1$, the following hold, where $M'$ is $M$ adjoined to $t$ isolated vertices: There exists an orientation $\vG_M$ of $G_M = M'\otimes K_{p-1}(t)$, and a set $W$ satisfying $V(M)\subseteq W\subseteq V(G_M)$ such that $\vH\subseteq \vG_M[W]$ and $\vG_M[M]\cong\vM$.
\end{definition}
Note that for every $M\in \mathcal{M}(H)$, $\mathcal{M}(\vH)$ contains at least one orientation $\vM$ of $M$.  Although Definition \ref{M_def} will prove useful in simplifying notation, we will need something stronger to prove our results:

Let $\cH$ be a family of graphs.  We denote by $\overrightarrow{\cH}$ the family of all orientations of graphs $H\in \cH$.
\begin{definition}
\label{M'_def}
    Let $\cM'(\vH) = \{M\in \cM(H): \overrightarrow{\{M\}}\subseteq \cM(\vH)\}$. That is, $\cM'(\vH)$ is the set of graphs $M\in \cM(H)$ such that every orientation of $M$ is in $\cM(\vH)$.
\end{definition}

Notice that $\cM(\vH)$ is a family of directed graphs. In contrast, $\cM'(\vH)$ is a family of (undirected) graphs. Indeed, $\cM'(\vH)\subseteq \cM(H)$, and $\cM(\vH)\subseteq \overrightarrow{\cM(H)}$.  Our main theorem will make use of the family $\cM'(\vH)$.

Before stating our results, we need some more terminology. A \textit{$(t+1)$-star}, denoted $S_{t+1}$, is the graph with a single center vertex $v$ and $t$ other vertices, all adjacent only to $v$. We call an orientation $\vS_{t+1}$ of a star $S_{t+1}$ \textit{pure} if all of its edges are oriented towards its center vertex or all of its edges are oriented away from its center vertex. Note $S_{t+1}$ has exactly two pure orientations. We now state our first main result.

\begin{theorem}
\label{general_graph_theorem}
    Let $H$ be a graph with chromatic number $\chi(H) = p+1 \geq 3$, and suppose there is a $(t+1)$-star $S_{t+1}$ in $\cM(H)$. Fix an orientation $\vH$ of $H$ and suppose $\cM(\vH)$ contains both pure orientations of $S_{t+1}$. Then for large enough $n$,
    $$2^{\ex(n, H)}\leq D(n,\vH)\leq 2^{t(n,p)+p\ex(n,\cM'(\vH))}.$$
\end{theorem}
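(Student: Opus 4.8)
The lower bound is the one recorded in the introduction: $T_{n,p}$ is $H$-free (as $\chi(H)=p+1>p$), so all $2^{t(n,p)}$ of its orientations are $\vec H$-free, and taking $G$ extremal for $H$ gives $D(n,\vec H)\ge 2^{\ex(n,H)}$. For the upper bound, fix $G$ on $n$ vertices; since $D(n,\vec H)\ge 2^{t(n,p)}$ it suffices to bound $D(G,\vec H)$ when $D(G,\vec H)\ge 2^{t(n,p)}$. The engine of the proof is the following structural statement: \emph{there is a partition $V(G)=V_1\cup\dots\cup V_p$ such that $G[V_i]$ is $\mathcal M'(\vec H)$-free for every $i$}. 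Granting it, the theorem follows at once: the edges of $G$ running between parts form a $p$-partite graph on $n$ vertices, hence number at most $t(n,p)$, while $e(G[V_i])\le\ex(|V_i|,\mathcal M'(\vec H))\le\ex(n,\mathcal M'(\vec H))$ since the Tur\'an number is monotone (members of $\mathcal M'(\vec H)\subseteq\mathcal M(H)$ have no isolated vertices); summing gives $e(G)\le t(n,p)+p\,\ex(n,\mathcal M'(\vec H))$, and $D(G,\vec H)\le 2^{e(G)}$.

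To prove the structural statement I would take a partition $V_1\cup\dots\cup V_p$ minimising $\sum_i e(G[V_i])$; if every $G[V_i]$ is $\mathcal M'(\vec H)$-free we are done, so suppose $G[V_i]$ contains a copy $M_*$ of some $M\in\mathcal M'(\vec H)$, and aim for the contradiction $D(G,\vec H)<2^{t(n,p)}$. Since $D(G,\vec H)\ge 2^{t(n,p)}=2^{\ex(n,H)-o(n^2)}$ (by Erd\H{o}s--Stone) while the asymptotic theorem of Buci\'c--Janzer--Sudakov gives $D(G,\vec H)\le 2^{\ex(n,H)+o(n^2)}$, a stability argument (the counting analogue of Erd\H{o}s--Simonovits stability, obtainable by the same container/supersaturation method that gives the asymptotic count) shows that $G$ lies within $o(n^2)$ edges of $T_{n,p}$; in particular the chosen partition has $\sum_i e(G[V_i])=o(n^2)$, roughly equal parts, all but $o(n^2)$ cross-pairs present, and all but $o(n)$ vertices \emph{good}, meaning adjacent to all but $o(n)$ vertices of every other part. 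Now expose an orientation of $G$ in two rounds — first the orientation $\sigma$ of the cross-edges, then the orientation $\tau$ of the remaining edge set $E_1$ — so that $D(G,\vec H)=\sum_\sigma N(\sigma)=2^{e_{\mathrm{cross}}}\mathbb E_\sigma[N(\sigma)]\le 2^{t(n,p)}\mathbb E_\sigma[N(\sigma)]$, where $N(\sigma)$ is the number of $\vec H$-free completions. It remains to prove $\mathbb E_\sigma[N(\sigma)]<1$, equivalently that a uniformly random orientation of $G$ contains $\vec H$ with probability greater than $1-2^{-|E_1|}$.

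This is the heart of the matter, and where I expect the real difficulty. The idea is a planting argument that exploits the \emph{whole} structure $\mathcal M'(\vec H)$ encodes. However $\tau$ orients $M_*$, the result is an orientation of a graph in $\mathcal M'(\vec H)$ and hence lies in $\mathcal M(\vec H)$ — this is exactly why $\mathcal M'$ rather than $\mathcal M$ is the right notion — so by Definition \ref{M_def} there is an orientation $\vec G_M$ of $M'\otimes K_{p-1}(t)$, with $t$ a constant (uniform over the finitely many orientations of $M$), that contains $\vec H$ and induces this orientation on $M$. Using the good vertices one embeds into $G$ a family of $\Theta(n)$ pairwise edge-disjoint copies of the underlying graph $M'\otimes K_{p-1}(t)$, all sharing $M_*$ but otherwise on disjoint vertex sets drawn from $V_i$ and the other parts; on each copy the random $\sigma$ realises the cross-edge pattern of $\vec G_M$ with some constant probability $q_0=q_0(\vec H)>0$, and, being supported on pairwise disjoint edge sets, these events are independent, whence a random orientation avoids $\vec H$ with probability at most $(1-q_0)^{\Theta(n)}$. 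This saves only a factor $2^{-\Theta(n)}$, while $|E_1|$ could a priori be as large as $o(n^2)$; closing that gap is precisely where the pure-star hypothesis enters. A good vertex of $V_i$ with at least $2t-1$ good neighbours inside $V_i$ has $\ge t$ of them joined to it in a common direction, hence creates a pure $\vec S_{t+1}$ centred in $V_i$, and since both pure orientations of $S_{t+1}$ lie in $\mathcal M(\vec H)$ the same planting argument (now with $M=S_{t+1}$) applies. This reduces to the case where every good vertex has at most $2t-2$ good within-part neighbours, so the within-part edges incident to good vertices number only $O(n)$; an iterative deletion of the $o(n)$ exceptional vertices then brings $|E_1|$ down to $O(n)$ outright, after which $2^{|E_1|}(1-q_0)^{\Theta(n)}<1$ for large $n$ and the contradiction is complete. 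Making the degree reduction and the exceptional-vertex cleanup compatible with the \emph{exact} target bound — rather than merely an asymptotic one — is where the bulk of the technical work sits; the remaining ingredients (supersaturation, the Tur\'an-type edge counts) are routine.
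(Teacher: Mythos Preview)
Your overall plan is to prove a structural statement --- that in the optimal $p$-partition every $G[V_i]$ is $\mathcal{M}'(\vec H)$-free --- and then read off the edge bound. The deduction of the theorem from the structural statement is fine, and the structural statement is indeed what the paper eventually proves as Lemma~\ref{parts_lem}. But the paper only proves it under the extra hypothesis of Theorem~\ref{general_gph_finite_thm} (that $\ex(n,\mathcal{M}'(\vec H))$ is constant), and crucially its proof \emph{uses} the conclusion of Theorem~\ref{general_gph_finite_thm}, which in turn rests on Lemma~\ref{general_thm_lem}, the engine behind Theorem~\ref{general_graph_theorem}. So you are trying to short-circuit the paper's entire machinery, and the place where this bites is exactly the step you flag as ``where the bulk of the technical work sits'': your planting argument yields $\Pr[\vec H\text{-free}]\le 2^{-\Theta(n)}$, whereas you need $\le 2^{-|E_1|}$ with $|E_1|$ only known to be $o(n^2)$. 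The pure-star observation (a good vertex of within-part degree $\ge 2t-1$ carries a pure $\vec S_{t+1}$ in every orientation) gives \emph{more} planting sites, but each such site still only contributes a factor $2^{-\Theta(n)}$ and the sites are not edge-disjoint across vertices, so you do not get $2^{-\omega(n)}$ this way. Your fallback --- ``iteratively delete the $o(n)$ exceptional vertices to bring $|E_1|$ down to $O(n)$'' --- changes $G$, and you never explain how a bound on the \emph{new} graph yields the contradiction $D(G,\vec H)<2^{t(n,p)}$ for the \emph{original} $G$. As written this is a genuine gap, not a routine cleanup.

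The paper takes a different route that sidesteps the $|E_1|$ versus $\Theta(n)$ mismatch entirely. Rather than a probability bound, Lemma~\ref{general_thm_lem} is a \emph{counting} statement about relevant orientations: in any relevant $\vec H$-free orientation of $G$, either (Case~1) some vertex $v$ with many neighbours in all parts must have $(v,V_i)$ almost pure toward at least two parts --- this is where the pure-star hypothesis is actually used, via an embedding lemma for relevant orientations --- or (Case~2) some vertex has low total degree, or (Case~3) the edges from the copy of $M$ into some other part have a forbidden ``profile''. In each case the edges incident to the designated vertex set can be oriented in at most $2^{cn}$ ways with $c<1-\tfrac1p$, so deleting those $q$ vertices increases the excess in the exponent: $D(G\setminus\{v_1,\dots,v_q\},\vec H)\ge 2^{t(n-q,p)+m+a}$. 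The proof of Theorem~\ref{general_graph_theorem} then iterates this: as long as the excess exceeds $p\,\ex(n,\mathcal{M}'(\vec H))$, some part contains an $M\in\mathcal{M}'(\vec H)$, the lemma applies, and the excess grows by a fixed constant per round until it exceeds $\binom{\ell}{2}$ on an $\ell$-vertex graph, a contradiction. This amortised vertex-deletion is the idea you are missing; it is what lets the argument work without ever controlling $|E_1|$ beyond $o(n^2)$.
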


Note that if $H$ is an edge critical graph, then for any orientation $\vH$ of $H$, $\cM'(\vH) = \cM(H)$ which is a single edge. Thus we immediately obtain the result from \cite{bucic2023} that $D(n, \vH) = 2^{\ex(n, H)}$. 

The proof of Theorem \ref{general_graph_theorem}, gives no information about the structure of a graph $G$ maximizing $D(n, \vH)$ for a directed graph $\vH$. Our next theorem shows that in the case where $\ex(\cM'(\vH))$ is a constant value, for large enought $n$, one can change a constant number of edges in $G$ to obtain a complete $p$-partite graph.

\begin{theorem}
\label{general_gph_finite_thm}
    Let $H$ be a graph with chromatic number $\chi(H) = p+1 \geq 3$ and let $\vH$ be an orientation of $H$. Suppose that there is a $(t+1)$-star $S_{t+1}$ in $ \cM(H)$ and that $\cM(\vH)$ contains both pure orientations of $S_{t+1}$. Suppose further that there is a constant $a$ such that for large enough $n$, $\ex(n, \cM'(\vH))= a$. Let $G$ be a graph that maximizes $D(G, \vH)$. If $V(G) = V_1\cup \hdots \cup V_p$ is an optimal $p$-partition of $V(G)$, then 
    $$\sum_{i=1}^p|E(V_i)|\leq pa.$$
\end{theorem}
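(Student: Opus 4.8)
The plan is to start from the upper bound already established in Theorem \ref{general_graph_theorem}, namely $D(G,\vH)\le 2^{t(n,p)+p\,\ex(n,\cM'(\vH))}$, and combine it with a lower bound obtained by counting $\vH$-free orientations of $G$ directly in terms of an optimal $p$-partition $V(G)=V_1\cup\dots\cup V_p$. Write $e_{\mathrm{cross}}$ for the number of edges of $G$ between distinct parts and $e_{\mathrm{in}}=\sum_{i=1}^p|E(V_i)|$ for the number of edges inside the parts, so $|E(G)|=e_{\mathrm{cross}}+e_{\mathrm{in}}$. Since the partition is optimal we have $e_{\mathrm{cross}}\le t(n,p)$ (the Tur\'an bound), and more usefully $t(n,p)\le\binom{n}{2}-e_{\mathrm{in}}$ is \emph{not} quite what I want; instead I will use that removing all the edges inside parts leaves a $p$-partite graph, so $D(G,\vH)\ge 2^{e_{\mathrm{cross}}}\cdot(\text{something for the inside edges})$. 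The first key step is therefore to produce a clean lower bound $D(G,\vH)\ge 2^{|E(G)|}\cdot c^{-e_{\mathrm{in}}}$ or, better, $D(G,\vH)\ge 2^{e_{\mathrm{cross}}}$ is too weak — I need to retain a positive fraction of the orientations of the inside edges as well. The natural move: among the $2^{e_{\mathrm{in}}}$ orientations of the inside edges, a constant fraction are "safe" because $\cM'(\vH)$-freeness inside each part is automatic once $G[V_i]$ is itself $\cM'(\vH)$-free, but in general $G[V_i]$ need not be. So the honest lower bound I expect to use is simply $D(G,\vH)\ge 2^{|E(G)|-C}$ for an appropriate constant $C$ only when $G[V_i]$ is close to $\cM'(\vH)$-free; this is where the hypothesis $\ex(n,\cM'(\vH))=a$ enters.

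Concretely, here is the comparison I want to run. By maximality of $G$, $D(G,\vH)\ge D(T_p(n),\vH)=2^{t(n,p)}$ (the Tur\'an graph is $\vH$-free in every orientation since its underlying graph has chromatic number $p<p+1=\chi(H)$, so it omits $H$ hence $\vH$). On the other hand, the upper bound from Theorem \ref{general_graph_theorem} gives $D(G,\vH)\le 2^{t(n,p)+pa}$. So we already know $D(G,\vH)$ is pinned to within a factor $2^{pa}$ of $2^{t(n,p)}$. The point is to feed the structure of $G$ itself into a \emph{lower} bound and see that it forces $e_{\mathrm{in}}\le pa$. For this, partition the edges of $G$ as above and consider orienting: (i) all cross edges arbitrarily — $2^{e_{\mathrm{cross}}}$ choices; (ii) inside each $V_i$, orient the edges of $G[V_i]$. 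The claim I need is that at least one orientation of the cross edges has the property that at least a $2^{-O(a)}$ fraction of the $2^{e_{\mathrm{in}}}$ inside-orientations extend it to an $\vH$-free orientation of $G$; equivalently, averaging, $D(G,\vH)\ge 2^{e_{\mathrm{cross}}}\cdot 2^{e_{\mathrm{in}}}\cdot 2^{-O(a)}=2^{|E(G)|-O(a)}$. Granting that, combine with $D(G,\vH)\le 2^{t(n,p)+pa}$ and with $e_{\mathrm{cross}}\le t(n,p)$ to get $|E(G)|-O(a)\le t(n,p)+pa$, i.e. $e_{\mathrm{in}}=|E(G)|-e_{\mathrm{cross}}\le t(n,p)+pa+O(a)-e_{\mathrm{cross}}$, which is not yet $pa$ unless $e_{\mathrm{cross}}$ is essentially $t(n,p)$.

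So the real structure of the argument must be a two-sided squeeze that is tighter than the crude bounds above: I expect the proof to revisit the \emph{proof} of Theorem \ref{general_graph_theorem} rather than just its statement. In that proof, the upper bound $2^{t(n,p)+p\ex(n,\cM'(\vH))}$ should be obtained by a stability/entropy argument that, given an $\vH$-free orientation of $G$, identifies a near-optimal $p$-partition and shows that inside each part the underlying graph of the "dangerous" edges has at most $\ex(n,\cM'(\vH))$ edges, while across parts essentially everything is allowed. Re-running that argument for the extremal $G$ with its optimal partition $V_1\cup\dots\cup V_p$: almost every $\vH$-free orientation of $G$ must avoid having any member of $\cM'(\vH)$ appear inside a single $V_i$ in its underlying graph — because such a configuration, sitting inside one part of a (near-)complete $p$-partite-like structure, creates a copy of $H$ and hence, by the pure-star hypothesis and Definition \ref{M'_def}, a copy of $\vH$ in \emph{every} orientation of those inside edges, killing a constant fraction of orientations. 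Hence if some $G[V_i]$ had more than $\ex(n,\cM'(\vH))=a$ edges it would contain a member of $\cM'(\vH)$, forcing $D(G,\vH)$ strictly below $2^{t(n,p)+pa}$; summing the bound $|E(V_i)|\le a$ over $i$ gives $\sum_i|E(V_i)|\le pa$.

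The main obstacle, as I see it, is handling the interaction between parts: a member of $\cM'(\vH)$ inside $V_i$ only guarantees a copy of $H$ (and then $\vH$) if the cross edges to the other parts behave like the complete multipartite graph, i.e. if $V_j$ is large enough and $G$ contains enough of $K_{p-1}(t)$ hanging off the relevant vertices. For the extremal $G$ this should hold for all but a bounded number of vertices (by a Tur\'an-type stability argument, since $e_{\mathrm{cross}}$ is within $pa$ of $t(n,p)$), so only boundedly many vertices are "bad", and one absorbs them into the constant. Pinning down that the number of bad vertices is genuinely bounded — and that removing them does not destroy the potential $\cM'(\vH)$-copy — is the delicate part, and I would expect the author to quote a Tur\'an stability lemma (e.g. that a graph on $n$ vertices with $\ge t(n,p)-O(1)$ edges and no $K_{p+1}$ differs from $T_p(n)$ in $O(1)$ edges) and then argue vertex-by-vertex. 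The remaining steps — that every orientation of a graph in $\cM'(\vH)$ placed in part $i$ of the resulting near-complete multipartite graph yields $\vH$ (this is exactly Definitions \ref{M_def}--\ref{M'_def} together with the pure-star hypothesis ensuring the "gadget" stars are realizable), and that a constant fraction of orientations is thereby lost — are bookkeeping once the multipartite skeleton is in place.
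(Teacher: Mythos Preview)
Your approach is genuinely different from the paper's, and it has a real gap.

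\textbf{What the paper does.} The paper does \emph{not} use Theorem \ref{general_graph_theorem} as a black box. Both Theorem \ref{general_graph_theorem} and Theorem \ref{general_gph_finite_thm} are proved from the same vertex-deletion lemma (Lemma \ref{general_thm_lem}): if $G$ has $2^{t(n,p)+m}$ many $\vH$-free orientations with $m>0$ and some $G[V_i]$ contains an $M\in\cM'(\vH)$, then one can delete a bounded set of vertices and obtain a graph on $n-q$ vertices with at least $2^{t(n-q,p)+m+a}$ many $\vH$-free orientations, for any prescribed constant $a$. To prove Theorem \ref{general_gph_finite_thm}, the paper assumes $\sum_i|E(V_i)|>pa$, finds such an $M$, applies Lemma \ref{general_thm_lem} once with $a$ replaced by $p\,\ex(n,\cM'(\vH))+2$, and then iterates exactly as in the proof of Theorem \ref{general_graph_theorem} until the graph has more edges than $\binom{l}{2}$, a contradiction. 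No stability-to-$O(1)$-edges step is needed: the deletion lemma itself handles low-degree vertices (its Case 2) and vertices with too many neighbours in their own part (its Case 1) in every round.

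\textbf{Where your argument breaks.} Your plan hinges on the claim that for the extremal $G$, ``$e_{\mathrm{cross}}$ is within $pa$ of $t(n,p)$'', equivalently that the number of cross edges missing (and the number of ``bad'' vertices) is bounded by an absolute constant. But the only stability available \emph{before} Theorem \ref{general_gph_finite_thm} is Lemma \ref{main_lemma}, which gives $\sum_i|E(V_i)|\le \delta n^2$; this yields $e_{\mathrm{cross}}\ge t(n,p)-\delta n^2$, not $t(n,p)-O(1)$. With $\delta n^2$ missing cross edges you cannot guarantee that the vertices of a fixed $M\subseteq G[V_i]$ have a large common neighbourhood in each other part, so the probabilistic ``every orientation of $M$ extends to $\vH$'' argument need not apply. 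In fact, the argument you sketch is exactly the proof of Lemma \ref{parts_lem} in the paper, which establishes the stronger conclusion that each $G[V_i]$ is $\cM'(\vH)$-free; but that lemma explicitly \emph{invokes} Theorem \ref{general_gph_finite_thm} to get the ``at most $pa$ missing cross edges'' input. Using it to prove Theorem \ref{general_gph_finite_thm} is circular.

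A secondary issue: the sentence ``forcing $D(G,\vH)$ strictly below $2^{t(n,p)+pa}$'' is not a contradiction, since $2^{t(n,p)+pa}$ is an upper bound, not a lower bound. The contradiction you need is $D(G,\vH)<2^{\ex(n,H)}$ (or at least $<2^{t(n,p)}$), and obtaining that requires exactly the quantitative control on missing edges that you do not yet have.
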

In particular, this immediately implies the following corollary.
\begin{corollary}
    Let $H$ be an edge critical graph with chromatic number $\chi(H) = p+1\geq 3$, and let $\vH$ be any orientation of $H$. Then for large enough $n$,
    $$D(n, \vH) = 2^{\ex(n, H)} = 2^{t(n, p)}.$$
    Furthermore, $T(n, p)$ is the unique graph attaining this bound.
\end{corollary}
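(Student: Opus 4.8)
The plan is to derive the corollary from Theorems~\ref{general_graph_theorem} and~\ref{general_gph_finite_thm} by verifying their hypotheses with the decomposition family made as small as possible. The first step is to recall (with a one-line proof) the classical fact that $\cM(H)=\{K_2\}$ when $H$ is edge-critical with $\chi(H)=p+1$: fixing an edge $e=xy$ of $H$, a proper $p$-colouring of $H-e$ must place $x$ and $y$ in a common colour class, so mapping that class into the augmented part of $M'\otimes K_{p-1}(t)$ with $x,y$ sent to the two vertices of $K_2$ and the remaining classes into the other $p-1$ parts realises a copy of $H$ as soon as $t\ge|V(H)|$; conversely no edgeless graph can lie in $\cM(H)$, since then $M'\otimes K_{p-1}(t)$ would be $p$-partite and hence $H$-free. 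Thus $\cM(H)=\{K_2\}=\{S_2\}$, a $(1+1)$-star, so the star hypotheses of both theorems reduce to statements about the two orientations of a single edge.

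Second, I would check that $\cM(\vH)$ contains both pure orientations of $S_2$, i.e.\ both orientations of the edge $uv=K_2$, and consequently that $\cM'(\vH)=\{K_2\}$. Orient the special edge of $G_M=M'\otimes K_{p-1}(t)$ (with $t\ge|V(H)|$) as $u\to v$; writing the edge of $\vH$ as the arc $x\to y$, use the colouring above to embed $\vH$ with $x\mapsto u$ and $y\mapsto v$, noting that every other edge of $H$ joins two distinct colour classes and therefore maps to a distinct cross-edge of $G_M$, which we orient to agree with $\vH$ (and we orient the rest of $G_M$ arbitrarily). Taking $W$ to be the image of $V(H)$ gives $\vH\subseteq\vG_M[W]$, $V(M)\subseteq W$, and $\vG_M[M]\cong(u\to v)$, so $(u\to v)\in\cM(\vH)$; reversing the roles of $x$ and $y$ in the embedding yields $(v\to u)\in\cM(\vH)$. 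Hence $\overrightarrow{\{K_2\}}\subseteq\cM(\vH)$, so $\cM'(\vH)=\cM(H)=\{K_2\}$ and $\ex(n,\cM'(\vH))=\ex(n,K_2)=0$; we may take the constant $a$ of Theorem~\ref{general_gph_finite_thm} to be $0$.

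Now Theorem~\ref{general_graph_theorem} gives, for large $n$, $2^{\ex(n,H)}\le D(n,\vH)\le 2^{t(n,p)+p\cdot 0}=2^{t(n,p)}$. Since $T(n,p)$ is $p$-colourable while $\chi(H)=p+1$, it is $H$-free, so $\ex(n,H)\ge t(n,p)$; combined with the chain above this forces $D(n,\vH)=2^{t(n,p)}=2^{\ex(n,H)}$, the first assertion (and incidentally recovers the equality $\ex(n,H)=t(n,p)$ in this range). For the uniqueness statement, let $G$ maximise $D(G,\vH)$ and let $V(G)=V_1\cup\dots\cup V_p$ be an optimal $p$-partition; Theorem~\ref{general_gph_finite_thm} with $a=0$ forces $\sum_{i=1}^p|E(V_i)|\le 0$, so $G$ is $p$-partite, hence $p$-colourable, hence $H$-free, so every orientation of $G$ avoids $\vH$ and $D(G,\vH)=2^{|E(G)|}$. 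Equating this with $2^{t(n,p)}$ gives $|E(G)|=t(n,p)$, and a $p$-partite graph on $n$ vertices with that many edges must be complete $p$-partite with balanced parts, i.e.\ $G\cong T(n,p)$; conversely $D(T(n,p),\vH)=2^{|E(T(n,p))|}=2^{t(n,p)}=D(n,\vH)$, so $T(n,p)$ is the unique maximiser.

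Essentially all of this is immediate once the definitions are unpacked; the only step requiring any care is the second one, namely arranging the embedding of $\vH$ so that the induced arc on $V(K_2)$ can be made to point either way, which is what pins down $\cM'(\vH)=\cM(H)$ rather than just a proper subfamily. Once both orientations of the edge lie in $\cM(\vH)$, the two theorems do the rest, and the observation that $p$-partite graphs are $H$-free closes both the value and the uniqueness claims.
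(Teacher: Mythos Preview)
Your proof is correct and follows precisely the approach the paper sketches: verifying that for edge-critical $H$ one has $\cM(H)=\{K_2\}=\{S_2\}$ and that both (isomorphic) orientations of this edge lie in $\cM(\vH)$, so that $\cM'(\vH)=\{K_2\}$ and $\ex(n,\cM'(\vH))=0$, after which Theorems~\ref{general_graph_theorem} and~\ref{general_gph_finite_thm} with $a=0$ give the value and force any maximiser to be $p$-partite with $t(n,p)$ edges, hence $T(n,p)$. The paper states this corollary as an immediate consequence without spelling out the verification of $\cM'(\vH)=\{K_2\}$; you have simply filled in those details.
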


In Section 4 we give an application of Theorem \ref{general_gph_finite_thm}.  In particular, we will see that Theorems \ref{general_graph_theorem} and \ref{general_gph_finite_thm} are especially interesting when $\cM'(\vH) = \overrightarrow{\cM(H)}$. We define a $(k, r)$-\emph{fan}, denoted $F_{k, r}$ to be $k$ copies of $K_r$ all joined at a single vertex. We call the common vertex to all copies of $K_r$ the \textit{center} of $F_{k, r}$.  The following result about the extremal graphs for $(k, r)$-fans was shown for $r = 3$ by Erd\H{o}s, F{\"u}redi, Gould, and Gunderson \cite{Erds1995ExtremalGF} and for general $r$ by Chen \cite{CHEN2003159}.
\begin{theorem}(Chen \cite{CHEN2003159})
\label{extremal_thm_kfans}
    For every $k\geq 1$, $r\geq 2$ and for every $n\geq 16 k^3r^8$ if a graph $G$ on $n$ vertices has more than
    $$t(n, r-1)+\begin{cases}
    k^2-k & \text{ if $k$ is odd}\\
    k^2 -\frac{3}{2}k & \text{ if $k$ is even}
    \end{cases}$$
    edges, then $G$ contains a copy of a $(k, r)$-fan.  Furthermore, for any $n$, there exists a $(k, r)$-fan-free graph with this many edges.
    \end{theorem}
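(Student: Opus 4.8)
The plan is to prove the matching bounds $\ex(n,F_{k,r}) = t(n,r-1) + g(k)$ for all large $n$, where $g(k) = k^2-k$ for odd $k$ and $g(k) = k^2-\tfrac{3}{2}k$ for even $k$. The observation that drives everything is local: a copy of $F_{k,r}$ with centre $v$ in a graph $G$ is exactly a family of $k$ pairwise vertex-disjoint copies of $K_{r-1}$ inside $G[N(v)]$, so $G$ is $F_{k,r}$-free if and only if no neighbourhood $G[N(v)]$ contains $k$ disjoint copies of $K_{r-1}$. For the lower bound I would take the Tur\'an graph $T(n,r-1)$ with classes $V_1,\dots,V_{r-1}$ and place inside $V_1$ a graph $J$ with $\Delta(J)\le k-1$, $\nu(J)\le k-1$, and $e(J) = g(k)$: two disjoint copies of $K_k$ when $k$ is odd, and a graph on $2k-1$ vertices with one vertex of degree $k-2$ and the rest of degree $k-1$ (realizable by the Erd\H{o}s--Gallai criterion) when $k$ is even. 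A short case analysis on where a hypothetical centre $v$ could sit shows the result is $F_{k,r}$-free: if $v\notin V_1$, only $r-2$ classes are available to a $K_{r-1}$ in $G[N(v)]$, so each such clique uses an edge of $J$, and $k$ disjoint ones would force $\nu(J)\ge k$; if $v\in V_1$, each $K_{r-1}$ in $G[N(v)]$ uses a $J$-neighbour of $v$, so $k$ disjoint ones would force $\deg_J(v)\ge k$. This graph has $t(n,r-1)+g(k)$ edges, giving the lower bound; for the few small $n$ where $J$ does not fit in a class one takes a complete graph instead.

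For the upper bound I would use stability followed by cleaning. Let $G$ be $F_{k,r}$-free and suppose for contradiction $e(G) > t(n,r-1) + g(k)$. Since $\chi(F_{k,r}) = r$, the Erd\H{o}s--Stone theorem forces $e(G) = (1-o(1))t(n,r-1)$, so $G$ satisfies the hypothesis of the Erd\H{o}s--Simonovits stability theorem, which yields a partition $V(G) = V_1\cup\dots\cup V_{r-1}$ with only $o(n^2)$ non-edges across classes. I would then clean this partition using the local criterion: a vertex with many non-neighbours outside its class or many neighbours inside it can either be relocated to a class minimising the number of within-class edges, or serves as the centre of a forbidden $F_{k,r}$ --- since the other $r-2$ classes are huge and almost completely joined to $v$, one greedily routes $k$ vertex-disjoint copies of $K_{r-1}$ through $v$ inside $G[N(v)]$. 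Iterating drops the number of bad pairs to $O(1)$ per vertex, and then the same greedy routing shows that, writing $J_i = G[V_i]$, one must have $\Delta(J_i) + \sum_{\ell\ne i}\nu(J_\ell)\le k-1$ for every $i$, because a single vertex of $V_i$ already sees the matchings of all the $J_\ell$ with $\ell\ne i$ together with its own link in $J_i$. Since $\sum_{i<j}|V_i||V_j|\le t(n,r-1)$ for every partition into $r-1$ classes, this gives $e(G)\le t(n,r-1) + \sum_i e(J_i)$, and it remains to show $\sum_i e(J_i)\le g(k)$.

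For that last step I would invoke the Chv\'atal--Hanson theorem: a graph with maximum degree at most $d$ and matching number at most $s$ has at most $sd + \lfloor d/2\rfloor\,\lfloor s/\lceil d/2\rceil\rfloor$ edges, and one checks this equals $g(k)$ exactly when $d=s=k-1$. Applying this to each $J_i$ with $s = \nu(J_i)$ and $d = k-1-\sum_{\ell\ne i}\nu(J_\ell)$, and using superadditivity of the floor function, a short optimisation over the vector $(\nu(J_1),\dots,\nu(J_{r-1}))$ shows $\sum_i e(J_i)$ is largest when all the matching --- hence all the within-class edges --- concentrate in a single class with $\nu = \Delta = k-1$, which returns exactly $g(k)$. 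Together with $e(G)\le t(n,r-1)+\sum_i e(J_i)$ this contradicts $e(G) > t(n,r-1)+g(k)$ and completes the upper bound.

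The main difficulty is the cleaning step: upgrading the $o(n^2)$ error from the stability theorem to the $O(1)$-per-vertex error that the greedy-routing argument and the Chv\'atal--Hanson endgame need. One must show that every vertex with an atypical neighbourhood can be \emph{either} displayed as the centre of a forbidden $F_{k,r}$ \emph{or} moved so as to strictly decrease the number of within-class edges, and that iterating these two operations terminates in the bounded internal configuration analysed above. Carrying the constants through this argument precisely enough to recover the exact value of $g(k)$, rather than merely a bounded additive term, is exactly what forces a hypothesis of the form $n\ge 16k^3r^8$.
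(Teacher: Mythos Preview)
The paper does not prove this theorem at all; it is quoted from Chen~\cite{CHEN2003159} (extending the $r=3$ case of Erd\H{o}s, F\"uredi, Gould and Gunderson) and used only as a black box to feed into Corollary~\ref{kfans_cor} and the later exact results. There is therefore no proof in the present paper to compare your argument against.

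That said, your outline is essentially Chen's own argument. The lower-bound construction you give --- Tur\'an graph plus a graph $J$ with $\Delta(J),\nu(J)\le k-1$ and $e(J)=g(k)$ in one class --- is exactly the extremal example in~\cite{CHEN2003159}, and your case split on the location of the centre is the standard verification. For the upper bound, Chen likewise passes through Erd\H{o}s--Simonovits stability, cleans the partition so that cross-edges are essentially complete, derives the constraint $\Delta(J_i)+\sum_{\ell\ne i}\nu(J_\ell)\le k-1$ by greedily routing $K_{r-1}$'s through a vertex of maximum internal degree, and finishes with the Chv\'atal--Hanson theorem plus the optimisation that concentrates all internal edges in one class. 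Your identification of the cleaning step as the source of the explicit threshold $n\ge 16k^3r^8$ is also correct. The one place where a full write-up needs care is that the greedy routing requires the cross-edges incident to the vertices of $J_i$ and to the matchings in the $J_\ell$ to be genuinely present, not merely present up to $O(1)$ errors per vertex; Chen handles this by first isolating a bounded exceptional set and then arguing separately on it, and you should expect to do the same rather than rely on a uniform $O(1)$-per-vertex bound.
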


For ease of notation, from here on we denote 
$$h(k) =\begin{cases}
    k^2-k & \text{ if $k$ is odd}\\
    k^2 -\frac{3}{2}k & \text{ if $k$ is even}
    \end{cases} $$
so that Theorem \ref{extremal_thm_kfans} says
$$\ex(n,F_{k,r}) = t(n, r-1)+h(k).$$
One may deduce from results in \cite{CHEN2003159} that $\cM(F_{k, r}) = \{S_{k+1}, M_{k}\}$. That is, $\cM(F_{k, r})$ consists of a $(k+1)$-star and a matching of size $k$.  We define a specific family of orientations of $F_{k,r}$ that satisfy the conditions of Theorems \ref{general_graph_theorem}, and \ref{general_gph_finite_thm}:

\begin{definition}
    Let $\vF_{k,r}$ be an orientation of $F_{k, r}$ such that each copy of $K_r$ has at least one edge entering the center vertex and at least one edge exiting the center vertex. We call such an orientation \emph{anti-directed}.
\end{definition}

One can easily check that if $\vF_{k, r}$ is an anti-directed orientation of $F_{k, r}$,  then $\cM'(\vF_{k, r}) = \overrightarrow{\cM(F_{k, r})}$. Thus, combining Theorem \ref{general_graph_theorem} with the results in \cite{CHEN2003159}, we obtain the following corollary:
\begin{corollary}
    \label{kfans_cor}
     Let $k\geq 3, r\geq 2$ . Let $\vF_{k, r}$ be an anti-directed orientation of $F_{k, r}$. Then for large enough $n$, we have
    $$2^{t(n, r-1)+h(k)}\leq D(n,\vF_{k, r})\leq 2^{t(n, r-1)+(r-1)h(k)}.$$
\end{corollary}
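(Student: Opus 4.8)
The plan is to deduce Corollary~\ref{kfans_cor} directly from Theorem~\ref{general_graph_theorem}, once its hypotheses are checked for $H = F_{k, r}$ and the anti-directed orientation $\vF_{k, r}$, and once the two Tur\'an numbers that appear are evaluated. Since $F_{k, r}$ consists of $r$ copies of $K_r$ identified at a single vertex, $\chi(F_{k, r}) = r$, so in the notation of Theorem~\ref{general_graph_theorem} we have $p = r - 1$. When $r = 2$ there is no anti-directed orientation of $F_{k,2}$, because the single edge of each $K_2$ cannot both enter and leave the centre; hence the statement is vacuous and we may assume $r \geq 3$, so that $\chi(H) = p + 1 \geq 3$. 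By the identity $\cM(F_{k, r}) = \{S_{k+1}, M_k\}$ deduced from \cite{CHEN2003159}, the $(k+1)$-star lies in $\cM(F_{k, r})$, so we take $t = k$ in Theorem~\ref{general_graph_theorem}.

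The crux is to verify that $\cM(\vF_{k, r})$ contains both pure orientations of $S_{k+1}$; in fact I would establish the stronger assertion announced before the corollary, that $\cM'(\vF_{k, r}) = \cM(F_{k, r})$, i.e.\ that \emph{every} orientation of $S_{k+1}$ and of $M_k$ lies in $\cM(\vF_{k, r})$. Fix such an orientation $\vM$ with underlying graph $M \in \{S_{k+1}, M_k\}$ and use $t = k$ in Definition~\ref{M_def}: one must produce an orientation $\vG_M$ of $G_M = M' \otimes K_{p-1}(t) = M' \otimes K_{r-2}(k)$, where $M'$ is $M$ with $k$ isolated vertices adjoined, together with a set $W \supseteq V(M)$ such that $\vG_M[M] \cong \vM$ and $\vF_{k, r} \subseteq \vG_M[W]$. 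Take the centre $v$ of the desired copy of $F_{k, r}$ to be the star centre when $M = S_{k+1}$, and any vertex of $K_{r-2}(k)$ when $M = M_k$; extend the $i$-th leaf (resp.\ the $i$-th matching edge) to a copy of $K_r$ through $v$ by adjoining one fresh vertex from each of the remaining $r-2$ (resp.\ $r-3$) parts of $K_{r-2}(k)$, which is possible with pairwise-disjoint choices off $v$ because $t = k$. The only edges of $G_M$ whose directions are forced are those of $\vM$; inside each $K_r$-copy at most one edge at $v$ is forced, and since $r \geq 3$ there is at least one free edge at $v$ in that copy, so each $K_r$-copy can be oriented as an anti-directed tournament with vertex labelling consistent with $\vM$. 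Taking $W$ to be the union of these copies then gives $\vF_{k, r} \subseteq \vG_M[W]$ with $\vG_M[M] \cong \vM$. The only genuinely delicate part is this bookkeeping: matching the forced edge of $\vM$ to a correctly oriented edge of an anti-directed $K_r$ while keeping the copies disjoint away from $v$.

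With the hypotheses of Theorem~\ref{general_graph_theorem} in place (with $p = r-1$, the star $S_{k+1}$, and $\cM'(\vF_{k, r}) = \{S_{k+1}, M_k\}$), it yields, for $n$ large,
$$2^{\ex(n, F_{k, r})} \leq D(n, \vF_{k, r}) \leq 2^{t(n, r-1) + (r-1)\ex(n, \{S_{k+1}, M_k\})}.$$
For the lower bound, Theorem~\ref{extremal_thm_kfans} gives $\ex(n, F_{k, r}) = t(n, r-1) + h(k)$. For the upper bound one needs $\ex(n, \{S_{k+1}, M_k\}) = h(k)$: a graph is $\{S_{k+1}, M_k\}$-free precisely when its maximum degree is at most $k-1$ and its matching number is at most $k-1$, and the maximum number of edges of such a graph equals $h(k)$ --- this is exactly the constant placed inside a single part in Chen's extremal construction for $F_{k, r}$, and equivalently the Chv\'atal--Hanson bound on the number of edges of a graph with bounded maximum degree and bounded matching number; in particular it is independent of $n$ once $n$ is large. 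Substituting gives
$$2^{t(n, r-1) + h(k)} \leq D(n, \vF_{k, r}) \leq 2^{t(n, r-1) + (r-1)h(k)},$$
which is the assertion of Corollary~\ref{kfans_cor}. The main obstacle is the embedding argument of the second paragraph; everything else is assembling the cited inputs.
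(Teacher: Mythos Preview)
Your proposal is correct and follows exactly the paper's approach: verify that $\cM'(\vF_{k,r}) = \cM(F_{k,r}) = \{S_{k+1}, M_k\}$ and then invoke Theorem~\ref{general_graph_theorem}. The paper dismisses the verification with ``one can easily check,'' whereas you spell out the embedding of $\vF_{k,r}$ into $\vG_M$ in both the star and the matching cases, and you also make explicit that the constant in the exponent of the upper bound is $\ex(n,\{S_{k+1},M_k\}) = h(k)$ via the bounded-degree/bounded-matching extremal result; these are useful details that the paper leaves implicit.

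Two small slips worth fixing: $F_{k,r}$ is $k$ copies of $K_r$, not $r$ copies (your conclusion $\chi(F_{k,r})=r$ is nonetheless correct); and when you say each $K_r$-copy ``can be oriented as an anti-directed tournament,'' what you actually need (and what your argument delivers) is that it can be oriented to match the \emph{specific} tournament $\vK_r^j$ from the fixed $\vF_{k,r}$, with the forced edge of $\vM$ sent to an edge of $\vK_r^j$ of the correct direction --- which is possible precisely because $\vK_r^j$ is anti-directed at the centre.
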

We will then use Corollary \ref{kfans_cor} to prove exact results for some small cases of $k, r$:

\begin{theorem}
\label{k-fans_thm_1}
    Let $r\geq 2$ and let $\vF_{2, r}$ be an anti-directed orientation of $F_{2,r}$. Then for large enough $n$,
    $$D(n,\vF_{2, r}) =2^{\ex(n, F_{2, r})} = 2^{t(n, r-1)+1}.$$
\end{theorem}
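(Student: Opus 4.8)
The plan is to apply Theorems~\ref{general_graph_theorem} and~\ref{general_gph_finite_thm} to reduce the problem to a statement about a single extremal graph, and then to close the remaining gap (a factor $2^{r-2}$ in the exponent) by a supersaturation-style count. First, for $r=2$ the statement is vacuous: a single edge cannot both enter and leave its endpoint, so $F_{2,2}$ admits no anti-directed orientation. Assume $r\ge 3$. Then $\chi(F_{2,r})=r=p+1$ with $p=r-1$, and by~\cite{CHEN2003159} we have $\cM(F_{2,r})=\{S_3,M_2\}$, so $\cM(F_{2,r})$ contains the $3$-star $S_{t+1}$ with $t=2$; since $\vF_{2,r}$ is anti-directed one checks, as noted in the text, that $\cM(\vF_{2,r})$ contains both pure orientations of $S_3$ and that $\cM'(\vF_{2,r})=\{S_3,M_2\}$. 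A graph with no matching of size two and no vertex of degree two has at most one edge, so $\ex(n,\cM'(\vF_{2,r}))=1$ for every $n\ge 3$; thus the hypotheses of Theorems~\ref{general_graph_theorem} and~\ref{general_gph_finite_thm} hold with $a=1$. In particular, using $h(2)=1$ and Theorem~\ref{extremal_thm_kfans}, $D(n,\vF_{2,r})\ge 2^{\ex(n,F_{2,r})}=2^{t(n,r-1)+1}$, and it remains to prove the matching upper bound.

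Let $G$ maximise $D(G,\vF_{2,r})$ over $n$-vertex graphs, with $n$ large, and fix an optimal $(r-1)$-partition $V_1\cup\dots\cup V_{r-1}$ of $V(G)$. By Theorem~\ref{general_gph_finite_thm} the number $q$ of edges of $G$ lying inside the parts satisfies $q\le(r-1)a=r-1$, and since the multipartite part of $G$ has at most $t(n,r-1)$ edges we get $e(G)\le t(n,r-1)+q$. If some $|V_i|=o(n)$, or if $G$ omits at least $r-1$ of the edges between parts, then $e(G)\le t(n,r-1)$ and we are done since $D(G)\le 2^{e(G)}$; so we may assume every $|V_i|\ge\delta n$ for a constant $\delta=\delta(r)>0$ and that $G$ omits at most $r-2$ between-part edges. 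If $q\le 1$, then $G$ is $F_{2,r}$-free: any $K_r$ in $G$ contains two vertices of one part, hence the unique interior edge, so two copies of $K_r$ meet in at least two vertices and cannot form $F_{2,r}$; therefore $D(G)=2^{e(G)}\le 2^{\ex(n,F_{2,r})}=2^{t(n,r-1)+1}$. It thus suffices to show that $q\ge 2$ forces $D(G)<2^{t(n,r-1)+1}$, which would contradict $D(G)\ge 2^{t(n,r-1)+1}$.

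Fix two interior edges $e,e'$, and write $D(G,\vF_{2,r})=\sum_\sigma N(\sigma)$, the sum over the $2^q$ orientations $\sigma$ of the interior edges, where $N(\sigma)$ is the number of $\vF_{2,r}$-free orientations of $G$ extending $\sigma$; equivalently, $N(\sigma)$ equals $2^{e(G')}$ times the probability that a uniformly random orientation of the multipartite part $G'$ of $G$, together with $\sigma$, avoids $\vF_{2,r}$. I would show $N(\sigma)\le 2^{-\Omega(n)}2^{e(G')}$ for every $\sigma$. For this, exhibit $\Omega(n)$ copies of $F_{2,r}$ in $G$, each a pair of $K_r$'s — one through $e$, one through $e'$ — glued at a single vertex, with transversal vertices chosen so that the copies are pairwise disjoint away from the interior edges. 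This is possible because: if $e,e'$ lie in distinct parts and $r\ge 4$, the glue vertex may be any vertex of any of the $r-3\ge 1$ other parts; if $e,e'$ are disjoint and lie in one part, it may be any vertex of any of the $r-2\ge 1$ other parts; and if $e,e'$ share a vertex $x$ inside one part, the glue vertex must be $x$ but the two transversals can be varied over $\Omega(n)$ disjoint choices. Since $\vF_{2,r}$ is anti-directed, for any fixed $\sigma$ each such copy can be relabelled so as to realise $\vF_{2,r}$ (an interior edge missing the glue vertex maps to an edge of one of the two $(r-1)$-vertex subtournaments, which contain edges pointing either way relative to a relabelling; an interior edge meeting the glue vertex maps to an in- or out-edge at the center, which exists by anti-directedness). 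Hence each copy coincides with $\vF_{2,r}$ with probability at least $c_r:=2^{-(2\binom{r}{2}-2)}>0$ under the random orientation, and since the copies use disjoint random edges these events are independent, so the probability that $G'\cup\sigma$ avoids $\vF_{2,r}$ is at most $(1-c_r)^{\Omega(n)}=2^{-\Omega(n)}$. Thus $D(G)\le 2^q\cdot 2^{-\Omega(n)}2^{e(G')}=2^{-\Omega(n)}2^{e(G)}\le 2^{t(n,r-1)+r-1}\cdot 2^{-\Omega(n)}<2^{t(n,r-1)+1}$ for $n$ large. The sole remaining configuration — $r=3$ with $e,e'$ in distinct parts — admits no large family of edge-disjoint copies of $F_{2,3}$; there I would instead verify by hand that for each of the four orientations of $\{e,e'\}$, of the $16$ orientations of the four-edge bipartite graph joining the endpoints of $e$ to those of $e'$ only $6$ fail to complete a bowtie to $\vF_{2,3}$ (the others contribute a further $2^{-\Omega(n)}$), so that $N(\sigma)\le(\tfrac{3}{8}+o(1))2^{e(G')}$ and $D(G)\le 4(\tfrac{3}{8}+o(1))2^{e(G')}<2\cdot 2^{t(n,2)}=2^{t(n,2)+1}$.

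Putting this together, the maximiser satisfies $D(G)\le 2^{t(n,r-1)+1}$, so $D(n,\vF_{2,r})=2^{t(n,r-1)+1}=2^{\ex(n,F_{2,r})}$, as claimed. I expect the main obstacle to be the step $q\ge 2$: the ``$\Omega(n)$ edge-disjoint copies'' argument is clean in most configurations, but the case where $e$ and $e'$ share a vertex within a part — where all copies are forced to share the same glue vertex, so independence must be extracted from the transversals alone — and the exceptional case $r=3$ with $e,e'$ in distinct parts — where that argument degenerates and one is pushed into an exact enumeration over the small $K_{2,2}$ — need some care.
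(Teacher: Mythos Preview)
Your approach is sound and close in spirit to the paper's: both reduce, via the general machinery (Theorems~\ref{general_graph_theorem} and~\ref{general_gph_finite_thm}), to a graph with a bounded number of interior edges, and then argue that two or more interior edges already force too many orientations to contain $\vF_{2,r}$. The executions differ in two ways worth noting. First, the paper invokes Lemma~\ref{parts_lem} to obtain the sharper structural fact that \emph{each} part carries at most one interior edge; this forces any two interior edges to lie in distinct parts and eliminates your ``same part'' cases outright. Second, the paper runs a single uniform argument for all $r\ge 3$: it works inside the $K_4$ spanned by two interior edges, finds (with probability $5/8$) a suitable oriented triangle $\vT$, and then extends $\vT$ to $\vK_r^1$ and locates $\vK_r^2$ with high probability. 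Your route instead splits off $r\ge 4$ with a cleaner supersaturation argument (build $\Omega(n)$ edge-disjoint copies of $F_{2,r}$ through $e,e'$ with centre in a third part), reserving the $K_4$ hand-count only for the residual case $r=3$. For $r\ge 4$ your argument is arguably simpler than the paper's; the trade-off is the loss of uniformity.

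There is one genuine (but minor) gap in your $r=3$ endgame. You allow up to $r-2=1$ missing between-part edge, and that missing edge may lie inside the $K_4$ on $\{w_1,x_1,w_2,x_2\}$; then the ``$6$ out of $16$'' count no longer applies, since only three of the four cross-edges are present. The paper avoids this by arguing for contradiction from $e(G)>t(n,2)+1$, which (together with Lemma~\ref{parts_lem}) forces the number of missing cross-edges to be at most $\ell-2=0$, so the $K_4$ is automatically complete. In your framework the fix is equally easy: if $m\ge 1$ cross-edges are missing then $e(G)=e(G')+q\le t(n,2)-1+2=t(n,2)+1$, and since $G$ still contains an undirected copy of $F_{2,3}$ (e.g.\ the bowtie on $\{w_1,x_1,x_2\}\cup\{x_2,w_2,d\}$ for generic $d\in V_1$ survives the single deletion), at least one orientation of $G$ contains $\vF_{2,3}$, whence $D(G)<2^{e(G)}\le 2^{t(n,2)+1}$. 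With this patch, your proof goes through.
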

\begin{theorem}
    \label{k-fans_thm_2}
    Let $\vF_{3, 3}$ be an anti-directed orientation of $F_{3,3}$. Then for large enough $n$, 
    $$D(n, \vF_{3, 3}) = 2^{\ex(n, F_{3,3})}= 2^{\floor{\frac{n^2}{4}}+6}.$$
\end{theorem}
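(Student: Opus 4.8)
The plan is to start from Corollary \ref{kfans_cor} with $k=3$, $r=3$, which gives
$$2^{\lfloor n^2/4\rfloor + 6}\le D(n,\vF_{3,3})\le 2^{\lfloor n^2/4\rfloor + 12},$$
since $t(n,2)=\lfloor n^2/4\rfloor$ and $h(3)=6$. Here $\ex(n,\cM'(\vF_{3,3}))=\ex(n,\overrightarrow{\cM(F_{3,3})})$, and since $\cM(F_{3,3})=\{S_4,M_3\}$, for large $n$ this equals $h(3)=6$, a constant. So Theorem \ref{general_gph_finite_thm} applies with $a=6$: if $G$ maximizes $D(G,\vF_{3,3})$ and $V(G)=V_1\cup V_2$ is an optimal $2$-partition, then $|E(V_1)|+|E(V_2)|\le 12$. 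First I would reduce to understanding orientations of such a $G$: write $\vG=\vG[V_1]\cup\vG[V_2]\cup\vB$ where $\vB$ is the orientation of the bipartite part between $V_1$ and $V_2$. The key structural claim I would prove is that for all but a bounded number of the $\le 2^{12}$ choices of orientation on the at most $12$ interior edges, essentially \emph{every} orientation $\vB$ of the bipartite part is $\vF_{3,3}$-free — and that this forces $G$ to be exactly $T(n,2)=K_{\lfloor n/2\rfloor,\lceil n/2\rceil}$ with the right number (six) of extra edges arranged so as not to create too many fan-producing configurations, while simultaneously the count of bad bipartite orientations is small enough not to reduce the exponent.

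The heart of the argument is a careful local analysis of when a copy of $\vF_{3,3}$ appears. A $(3,3)$-fan has a center $c$ and three triangles through $c$; in an anti-directed orientation each triangle has one in-edge and one out-edge at $c$. I would split into cases according to where the center $c$ of a potential copy of $\vF_{3,3}$ lies and how many of its six non-center vertices lie in $V_1$ versus $V_2$. A triangle in $K_{\lfloor n/2\rfloor,\lceil n/2\rceil}$ plus a bounded set of interior edges must use at least one interior edge, so every triangle through $c$ uses an interior edge at or near $c$. Since only six interior edges are available, the triangles through a fixed center are confined to a bounded neighborhood, and the only way to get three edge-disjoint triangles at a common center is a very restricted configuration. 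I would enumerate these configurations (this is the place where the small values $k=3$, $r=3$ and the bound $pa=12$ keep the casework finite) and check that for the extremal $G$ — which I expect to be $K_{\lfloor n/2\rfloor,\lceil n/2\rceil}$ together with three independent edges inside each part (giving $h(3)=6$ extra edges, consistent with the lower-bound construction) — no orientation of the bipartite part completes an $\vF_{3,3}$, so that $D(G,\vF_{3,3})=2^{\lfloor n^2/4\rfloor+6}$ exactly.

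To finish, I would show that no graph does strictly better. Combining the upper bound $D(n,\vF_{3,3})\le 2^{\lfloor n^2/4\rfloor+12}$ with Theorem \ref{general_gph_finite_thm}, any optimal $G$ has at most $12$ interior edges and hence at most $\lfloor n^2/4\rfloor+12$ edges total; I would then argue that the number of \emph{non}-$\vF_{3,3}$-free orientations, summed over the bounded interior configurations, is by at least a factor $2^{6}$ smaller than $2^{\lfloor n^2/4\rfloor+12}$ unless the interior edge set is one of the "safe" configurations with exactly six edges, and that among those the maximum number of safe orientations is attained precisely by the described construction. The main obstacle I anticipate is the bookkeeping in this last step: one must show that having more than six interior edges, or six edges arranged badly, costs a large enough fraction of orientations (via a supersaturation-type argument: once a fan-forcing pattern of interior edges is present, a positive proportion — indeed a $(1-2^{-\Omega(1)})$-fraction after fixing a bounded number of bipartite edges — of all bipartite orientations contain $\vF_{3,3}$) to bring the total strictly below $2^{\lfloor n^2/4\rfloor+6}$. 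Handling the interaction between the choice of the (up to twelve) interior edges and the resulting loss in the bipartite orientation count — i.e.\ proving the trade-off always lands below the lower-bound value except for the claimed extremal configuration — is the delicate part; everything else is finite case-checking enabled by Theorem \ref{general_gph_finite_thm}.
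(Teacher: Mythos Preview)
Your high-level plan (use Corollary~\ref{kfans_cor} for the bounds, then Theorem~\ref{general_gph_finite_thm} to bound the interior edges, then do finite casework) matches the paper's, but there are two genuine gaps.

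First, your guessed extremal graph is wrong. You write that you ``expect [it] to be $K_{\lfloor n/2\rfloor,\lceil n/2\rceil}$ together with three independent edges inside each part.'' But three independent edges form a $3$-matching $M_3$, and $M_3\in\cM(F_{3,3})$: by the very definition of the decomposition family, placing $M_3$ in one side of a large complete bipartite graph \emph{creates} a copy of $F_{3,3}$. So your proposed graph is not even $F_{3,3}$-free, let alone extremal. The actual extremal graph (from Chen's theorem) is $T(n,2)$ with two vertex-disjoint triangles placed in a \emph{single} part; this has $h(3)=6$ interior edges, maximum degree~$2$ inside the part, and no $3$-matching.

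Second, you rely only on Theorem~\ref{general_gph_finite_thm}, which gives $|E(V_1)|+|E(V_2)|\le 12$, and propose to enumerate configurations of up to twelve interior edges. The paper does something sharper: it proves a separate lemma (Lemma~\ref{parts_lem}) showing that in an optimal $G$, \emph{each} part $V_i$ individually contains no member of $\cM'(\vF_{3,3})=\{S_4,M_3\}$. Hence each $G[V_i]$ has maximum degree~$2$ and no $3$-matching, so its components are paths $P_2,\dots,P_5$ or cycles $C_3,C_4,C_5$, and $|E(V_i)|\le 6$. This reduces each side to one of thirteen explicit shapes. The paper then dispatches every pair of shapes via two concrete probabilistic claims (one using three degree-$2$ vertices in one part together with a component in the other; the other using three edge-disjoint $P_3$'s across the parts), each showing that a random orientation contains $\vF_{3,3}$ with probability exceeding $1-2^{-c}$ for the relevant edge-surplus $c$. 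Your proposal gestures at a ``supersaturation-type argument'' but never identifies what local structure forces a positive fraction of orientations to contain $\vF_{3,3}$; without the per-part constraint from Lemma~\ref{parts_lem}, the casework you describe is not obviously finite in any useful sense, and the trade-off you allude to in the last paragraph is exactly the hard part that the paper's two claims resolve.
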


In Section 5 we discuss the remaining orientations of $F_{2,3}$ which we call a \emph{bowtie}.  In particular, we will see that there are some orientations $\vF_{2,3}$ such that $D(n, \vF_{2,3})> 2^{\ex(n, F_{2,3})}$:
\begin{proposition}
\label{not_semi_anti_directed_prop}
Let $\vB$ be an orientation of $B$ such that either:
\begin{enumerate}[label=(\alph*)]
    \item All edges incident to the center of $\vB$ are entering the center vertex or all edges incident to the center are exiting the center vertex.
    \item One triangle of $\vB$ has both edges entering the center and the other triangle has both edges exiting the center.
\end{enumerate}
Then $D(n, \vB)>2^{\ex(n, B)}$.
\end{proposition}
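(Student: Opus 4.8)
The plan is to exhibit, in each of the two cases, an explicit graph $G$ on $n$ vertices together with a family of orientations of $G$ that are all $\vB$-free, and to count that family to beat $2^{\ex(n,B)} = 2^{t(n,2)+1} = 2^{\lfloor n^2/4\rfloor + 1}$. The natural candidate for $G$ is the extremal graph for $B$, namely $T(n,2)$ with one extra edge added inside a part; this already has $\ex(n,B)$ edges, so we only need to show it has \emph{strictly more} than $2^{\ex(n,B)}$ orientations avoiding $\vB$, i.e. that not every orientation of this particular $G$ contains $\vB$ — in fact that a positive proportion avoid it. Equivalently, and more cleanly, I would look for a graph $G$ with $|E(G)| = \ex(n,B)$ such that $\Omega(2^{|E(G)|})$ of its orientations are $\vB$-free, or a graph with $\ex(n,B)+1$ edges a constant fraction of whose orientations are $\vB$-free.

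For case (a): $\vB$ has all four edges at the center oriented the same way (say all out). I would take $G = K_{\lceil n/2\rceil, \lfloor n/2\rfloor}$ plus one edge $e=xy$ inside the larger part $A$. A copy of $B$ in $G$ must use the edge $e$ (the only edge inside a part) as one of the two "outer" edges of a triangle, with the center of the $B$ being either $x$, $y$, or a vertex of the other part; one checks the only triangles are $\{x,y,b\}$ for $b$ in the other part $B$, so any copy of $B$ is centered at $x$ or at $y$ or at some $b$, using $e$ and four edges to $B$. Now orient $e$ arbitrarily and orient every edge at $x$ that goes to $B$ \emph{into} $x$; then no copy of $B$ can be centered at $x$ (it would need two out-edges at $x$) — wait, this over-constrains; instead the right move is: orient all edges incident to one fixed vertex $v$ of the part $B$ \emph{towards} $v$. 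Then $v$ has in-degree only, so $v$ cannot be the center of a (source- or sink-) star, and more importantly any triangle through $e$ and $v$ has $v$ as a sink-like vertex. A cleaner and safer choice: fix a vertex $v\in B$ and require every edge at $v$ to point toward $v$; this kills all copies of $B$ centered at $v$, but copies centered at $x,y$, or at $b\ne v$ survive, so this alone does not work. The correct approach is to note $B$-free is implied by $\{$all edges at the center same direction$\}$-free, so it suffices to kill all \emph{monochromatic} (pure) stars $S_3$; but every vertex of $G$ of degree $\ge 2$ would need a mixed orientation, which is impossible for a degree-$2$ vertex — so instead I should pick $G$ with minimum degree issues avoided. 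I will instead use: $G = T(n,2)+e$, orient $e$ freely, and among the $\lfloor n^2/4\rfloor$ cross edges, simply \emph{forbid one specific orientation pattern} on the (constantly many) edges meeting $\{x,y\}$ so that neither $x$ nor $y$ nor any common neighbor sees a pure $S_3$ on edges of a triangle containing $e$; since only $O(n)$ orientations are thereby excluded out of $2^{\lfloor n^2/4\rfloor}$ for the cross edges, at least $2\cdot 2^{\lfloor n^2/4\rfloor} - O(n) \cdot (\text{stuff}) > 2^{\lfloor n^2/4\rfloor+1}$ survive. Making this counting precise — identifying exactly which local patterns at $x,y$ and at common neighbors produce a copy of $\vB$, and checking the count of bad orientations is $o(2^{\lfloor n^2/4\rfloor})$ — is the routine but fiddly heart of case (a).

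For case (b): one triangle has both non-center edges entering the center, the other has both exiting. Here the obstruction is different: take $G = T(n,2)$ plus an edge $e=xy$ in part $A$, and consider orientations where the edge $e$ is oriented, say, $x\to y$, and \emph{all} cross edges at $x$ point out of $x$ while \emph{all} cross edges at $y$ point into $y$ (the remaining $\lfloor n^2/4\rfloor - O(n)$ cross edges free). In such an orientation, any triangle containing $e$ is $\{x,y,b\}$ with $x$ a source and $y$ a sink, so at the center $b$ we see one edge in (from $x$) and one edge out (to $y$) — never two-in or two-out on the pair $\{bx,by\}$. Hence no vertex $b$ is the center of a copy of $\vB$ via two $e$-triangles. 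One must still rule out $\vB$ centered at $x$ or $y$, but at $x$ all triangle-edges to $A^c$... actually the triangles at $x$ all pass through $y$, and there is only one such, so $x$ cannot be the center of $B$ (which needs two edge-disjoint triangles). Thus all $2^{\lfloor n^2/4\rfloor - O(n)}$ such orientations are $\vB$-free — this is far less than $2^{\lfloor n^2/4\rfloor+1}$, so this naive family is too small; the fix is to also let the $O(n)$ edges at $x,y$ vary over all patterns that keep "$x$ a source on its triangle edge, $y$ a sink", which restores a constant fraction, and additionally one may add a \emph{second} private edge $e'$ in part $A$ far from $e$ and play the same game, pushing the count to $2^{\lfloor n^2/4\rfloor + 1}$ or beyond. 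The main obstacle throughout is bookkeeping: pinning down precisely which orientations of the few non-free edges create a forbidden $\vB$ (accounting for copies of $B$ centered at $x$, at $y$, and at cross-vertices, and in case (b) the possibility that the two triangles of a $B$ are not both $e$-triangles), and then verifying the surviving family has size $> 2^{\lfloor n^2/4 \rfloor + 1}$. I expect that a careful choice — $G = T(n,2)$ with a bounded-size gadget inside one part, chosen per case — makes the count come out, but isolating that gadget and its admissible orientations is the real work.
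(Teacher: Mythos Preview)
Your proposal has a genuine gap: the choice of host graph $G$ cannot work as stated, and the fix you suggest provably fails.

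First, if $G$ is the extremal graph $T(n,2)+e$, then $G$ is $B$-free, so \emph{every} orientation of $G$ is $\vB$-free and $D(G,\vB)=2^{|E(G)|}=2^{\ex(n,B)}$ exactly. You can never get \emph{strictly} more from a $B$-free graph. You seem to sense this, and your suggested remedy is to add a second edge $e'=x'y'$ in the same part, ``far from $e$''. But this creates, for every vertex $b$ in the other part, a bowtie $\{x,y,b\}\cup\{x',y',b\}$ centred at $b$. In case (a), an orientation contains $\vB$ as soon as some $b$ has all four of $xb,yb,x'b,y'b$ oriented toward it; these events are independent across $b$, each with probability $1/16$, so the $\vB$-free fraction is $(15/16)^{\lfloor n/2\rfloor}\to 0$. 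Concretely $D(G,\vB)=4\cdot 15^{|B|}\cdot 2^{(|A|-4)|B|}$, and comparing with $2^{\ex(n,B)}=2\cdot 2^{|A||B|}$ gives a ratio of $2(15/16)^{|B|}\to 0$. Case (b) fails the same way with $7/8$ in place of $15/16$. So ``add a distant second edge'' moves you in the wrong direction.

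What you are missing is that the host graph must be chosen so that \emph{all} copies of $B$ share a single centre (or a bounded set of centres), so that $\vB$-freeness is governed by the orientation of only $O(n)$ edges while the remaining $\sim n^2/4$ edges are completely free. The paper takes $G=K_{1,\lfloor (n-1)/2\rfloor,\lceil (n-1)/2\rceil}$: every triangle, hence every bowtie, passes through the apex $v$, and a moment's thought shows every bowtie is centred at $v$. Then in case (a), $\vG$ is $\vB$-free iff at most one vertex of $V_1$ or at most one vertex of $V_2$ has its edge to $v$ oriented toward $v$; this already gives $\Theta(n\cdot 2^{n/2})$ good orientations of the $n-1$ apex edges, and the $\lfloor (n-1)^2/4\rfloor$ bipartite edges are free, yielding $D(G,\vB)=n(1-o(1))\,2^{n^2/4}\gg 2^{\ex(n,B)}$. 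Case (b) is similar. (Incidentally, $T(n,2)$ plus a path $xyz$ inside a part would also work, since every bowtie is then centred at $y$; but your ``disjoint $e,e'$'' does not.)
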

We then give the following example:
\begin{proposition}
    \label{wheel_prop}
    There exists a graph $H$ such that for any orientation $\vH$ of $H$
    $$D(n, \vH) > 2^{\ex(n, H)}.$$
\end{proposition}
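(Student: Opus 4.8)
The plan is to take $H$ to be a suitable wheel $W_m = K_1 \vee C_m$ (for instance $H=W_4=K_1\vee C_4$, which already illustrates the phenomenon) and to show that no orientation of it can be extremal. First I would record the relevant facts about $W_m$: one checks that $\cM(W_m)$ is generated by a single star (for $m=4$ one verifies $\cM(W_4)=\{S_3\}$ directly — a single edge planted in one side of $K_{t,t}$ does not create $W_4$, a path on three vertices does, and a matching never does — and in general the minimal star has about $m/2$ leaves), and crucially $\cM(W_m)$ contains no matching. Hence by the Simonovits-type description (Definition~\ref{original_m_def}) $\ex(n,W_m)=t(n,p)+\Theta(n)$, with the extremal graphs being $T(n,p)$ together with a maximum bounded-degree graph inserted into each part. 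In particular, to beat the bound one must exhibit, for each orientation $\vH$, a graph $G$ with $|E(G)|>\ex(n,H)$ that still has more than $2^{\ex(n,H)}$ orientations avoiding $\vH$; since the excess $\ex(n,H)-t(n,p)$ is only linear in $n$, such a $G$ cannot afford to have its $\Theta(n^2)$ copies of $H$ spread out, so the construction must force those copies to ``concentrate'' on few edges.

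Concretely, for a fixed $\vH$ I would build $G$ from $T(n,p)$ by augmenting the parts with graphs slightly denser than the bounded-degree graphs appearing in the extremal example — for $W_4$, a near-perfect matching in one part and, in the other, a matching together with a ``broom'' of $\Theta(n)$ pendant edges at a single vertex $x_0$ (or, in a variant, long paths). The two key steps are: (i) a localization lemma, proved from $\cM(W_m)=\{S_k,\dots\}$ together with the degree bounds away from $x_0$, to the effect that every copy of $W_m$ in $G$ must realise a copy of $S_k$ at $x_0$ (equivalently has its hub, or a designated rim vertex, at $x_0$); and (ii) an orientation of the edges at $x_0$, chosen according to $\vH$ — all toward $x_0$, or all away, or as the arcs of a directed path — so that the orientation of $S_k$ it forces is not among the boundedly many orientations of $S_k$ that actually occur as the ``$\cM(\vH)$-witness'' of a copy of $\vH$. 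When this succeeds, $\vG$ contains no copy of $\vH$ for \emph{any} orientation of the remaining edges; since only $O(n)$ edges are constrained while $|E(G)|$ exceeds $\ex(n,H)$ by the right amount, this yields $D(G,\vH)>2^{\ex(n,H)}$.

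The delicate point — and the reason the statement concerns \emph{all} orientations, in contrast to the bowtie, where only some orientations lie strictly above the trivial bound — is the case analysis behind step (ii). The realizable witness orientations of $S_k$ depend on how $S_k$ sits inside $W_m$ (as ``hub plus an alternating class of the rim'', and for small $m$ also as ``three consecutive rim vertices''), and for certain $\vH$ — roughly, those whose rim cycle is consistently oriented and whose spoke arcs at the hub are evenly split between the two rim-classes — every orientation of $S_k$ is realizable, so no single choice of orientation at $x_0$ kills all copies. For those I would fall back on a complementary construction, augmenting both parts and additionally orienting the edges across the bipartition of $T(n,p)$ in a ``one-directional'' fashion, so that the rim of any would-be copy of $\vH$ is forced to carry a source/sink (anti-directed) pattern incompatible with the directed rim of $\vH$. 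Proving that these constructions between them cover every orientation of the chosen wheel is the main obstacle, and it is precisely here that the specific combinatorics of the cycle $C_m$, as opposed to those of an arbitrary non-edge-critical graph, is used.
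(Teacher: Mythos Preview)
Your localization idea --- build $G$ just above the extremal threshold so that every copy of $H$ must use a designated $S_{k+1}$-witness at a single vertex, then orient there --- is the paper's as well, but your step~(ii) and its fallback both break on a case you have mis-identified. Take $\vW$ with spokes alternating in/out around the rim (for your $W_4$: $(v,x_1),(x_2,v),(v,x_3),(x_4,v)$). The two witness $P_3$'s, on $\{x_1,v,x_3\}$ and on $\{x_2,v,x_4\}$, are then precisely the all-out and the all-in star, so the \emph{only} orientation of $P_3$ missing from $\cM(\vW)$ is the directed path. At a broom centre $x_0$ with at least three pendant leaves there is no way to orient the pendant edges so that \emph{every} two of them form a directed path (two leaves receive the same direction and give a pure $P_3$). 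So your primary construction fails here --- and this is not your ``evenly split'' case, which is in fact the easy one: there both witnesses are directed paths and orienting all pendants the same way works. Your fallback then fails for a cruder reason: directing all $\Theta(n^2)$ cross-edges of $T(n,p)$ leaves only the $O(n)$ intra-part edges free, so you produce at most $2^{O(n)}\ll 2^{\ex(n,H)}$ orientations.

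The paper sidesteps this by going to a \emph{large} wheel and arguing probabilistically rather than deterministically. It takes $W_{2k+1}$ (rim $C_{2k}$) with $k$ even and large, and adds a single edge inside a small component of the $(k{-}1)$-almost-regular part of Yuan's extremal graph, so that $G'$ has a unique vertex $v$ of in-part degree $k$ and still no $C_{2k}$ in that part; hence every copy of $W_{2k+1}$ in $G'$ is centred at $v$ and uses the unique $S_{k+1}$ at $v$. The key is now a count: for any $\vW_{2k+1}$ there are only two witness orientations of $S_{k+1}$ (one per alternating spoke-class), and each is isomorphic to at most $\binom{k}{k/2}$ of the $2^k$ labelled orientations of the star at $v$. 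So a uniformly random orientation of $G'$ hits a witness with probability at most $2\binom{k}{k/2}/2^k<\tfrac12$ once $k$ is large, giving $D(G',\vW_{2k+1})>\tfrac12\cdot 2^{\ex(n,H)+1}=2^{\ex(n,H)}$. The obstruction you ran into for a fixed small wheel disappears in this limit, and it is this asymptotic slack --- not a second construction --- that handles every orientation at once.
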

In the remainder of this section we present some questions and open problems.

\subsection{Questions and open problems}
We begin with some conjectures about the unresolved orientations of $B = F_{2,3}$. First, we conjecture the value of $D(n, \vB)$ for the orientations mentioned in Proposition \ref{not_semi_anti_directed_prop}.

\begin{conjecture}
\label{all_bowties_conj}
    Let $G$ be a complete tripartite graph with part sizes $1, \floor{\frac{n-1}{2}}$, and $\ceiling{\frac{n-1}{2}}$. Let $\vB$ be an orientation of $B$ as in Case (a) or (b) of Proposition \ref{not_semi_anti_directed_prop}. Then
    $$D(G, \vB) = D(n, \vB).$$
\end{conjecture}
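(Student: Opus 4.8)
The strategy is the familiar ``stability plus finite optimisation'' scheme, built on a local reformulation of $\vB$-freeness.

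\emph{Step 1: a local criterion.} Every copy of $B$ has a unique vertex of degree $4$, its \emph{center}, so every copy of $\vB$ in an orientation $\vG$ of a host graph is centered at some vertex $v$; its two triangles meet only at $v$, and hence its two ``outer'' edges lie in the host graph and are vertex-disjoint. In Case (a) both outer vertices of each triangle send their edge into $v$, so a copy of $\vB$ centered at $v$ is precisely a pair of vertex-disjoint edges of the host graph inside $N^-_{\vG}(v)$, the orientation of the two outer edges being irrelevant. In Case (b) one triangle has both of its $v$-edges entering $v$ and the other has both leaving $v$, so a copy of $\vB$ centered at $v$ is precisely one edge of the host graph inside $N^-_{\vG}(v)$ together with one edge inside $N^+_{\vG}(v)$ (the two triangles being automatically edge-disjoint, since $N^-_{\vG}(v)\cap N^+_{\vG}(v)=\emptyset$). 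Hence, for a graph $G$, an orientation $\vG$ is $\vB$-free if and only if at every vertex $v$: in Case (a), $G[N^-_{\vG}(v)]$ has matching number at most $1$; in Case (b), at least one of $G[N^-_{\vG}(v)]$ and $G[N^+_{\vG}(v)]$ is edgeless. Since in $K_{1,b,c}$ every triangle passes through the apex $v_0$, so does every copy of $B$, and the only constraint is at $v_0$; orienting the underlying $K_{b,c}$ freely and counting the admissible bipartitions of $N(v_0)$ yields
$$D(K_{1,b,c},\vB)=\begin{cases}2^{bc}\bigl[(1+b)2^{c}+(1+c)2^{b}-(1+b)(1+c)\bigr]&\text{in Case (a)},\\[2pt]2^{bc}\bigl[2^{b+1}+2^{c+1}-4\bigr]&\text{in Case (b)}.\end{cases}$$

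\emph{Step 2: structure of a maximiser.} Let $G$ maximise $D(\cdot,\vB)$ on $n$ vertices. Theorem \ref{general_graph_theorem} does not apply here: for orientations as in Cases (a), (b), $\cM(\vB)$ fails to contain both pure orientations of the unique star $S_3\in\cM(B)$ (it contains only the orientation toward the center in Case (a), and only the directed-path orientation in Case (b)), and $\cM'(\vB)=\{M_2\}$ with $\ex(n,M_2)=n-1$ unbounded. Still, a supersaturation argument for $\cM(B)=\{S_3,M_2\}$ together with a counting argument in the style of Alon--Yuster and Buci\'c--Janzer--Sudakov, run through the local criterion of Step 1, should give a sharp coarse bound of the form $D(n,\vB)\le 2^{\lfloor n^2/4\rfloor+O(n)}$, and then a stability statement: any $G$ with $D(G,\vB)\ge 2^{\lfloor n^2/4\rfloor}$ admits a vertex partition $V(G)=V_1\cup V_2$ into near-equal parts across which $G$ is complete bipartite up to $o(n^2)$ edges. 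The crucial refinement is, again via Step 1, to show that all but a bounded number of the in-part edges are incident to a single vertex and that the partition is balanced: if two distinct vertices each had $\Omega(n)$ neighbours inside their own part, then the product of the corresponding local counts (each of shape $2^{(\text{side size})}\cdot(\text{independent-set-type count})$) would fall short of $2^{bc}$ by a factor super-polynomial in $n$, contradicting the coarse bound; and any further excess or deficiency of in-part edges changes the count only by a bounded factor that is never favourable. One concludes that, for large $n$, a maximiser has the form $K_{1,n_1,n_2}$ with $n_1+n_2=n-1$.

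\emph{Step 3: optimisation.} It remains to maximise the explicit formula of Step 1 over $n_1+n_2=n-1$. The factor $2^{n_1n_2}$ is maximised exactly at the balanced split and dominates the remaining factor (which is only $2^{O(n)}$), and a short calculation confirms that the product is strictly maximised, for all large $n$, at $n_1=\floor{\frac{n-1}{2}}$ and $n_2=\ceiling{\frac{n-1}{2}}$; this identifies $K_{1,\floor{(n-1)/2},\ceiling{(n-1)/2}}$ as a maximiser.

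\emph{Main obstacle.} The difficulty is concentrated in Step 2. Because these orientations fall outside the scope of Theorems \ref{general_graph_theorem}--\ref{general_gph_finite_thm}, one must reprove the relevant counting and stability facts from scratch; and, unlike in the edge-critical case, the extremal value is not a power of two (up to lower-order terms it is a parity-dependent constant multiple of $2^{\lfloor n^2/4\rfloor}$), so one needs a stability result sharp to within a multiplicative $1+o(1)$, not merely $2^{o(n^2)}$, together with a clean way to exclude competitors carrying two or more apex-like vertices or sporadic in-part edges. I expect the right tool is a ``defect'' form of the counting argument: order the vertices, charge each edge to an endpoint, and show that each in-part edge not at the apex, and each second vertex of large in-part degree, forces a loss bounded away from $1$ in the corresponding conditional count, the single-apex configuration being the unique way to keep every such loss minimal.
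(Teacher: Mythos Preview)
The statement you are addressing is Conjecture~\ref{all_bowties_conj}: the paper states it as an open conjecture and gives \emph{no proof}. There is therefore nothing in the paper to compare your argument against; what follows is an assessment of your proposal on its own merits.

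Your Step~1 is correct and in fact sharpens the paper's Proposition~\ref{not_semi_anti_directed_prop}: the local criteria you derive (matching number of $G[N^-_{\vG}(v)]$ at most $1$ in Case~(a); at least one of $G[N^\pm_{\vG}(v)]$ edgeless in Case~(b)) are right, and your exact formulas for $D(K_{1,b,c},\vB)$ refine the lower bounds the paper records. Your observation that $\cM'(\vB)=\{M_2\}$ with $\ex(n,M_2)=n-1$, and that neither Theorem~\ref{general_graph_theorem} nor Theorem~\ref{general_gph_finite_thm} applies (only one pure orientation of $S_3$ lies in $\cM(\vB)$ in Case~(a), and neither does in Case~(b)), is also correct and is exactly why the paper leaves this open.

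The genuine gap is Step~2, and you identify it honestly. What you have written there is a wish list, not an argument: ``a supersaturation argument \ldots\ together with a counting argument \ldots\ should give'' and ``I expect the right tool is'' are not proofs. Two specific difficulties deserve emphasis. First, to pin down the maximiser up to a multiplicative $1+o(1)$ (which you correctly note is what is needed here, since the extremal count is not a power of two) you need a stability statement considerably finer than anything Lemma~\ref{main_lemma} or the relevant-orientation machinery currently provides; those tools lose a factor of $2^{o(n^2)}$, not $1+o(1)$. Second, your ``defect'' idea---charging each extra in-part edge or secondary high-in-degree vertex with a loss bounded away from $1$---must contend with configurations where a few scattered in-part edges interact with the apex in ways that do not obviously cost anything (e.g.\ a single extra edge inside $V_1$ both of whose endpoints are out-neighbours of the apex in Case~(a) never creates a forbidden $2$-matching in $N^-(v_0)$); ruling these out requires comparing against the exact formula, not just a crude product bound.

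Step~3 is routine once Step~2 is in hand. In short: your reformulation is sound and your outline is plausible, but Step~2 is where the conjecture lives, and nothing you have written there goes beyond what the paper already knows.
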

Now, we have the remaining case:
\begin{conjecture}
    Let $\vB$ be an orientation of $B$ such that there are exactly three edges oriented towards the center vertex (or exactly three edges oriented away from the center vertex). Then
    $$D(n, \vB) = 2^{\ex(n, B)}$$
\end{conjecture}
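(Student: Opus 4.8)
\medskip
\noindent\textbf{A proof proposal.}
The lower bound $D(n,\vB)\ge 2^{\ex(n,B)}$ is the usual one: Theorem~\ref{extremal_thm_kfans} with $k=2$, $r=3$ gives $\ex(n,B)=t(n,2)+1$, realized by $T(n,2)$ plus one edge inside a part; every triangle of this graph contains that edge, so it is $B$-free and each of its $2^{\ex(n,B)}$ orientations is $\vB$-free. For the upper bound it suffices to show $D(G,\vB)\le 2^{\ex(n,B)}$ for every $n$-vertex $G$. Note that $\cM(B)=\{S_3,M_2\}$, and that since a bowtie placed with its center inside a single part of $K_2(t)$ uses exactly two of its four center-edges inside that part while $\vB$ has only one center-edge leaving its center, $\cM(\vB)$ contains the pure orientation of $S_3$ directed \emph{towards} its center but not the one directed away from it; hence Theorem~\ref{general_graph_theorem} does not apply, and (since $\cM'(\vB)=\{M_2\}$ with $\ex(n,M_2)=n-1$ unbounded) even its bound would be far too weak. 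The plan is to run the strategy behind Theorems~\ref{general_gph_finite_thm} and~\ref{k-fans_thm_1}, re-proving the steps that used both pure orientations.

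\emph{Step 1 (structural reduction).} By the asymptotic result of~\cite{bucic2023} we have $\log_2 D(n,\vB)=(1+o(1))\ex(n,B)$, so by a standard removal/stability argument any $G$ with $D(G,\vB)\ge 2^{\ex(n,B)}$ differs from $T(n,2)$ in $o(n^2)$ edges; fix an optimal bipartition $V(G)=V_1\cup V_2$. The key point is that $\cM(\vB)\ne\emptyset$: any two edges lying inside $V_1\cup V_2$ span a bowtie subgraph together with one further vertex (on the opposite side, or at a shared endpoint), so many such edges create many bowtie subgraphs and force many orientations of the incident bipartite edges to be excluded, whence $D(G,\vB)<2^{\ex(n,B)}$. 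Pushing this through shows $G$ has only $O(1)$ edges inside $V_1\cup V_2$ and $O(1)$ missing bipartite edges, i.e.\ $G$ is an $O(1)$-edit perturbation of $T(n,2)$.

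\emph{Step 2 (finite case analysis).} Write $F$ for the set of edges of $G$ inside the parts and $D$ for the missing bipartite edges. If $|F|\le 1$ and $D=\emptyset$ then $G\subseteq T(n,2)+e$ is $B$-free, so $D(G,\vB)=2^{|E(G)|}\le 2^{\ex(n,B)}$, with equality iff $G\cong T(n,2)+e$. Otherwise $|E(G)|>\ex(n,B)$ forces $|F|\ge|D|+2$. One then splits on the isomorphism type of $F$ (a matching $M_2$, a path $P_3$, a star $K_{1,k}$, or a triangle $K_3$ inside one part, plus the missing edges) and, in each case, (i) lists the bowtie subgraphs of $G$, (ii) for each of the $O(1)$ orientations of $F$ bounds the number of orientations of the bipartite edges avoiding a copy of $\vB$, and (iii) sums. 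The crucial feature is that \emph{at most one} orientation of $F$ leaves the bipartite edges unconstrained --- namely the one directing every edge of $F$ ``away'' from its natural center, which forces no bowtie at all --- and that this orientation contributes exactly $2^{t(n,2)-|D|}\le\tfrac12\,2^{\ex(n,B)}$, while every other orientation of $F$ loses a $2^{\Omega(n)}$ factor; summing, $D(G,\vB)<2^{\ex(n,B)}$.

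\emph{Where the difficulty lies.} Step 2 is routine when $F$ contains a matching of size two --- the bowties then sit on near-disjoint edge sets, so the per-vertex losses multiply to give a $2^{\Omega(n^2)}$ saving, dwarfing the $2^{O(|F|)}$ excess. The delicate cases are those where $F$ has \emph{no} matching of size two, i.e.\ $F$ is a star or a triangle inside one part, and the hardest --- which actually arises already inside Step 1 --- is a near-spanning star, where $G$ is essentially the complete tripartite graph $K_{1,s,s'}$ with $s+s'=n-1$ that Conjecture~\ref{all_bowties_conj} predicts to be extremal for the other bowtie orientations. There the relevant bowtie subgraphs pairwise share edges, so there is no product structure; and because only the ``towards the center'' pure orientation of $S_3$ is forbidden, the locally $\vB$-free orientations at a prospective center are asymmetric and far more numerous than in the anti-directed case of Theorem~\ref{k-fans_thm_1} --- for instance, with a triangle $F$ on $\{c_1,c_2,c_3\}\subseteq V_1$ the orientations of the edges $\{c_iy:i\in[3],\,y\in V_2\}$ that extend to a $\vB$-free orientation number $\Theta(4^{|V_2|})$, not $O(1)$, and one must show this is $O(4^{|V_2|})$ and not $\Theta(8^{|V_2|})$ via a dichotomy on which of the eight vertex-types occur. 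Making the $K_{1,s,s'}$ estimate tight enough to stay below $2^{\ex(n,B)}$ --- where the margin is only a constant factor --- is the main obstacle.
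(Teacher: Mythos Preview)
This statement is a \emph{conjecture} in the paper, not a theorem: the paper offers no proof and says explicitly, just after Proposition~\ref{not_semi_anti_directed_prop}, that for these orientations ``we have been unable to find any graph $G$ such that $D(G,\vB)>2^{\lfloor n^2/4\rfloor+1}$,'' while also observing that $\cM(\vB)$ is missing one of the two pure orientations of $S_3$, so that Theorems~\ref{general_graph_theorem} and~\ref{general_gph_finite_thm} do not apply. There is therefore no proof in the paper to compare your proposal against.

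Your proposal is not a complete proof either, and to your credit you say so. Your analysis of why the paper's machinery breaks is accurate: with three center-edges in and one out, only the ``toward'' pure orientation of $S_3$ lies in $\cM(\vB)$, hence $\cM'(\vB)=\{M_2\}$ and $\ex(n,\cM'(\vB))=n-1$, so Theorem~\ref{general_graph_theorem} yields only a trivial bound and the constancy hypothesis of Theorem~\ref{general_gph_finite_thm} fails. The genuine gap sits inside your Step~1. You assert that a near-extremal $G$ has only $O(1)$ edges inside the parts, but the sketch (``many such edges create many bowtie subgraphs and force many orientations\ldots to be excluded'') is exactly the content of Lemma~\ref{general_thm_lem}, and Case~1 of that lemma is the step that requires \emph{both} pure star orientations; without it, nothing prevents a vertex of linear degree inside its own part. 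You correctly flag this in your final paragraph, isolating the complete tripartite graph $K_{1,s,s'}$ (equivalently, a large star inside one part) as the crux and noting that the relevant count of locally $\vB$-free orientations is of order $4^{|V_2|}$ rather than $O(1)$. That estimate is precisely what would be needed, with the right constant, to finish the argument; neither you nor the paper supplies it, and that is why the statement remains a conjecture.
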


Next, we have a very general and difficult problem.

\begin{problem}
\label{classification_prob}
For which graphs $H$ is there an orientation $\vH$ such that 
$$D(n, \vH) = 2^{\ex(n, H)}?$$
\end{problem}

As a start to Problem \ref{classification_prob}, and in light of the results of Theorems \ref{k-fans_thm_1} and \ref{k-fans_thm_2}, it seems reasonable to conjecture the following:

    \begin{conjecture}
        Let $\vH$ be a directed graph with underlying graph $H$ such that $\cM(\vH) = \overrightarrow{\cM(H)}$. Then for large enough $n$,
        $$D(n, \vH) = 2^{\ex(n, H)}.$$
    \end{conjecture}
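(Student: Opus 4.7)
The hypothesis $\cM(\vH) = \overrightarrow{\cM(H)}$ forces $\cM'(\vH) = \cM(H)$, so Theorem~\ref{general_graph_theorem} (provided $\cM(H)$ contains a star $S_{t+1}$, as happens for all $(k,r)$-fans and more generally whenever some color class in an optimal $(p+1)$-coloring of $H$ has a common neighbor) gives
\[
D(n, \vH) \;\leq\; 2^{t(n, p) + p\,\ex(n, \cM(H))}.
\]
Combined with the trivial lower bound $D(n,\vH) \geq 2^{\ex(n, H)}$ and the Erd\H{o}s--Simonovits-type identity $\ex(n, H) = t(n, p) + \ex(n, \cM(H))$ (which holds for the natural classes of $H$ under consideration, including the fans), the task reduces to removing the factor $p$ in front of $\ex(n, \cM(H))$ in the exponent.

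The plan is a stability and swapping argument. Let $G$ attain $D(n, \vH)$, and fix an optimal $p$-partition $V_1, \ldots, V_p$; by Theorem~\ref{general_gph_finite_thm} the internal edges total at most $p\,\ex(n, \cM(H))$. If some part $V_i$ still contains a copy of $M \in \cM(H)$, then by the hypothesis there is an orientation of the crossing edges incident to $V(M)$ --- call it \emph{dangerous} --- such that \emph{every} orientation of $M$ produces a copy of $\vH$ inside $G$. Hence each $\vH$-free orientation of $G$ must avoid the dangerous pattern at that copy of $M$, costing a factor of at least $2^{-1}$ in the overall count. Comparing $G$ with the graph $G'$ obtained by swapping one internal edge of $M$ for an extra crossing edge restoring the Tur\'an structure, the net change in $D(\cdot, \vH)$ is nonnegative, with equality only when no copy of $M$ remains inside any $V_i$. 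Iterating the swap forces each $V_i$ to be $\cM(H)$-free, so $|E(G)| \leq t(n, p) + \ex(n, \cM(H)) = \ex(n, H)$ and $G$ is $H$-free; then every orientation of $G$ is $\vH$-free, yielding $D(n, \vH) = 2^{\ex(n, H)}$.

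I expect the main obstacle to be the quantitative swap step: the loss from a copy of $M$ in a single part must be a strict $2^{-1}$ factor, and this saving cannot be cancelled by interactions between several copies of $M$ living in different parts. This requires showing that the dangerous crossing configurations at distinct copies of $M$ can be chosen to be disjoint (or at least independent), and this is precisely where the full strength of $\cM(\vH) = \overrightarrow{\cM(H)}$ --- not merely the presence of a single bad orientation of $M$ --- is used, since it lets us charge the full $|E(M)|$ edges rather than a $1/p$ fraction. A secondary obstacle is the star assumption implicit in Theorem~\ref{general_graph_theorem}: for the fully general conjecture one may need a preparatory lemma guaranteeing that some $(t+1)$-star lies in $\cM(H)$ whenever $\chi(H) \geq 3$, or else a variant of Theorem~\ref{general_graph_theorem} whose ``pure'' structural hook is some other graph in $\cM(H)$.
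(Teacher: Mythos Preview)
This statement is listed in the paper as an \emph{open conjecture} (in the ``Questions and open problems'' subsection); the paper does not prove it, and in fact only verifies it for the small cases $\vF_{2,r}$ and $\vF_{3,3}$ via lengthy ad hoc case analysis (Theorems~\ref{k-fans_thm_1} and~\ref{k-fans_thm_2}). So there is no proof in the paper to compare against, and your task is genuinely to prove something new.

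Your sketch has the right intuitions but the central quantitative step does not work as written. You assert that the existence of a ``dangerous'' orientation of the crossing edges incident to $V(M)$ forces a saving of $2^{-1}$ in the count of $\vH$-free orientations. But the dangerous pattern you describe is a single orientation of $\Theta(n)$ crossing edges; forbidding it removes only a $2^{-\Theta(n)}$ fraction of orientations, not half. The paper's Lemma~\ref{parts_lem} obtains a genuine constant-factor saving from a copy of $M$ in a part, but it does so by partitioning the crossing neighborhood into $\Theta(n)$ disjoint blocks and arguing that each block independently has a constant chance of completing $\vH$; crucially this argument needs the total number of internal edges to be bounded by an absolute constant, which is exactly the extra hypothesis of Theorem~\ref{general_gph_finite_thm}. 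Your proposal invokes Theorem~\ref{general_gph_finite_thm} to bound the internal edges, but that theorem requires $\ex(n,\cM'(\vH))$ to be constant, which is not part of the conjecture's hypotheses.

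There are two further unjustified steps. First, the identity $\ex(n,H)=t(n,p)+\ex(n,\cM(H))$ is not a general theorem; it is known for fans and a few other families, but fails or is open in general, so you cannot simply quote it. Second, the swap $G\mapsto G'$ (delete an internal edge, add a crossing edge) needs you to show $D(G',\vH)\geq D(G,\vH)$, and your argument for this is circular: you compare the $2^{-1}$ loss at $M$ against the factor $2$ gained from the new crossing edge, but the new crossing edge may itself lie in many copies of $H$ and hence may not contribute a full factor of $2$. The paper sidesteps this entirely: even in the small cases it handles, it never swaps edges but instead directly upper-bounds $D(G,\vH)$ by a probabilistic count tailored to each possible internal configuration (see the table in the proof of Theorem~\ref{k-fans_thm_2}). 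Getting a uniform swap lemma that works for arbitrary $M\in\cM(H)$ is precisely the obstacle that keeps this a conjecture.
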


\subsection{Notation}
We denote by $[n]$ the set $\{1, \hdots, n\}$. By a graph we mean a pair $G = (V, E)$ where $V$ is a set of vertices and $E$ is a set of unordered pairs of vertices, called edges, i.e., $E\subseteq \binom{V}{2} = \{Y\subseteq V:|Y| = 2\}$. We write $xy$ for the edge $\{x, y\}$. All graphs do not contain loops or multiple edges. We refer to the set of vertices of a graph $G$ as $V(G)$ and the edges as $E(G)$, and denote $e(G) = |E(G)|$. A graph $H$ is said to be a subgraph of $G$, denoted $H\subseteq G$ if $V(H)\subseteq V(G)$ and $E(H)\subseteq E(G)$. If $H$ is a subgraph of $G$, we denote by $G\setminus H$ the graph formed by deleting from $G$ the vertices of $H$ and every edge incident to a vertex of $H$. If $F\subseteq E(G)$, $G\setminus F$ denotes the graph formed by deleting the edges in $F$ (but no vertices). Further, if $V'\subseteq V(G)$, then $G[V']$ is the graph formed by deleting all vertices not in $V'$ and all edges incident to a vertex not in $V'$, i.e., $G[V'] = (V', E\cap \binom{V'}{2})$. If $A, B\subseteq V(G)$ then $E(A, B)$ denotes the set of edges $xy\in E(G)$ with $x\in A$, $y\in B$. When ambiguity will not arise, we denote $E(G[A])$ simply by $E(A)$. For $v\in V(G)$, we denote by $N(v)$ the set of vertices in $G$ adjacent to $v$ and say the \textit{degree} of $v$ is $d(v) = |N(v)|$. If $A\subseteq V(G)$, $N_A(v)$ denotes $N(v)\cap A$ and  and $d_A(v) = |N_A(v)|$. 

A $p$\emph{-partite} graph $G$ is a graph such that there is a partition $V(G) = V_1\cup \hdots\cup V_p$ with $E(V_i) = \emptyset$ for each $i\in [p]$. We denote the complete $p$-partite graph where each part has size $k$ by $K_p(k)$.  The \textit{chromatic number} of a graph $G$, denoted $\chi(G)$, is the smallest number $p$ so that $G$ is $p$-partite. For a graph $G$ we call a $p$-partition $V_1\cup\hdots\cup V_p$ of $V(G)$ \textit{optimal} if it minimizes the number of edges within parts over all $p$-partitions. That is a partition which minimizes $\sum_{i=1}^p |E(V_i)|$. We denote by $T(n, p)$ the Tur\'{a}n graph with $p$ parts on $n$ vertices and denote $t(n, p) = |E(T(n, p))|$.

By a \textit{directed graph} we mean a pair $\vH = (V, E)$ where $V$ is a set of vertices, and $E\subseteq V\times V$ is a set of ordered pairs of vertices. Unless otherwise specified, we do not allow loops, multiple edges, or bidirected edges, i.e., if $(x, y)\in E$ then $(y, x)\notin E$ and for all $x$, $(x,x)\notin E$. For a directed graph $\vH$, we denote the vertex set of $\vH$ by $V(\vH)$ and the edge set of $\vH$ by $E(\vH)$.  By the \emph{underlying graph} of a directed graph $\vH$, we mean the graph obtained by forgetting the orientations placed on the edges of $\vH$. That is, if $\vH = (V, E)$, then the underlying graph of $\vH$ is $H = (V, E')$ where $E' = \{\{x, y\}: (x, y)\in E\}$.  Let $\vH$ be a directed graph and let $v$ be a vertex of $\vH$. We say $w$ is an \textit{in-neighbor} of $v$ if $(v, w)$ is an edge of $\vH$, and $w$ is an \textit{out-neighbor} of $v$ if $(w, v)$ is an edge of $\vH$. If $A, B\subseteq V(\vH)$ we denote $\vE(A, B) = \{(v, w)\in E(\vH): v\in A, w\in B\}$. In addition, we denote $E(A, B) = \vE(A, B)\cup \vE(B, A)$.  For any undefined notation regarding directed graphs we apply the notation to to the underlying graph $H$ of $\vH$. For example, we define the chromatic number $\chi(\vH)$ to be $ \chi(H)$ where $H$ is the underlying graph of $\vH$.

The following notion will appear repeatedly throughout the paper:
\begin{definition}
    Let $\vH$ be a directed graph, and let $U, V\subseteq V(\vH)$. If every edge between $U$ and $V$ is oriented towards $U$ or every edge between $U$ and $V$ is oriented towards $V$, we call the pair $(U, V)$ \emph{pure}.  Similarly, if there is a constant $t$ such that all but at most $t$ edges between $U$ and $V$ are oriented towards $U$ or all but at most $t$ edges between $U$ and $V$ are oriented towards $V$ then we call the pair $(U, V)$ $t$-\emph{almost pure}.
\end{definition} 
If $U$ consists of a single vertex $u$, we denote the pair $(\{u\},V)$ by $(u,V)$.

\section{Preliminary Lemmas}

In this section we present three lemmas that will be used repeatedly throughout the rest of the paper. The first lemma is a basic application of the directed regularity lemma, and is almost identical to Lemma 2.8 from \cite{bucic2023}. However, as the proof is slightly more general, we have included it in Appendix \ref{Proof_of_Lemma}.

\begin{lemma}
\label{main_lemma}
    Let $\vH$ be a directed graph with chromatic number $\chi(\vH)\geq 3$, and let $p = \chi(\vH) -1$. For any $\delta>0$, there exists $n_0$ such that for any graph $G$ with $V(G) = n\geq n_0$ and at least $2^{t(n,p)}$ many $\vH$-free orientations, there is a partition $V(G) = V_1\cup\hdots \cup V_p$ such that $|E(V_1)| +\hdots + |E(V_p)|\leq \delta n^2$.
\end{lemma}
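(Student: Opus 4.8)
The plan is to prove Lemma \ref{main_lemma} by invoking the directed version of Szemer\'edi's regularity lemma, following the template of Lemma 2.8 in \cite{bucic2023}. First I would fix $\delta > 0$ and choose the regularity parameters carefully: pick $\eps$ small in terms of $\delta$ and $p$, and let $M$ be the upper bound on the number of parts guaranteed by the regularity lemma for this $\eps$. Suppose, for contradiction, that $G$ is a graph on $n \geq n_0$ vertices (with $n_0$ large) with at least $2^{t(n,p)}$ many $\vH$-free orientations, but such that \emph{every} $p$-partition of $V(G)$ has more than $\delta n^2$ edges inside parts. The key point is that the second hypothesis forces $G$ itself to be ``far from $p$-partite'' in a robust sense.

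Next I would count orientations. The number of orientations of $G$ is $2^{e(G)}$. If $e(G) \leq t(n,p) - 1$ we would already be below $2^{t(n,p)}$, so we may assume $e(G) \geq t(n,p)$; more importantly, the assumption that $G$ is far from $p$-partite should be leveraged via a stability/supersaturation argument to conclude that $G$ contains many copies of some small dense configuration. The heart of the argument is then: apply the directed regularity lemma to a putative $\vH$-free orientation $\vec G$; the reduced (directed) multigraph $R$, on at most $M$ clusters, must avoid a ``blown-up'' copy of $\vH$, which by a standard embedding/counting argument means $R$ — after discarding sparse and irregular pairs — is essentially $p$-colourable, i.e.\ has a $p$-partition of its clusters with few edges between same-part clusters. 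Pulling this partition back to $V(G)$ yields a $p$-partition of $V(G)$ with at most (something like) $(\eps + 1/M + \text{density threshold}) n^2 < \delta n^2$ edges inside parts — contradicting our assumption. Since this must hold for \emph{every} $\vH$-free orientation, and we are told there are at least $2^{t(n,p)} \geq 1$ of them, a single such orientation suffices to derive the contradiction; in fact one does not even need the full count $2^{t(n,p)}$ here, only that an $\vH$-free orientation exists with $G$ far from $p$-partite — but I would keep the counting hypothesis since it is what is available and matches \cite{bucic2023}.

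The main obstacle is the embedding step: showing that if the reduced digraph $R$ of an $\vH$-free orientation is \emph{not} close to $p$-partite, then one can embed $\vH$ into $\vec G$. This requires the directed counting/embedding lemma — given $\chi(\vH) = p+1$, a copy of $K_{p+1}$ in the reduced graph (with all pairs regular and dense enough) blows up to contain every orientation on $p+1$ parts of the appropriate sizes, hence contains $\vH$ since $\vH$ has a proper $(p+1)$-colouring and each colour class can be placed in one cluster. So ``$R$ not nearly $p$-partite'' $\Rightarrow$ ``$R \supseteq K_{p+1}$ on dense regular pairs'' (Turán-type argument on $R$ after cleaning) $\Rightarrow$ ``$\vec G \supseteq \vH$'', a contradiction. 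The bookkeeping to convert ``$R$ is nearly $p$-partite'' into ``$V(G)$ has a $p$-partition with $\leq \delta n^2$ internal edges'' is routine: each same-part pair of clusters contributes at most $(n/|R|)^2$ edges, irregular pairs contribute at most $\eps n^2$ total, sparse pairs at most $(\text{threshold})\binom{n}{2}$, and clusters themselves at most $|R|(n/|R|)^2$, all of which are $\leq \delta n^2$ for suitable choices. I would relegate the detailed parameter chase and the statement of the directed regularity and embedding lemmas to Appendix \ref{Proof_of_Lemma}, noting only that the sole modification from \cite{bucic2023} is working with an arbitrary $\vH$ of chromatic number $p+1$ rather than a specific family.
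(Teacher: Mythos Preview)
Your proposal has a genuine gap, and it is exactly at the point where you write ``in fact one does not even need the full count $2^{t(n,p)}$ here, only that an $\vH$-free orientation exists with $G$ far from $p$-partite.'' This is false, and the paper's proof uses the count $2^{t(n,p)}$ in an essential way. From a single $\vH$-free orientation $\vG$, directed regularity gives a reduced digraph $R$ whose \emph{bidirected} subgraph is $K_{p+1}$-free (this is the embedding step you describe). But $K_{p+1}$-free alone does \emph{not} imply ``close to $p$-partite'': stability only applies when the bidirected subgraph has close to $t(m,p)$ edges, and nothing in your argument forces this. Meanwhile, the one-directional edges of $R$ are completely unconstrained by $\vH$-freeness, and when you pull a $p$-partition of the clusters back to $V(G)$, edges of $G$ lying between clusters that are dense in one direction only will land inside parts and are not covered by your bookkeeping list $(\eps + 1/M + \text{threshold})n^2$. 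Concretely, take $G = K_n$ with a transitive tournament orientation: this is $\vH$-free for any directed cycle $\vH$, yet $G$ is maximally far from $p$-partite and the bidirected reduced graph is essentially empty.

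The paper's proof closes this gap by a counting argument over cluster-graph outcomes: it shows that if \emph{every} $\vH$-free orientation of $G$ yields a bidirected reduced graph with fewer than $(1-\tfrac{1}{p})\tfrac{m^2}{2} - \beta m^2$ edges, then summing over all choices of partition, irregular pairs, and edge sets of $C$, and bounding the orientations compatible with each such $C$ (few bidirected pairs means many pairs are nearly one-directional, which collapses the count), one gets fewer than $2^{t(n,p)}$ $\vH$-free orientations in total --- a contradiction. Only then does one have an $\vH$-free orientation whose bidirected reduced graph is simultaneously $K_{p+1}$-free \emph{and} near-extremal, so that Simonovits stability applies and the pulled-back partition works. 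You need to reinstate this counting step; it is not optional.
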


Let $G$ be a graph with $n$ vertices, and suppose for some $\delta >0$, we can partition $V(G)$ into parts $V_1\cup \hdots \cup V_p$ such that $|E(V_1)|+ \hdots +|E(V_p)|<\delta^2n^2$. As in \cite{bucic2023}, given a partition $V(G) = V_1\cup \hdots \cup V_p$ we call an orientation $\vG$ of $G$ \textit{relevant} if, for any $X_1\subseteq V_i$, $X_2\subseteq V_j$ with $i\neq j$ and $|X_1|, |X_2|>2\delta n$, there are at least $|X_1||X_2|/10$ edges directed from $X_1$ to $X_2$. The following lemma shows that if a graph $G$ looks approximately like a $p$-partite graph, then $G$ does not have many non-relevant orientations (and thus if $G$ has many edges, then $G$ has many relevant orientations). 
\begin{lemma}
    Let $\delta>0$ and let $n$ be sufficiently large. Let $G$ be a graph on $n$ vertices and let $V(G) = V_1\cup\hdots\cup V_p$ be a $p$-partition of $V(G)$ such that 
    $$|E(V_1)|+ \hdots +|E(V_p)|<\delta^2n^2.$$ 
    Then there are at most $2^{t(n, p)-1}$ non-relevant orientations of $G$.
    \label{relevant_orientations_lem}
\end{lemma}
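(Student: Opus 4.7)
The plan is to bound the number of non-relevant orientations by a union bound over all potential ``witnesses'' $(i,j,X_1,X_2,\text{direction})$ to non-relevance, with each witness contributing a count controlled by a Chernoff-type estimate.

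First I dispose of the trivial regime $e(G)\leq t(n,p)-1$: here every orientation of $G$ already fits into $2^{t(n,p)-1}$, so we may assume $e(G)\geq t(n,p)$. Combining this assumption with $\sum_i|E(V_i)|<\delta^2 n^2$ and the standard Tur\'an-type fact $\sum_{i<j}|V_i||V_j|\leq t(n,p)$ yields
$$\sum_{i<j}\bigl(|V_i||V_j|-e(V_i,V_j)\bigr)\leq \delta^2 n^2,$$
i.e., there are at most $\delta^2 n^2$ ``missing'' inter-part edges in total. Hence for any witness $X_1\subseteq V_i$, $X_2\subseteq V_j$ with $i\neq j$ and $|X_1|,|X_2|>2\delta n$, we get $e(X_1,X_2)\geq |X_1||X_2|-\delta^2 n^2\geq \tfrac{3}{4}|X_1||X_2|$, using $|X_1||X_2|>4\delta^2 n^2$. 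So every large candidate pair is automatically dense in $G$.

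Next I bound the number of orientations of $G$ that are bad for a fixed witness. Setting $e:=e(X_1,X_2)$, since $|X_1||X_2|/10\leq \tfrac{2}{15}\,e$ is safely below $e/2$, a Chernoff bound gives that at most $2^{(1-c)e}$ orientations of the $e$ edges between $X_1$ and $X_2$ have fewer than $|X_1||X_2|/10$ pointing from $X_1$ to $X_2$, for some absolute constant $c>0$. Multiplying by $2^{e(G)-e}$ for the remaining edges, and using $e\geq 3\delta^2 n^2$ together with $e(G)\leq t(n,p)+\delta^2 n^2$, gives at most $2^{t(n,p)-c'\delta^2 n^2}$ bad orientations for this witness, where $c'>0$ is absolute.

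Finally I union-bound over witnesses: there are at most $\binom{p}{2}\cdot 2^{2n}\cdot 2 = 2^{O(n)}$ choices of $(i,j,X_1,X_2,\text{direction})$, which for $n$ large in terms of $\delta$ is swamped by $2^{c'\delta^2 n^2/2}$. So the total number of non-relevant orientations is at most $2^{t(n,p)-c'\delta^2 n^2/2}\leq 2^{t(n,p)-1}$. The main obstacle is the Chernoff step: the exponential saving per witness must beat the slack $\delta^2 n^2$ by which $e(G)$ can exceed $t(n,p)$. This only works because the density bound $e(X_1,X_2)\geq \tfrac{3}{4}|X_1||X_2|$ pins the bad-event density well below $1/2$; without that geometric input, the trivial all-orientations bound $2^{e(G)}$ would already exceed $2^{t(n,p)-1}$.
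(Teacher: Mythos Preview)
Your proof is correct and follows the same union-bound-plus-binomial-tail strategy as the paper. In fact you are more careful than the paper in one respect: the paper's displayed factor $2^{t(n,p)+\delta^2n^2-|X_1||X_2|}$ for the ``remaining edges'' tacitly uses $e(X_1,X_2)=|X_1||X_2|$, whereas you first dispose of the trivial case $e(G)<t(n,p)$ and then legitimately derive $e(X_1,X_2)\geq\tfrac34|X_1||X_2|$; the only point to watch is that your final step implicitly needs the Chernoff constant to satisfy $c>1/3$ (so that $c'=3c-1>0$), which indeed holds since $1-H(2/15)\approx 0.43$.
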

\begin{proof}
Fix $X_1\subseteq V_i, X_2\subseteq V_j$ with $i\neq j$ and $|X_1|, |X_2|\geq 2\delta n$. There are
$$\sum_{i = 0}^{|X_1||X_2|/10}{\binom{|E(X_1, X_2)|}{ i}}\leq \sum_{i = 0}^{|X_1||X_2|/10}{\binom{|X_1||X_2|}{ i}}\leq2^{|X_1||X_2|/2}$$
many ways to orient the edges between $X_1$ and $X_2$ so that there are at most $|X_1||X_2|/10$ edges directed from $X_1$ to $X_2$. Since $G$ has at most $t(n, p)+\delta^2n^2$ edges, there are at most 
$$2^{t(n, p)+\delta^2n^2-|X_1||X_2|}2^{|X_1||X_2|/2} = 2^{t(n, p)+\delta^2n^2-|X_1||X_2|/2}\leq 2^{t(n,p)-\delta^2n^2}$$ 
ways to orient the whole graph such that there are at most $|X_1||X_2|/10$ edges from $X_1$ to $X_2$.  Finally, there are at most $2^{2n}$ ways to choose $X_1, X_2$. So, for large enough $n$, there are at most
$$2^{2n}2^{t(n, p)-\delta^2n^2}\leq 2^{t(n, p)-1}$$ of these non-relevant orientations.
\end{proof}

Next we have a lemma that we will use repeatedly.

\begin{lemma}
\label{embedding_lem_dir}
    Let $0<\delta<\frac{1}{8k}$ and suppose $\vH$ is a directed graph with $V(\vH) = k$ and $\chi(\vH) \leq p$. Then there exists $n_0$, depending only on $\vH$, such that for any $n>n_0$ the following holds: Let $G$ be a $p$-partite graph on $n$ vertices with parts $V_1, \hdots, V_p$ each of size at least $10^k\delta n$ and assume $\vG$ is a relevant orientation of $G$. Let $c: V(\vH)\to [p]$ be a proper coloring of $\vH$. Then there is an embedding of $\vH$ into $\vG$ such that a vertex $v$ of $\vH$ lies in $V_j$ whenever $c(v) = j$.
\end{lemma}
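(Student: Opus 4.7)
The plan is a vertex-by-vertex greedy embedding. Fix any enumeration $v_1,\dots,v_k$ of $V(\vH)$ and, for each $i$, maintain a candidate set $A_i\subseteq V_{c(v_i)}$, initialised to $V_{c(v_i)}$ so that $|A_i|\geq 10^k\delta n$. At step $i$ I will select an image $u_i\in A_i$ for $v_i$; then, for every not-yet-embedded neighbour $v_l$ of $v_i$ in $H$, I replace $A_l$ by $A_l\cap N^{\sigma}(u_i)$, where $N^{\sigma}(u_i)$ is the in- or out-neighbourhood of $u_i$ in $\vG$ chosen so that every vertex of the refined $A_l$ has the correct oriented adjacency with $u_i$ dictated by $\vH$. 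With this bookkeeping, the final $u_1,\dots,u_k$ will automatically realise $\vH\subseteq\vG$ in a way that respects $c$.

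The key invariant I will maintain is that, just before step $i$, $|A_i|\geq 10^{k-i+1}\delta n$. Granted this, the heart of the argument is the following counting step: for any single unembedded neighbour $v_l$ of $v_i$, the \emph{bad} set $B_l:=\{u\in A_i:|A_l\cap N^{\sigma}(u)|<|A_l|/10\}$ has size at most $2\delta n$. Indeed, since $c$ is a proper colouring, $A_i$ and $A_l$ lie in different parts; by the invariant $|A_l|\geq 10\delta n>2\delta n$; so if $|B_l|>2\delta n$ as well, then relevance of $\vG$ applied to the pair $(B_l,A_l)$ in the direction determined by $\sigma$ would yield at least $|B_l||A_l|/10$ directed edges between them, contradicting the per-vertex cap of $|A_l|/10$. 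Since $v_i$ has at most $k-i$ unembedded neighbours, the union of all bad sets has size at most $2(k-i)\delta n$. A \emph{good} choice of $u_i$ therefore exists as soon as $|A_i|>2(k-i)\delta n$; by the invariant this reduces to $10^{m+1}>2m$ with $m=k-i\geq 0$, which is immediate.

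To close the induction, observe that the invariant is preserved: $A_i$ is shrunk at most $i-1$ times before step $i$, each shrink by a factor of at least $1/10$ by our good choice of $u_m$, so $|A_i|\geq 10^k\delta n\cdot 10^{-(i-1)}=10^{k-i+1}\delta n$. The only real obstacle is careful bookkeeping---tracking the direction $\sigma$ so that the chain of updates really does produce an oriented embedding, and checking that both size conditions required to invoke relevance are met at every application---but no substantive new idea is needed beyond running the induction on $i$ from $1$ to $k$.
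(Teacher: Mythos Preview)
Your argument is correct and is essentially the same greedy embedding as the paper's: the paper packages it as an induction on $k$ (pick an image for one vertex with at least $|V_j|/10$ in- and out-neighbours in every other part, split each $V_j$ into $V_j^0,V_j^1$, and recurse), while you unroll the same idea into an explicit iterative procedure maintaining shrinking candidate sets. The only bookkeeping point you left implicit is that $u_i$ must also avoid the previously chosen $u_1,\dots,u_{i-1}$, but this costs at most $k-1$ additional exclusions and is absorbed by taking $n\geq n_0$.
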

\begin{proof}
    We prove this by induction on $k$. Clearly the statement holds for all $p$ when $k = 1$. 
    
    Suppose now that  $k>1$ and the statement holds for $k-1$. Let $\vH$ be a directed graph on $k$ vertices with $\chi(\vH)\leq p$. Let $c:V(\vH)\to [p]$ be a proper coloring of $\vH$. Fix a labeling $V(\vH) =\{v_1, \hdots, v_k\}$. 
    
    Fix a pair $(i, j)$ with $i\neq j \in [k]$. Define $V_{i,j}^1$ to be those vertices $v\in V_i$ with fewer than $\frac{1}{10}|V_j|$ in-neighbors in $V_j$ and let $V_{i,j}^0$ be those vertices $v\in V_i$ with fewer than $\frac{1}{10}|V_j|$ out-neighbors in $V_j$. Then we have $|\vE(V_{i, j}^1, V_j)|\leq \frac{1}{10}|V_{i, j}^1||V_j|$, so since $\vG$ is relevant, $V_{i, j}^1\leq 2\delta n$. Similarly, $V_{i, j}^0\leq 2\delta n$. Thus, we have 
    $$|\bigcup_{j\neq i}(V_{i, j}^1\cup V_{i, j}^0)|\leq 4k\delta n\leq \frac{1}{2}|V_i|.$$ Therefore, there is a vertex $v\in V_i$ such that $v$ has at least $\frac{1}{10}|V_j|$ in-neighbors and $\frac{1}{10}|V_j|$ out-neighbors in $V_j$.  Let 
    $$V_j^1 = \{x\in V_j: (x, v)\in E(\vG)\} \text{ and } V_j^0 = \{x\in V_j: (v, x)\in E(\vG)\}.$$
    Then $|V_j^t|\geq \frac{1}{10}|V_j|\geq 10^{k-1}\delta n$ for $t\in \{0, 1\}$.  
    
    Define a new coloring $c': V(\vH)\setminus \{v\}\to [p]\times \{0,1\}$ as follows: For each $x\in V(\vH)$ with $c(x)=j$, set $c'(x) = (j, 1)$ if $(x, v)$ is an edge of $\vH$, and set $c'(x) = (j, 0)$ otherwise.  Note $c'$ is a proper coloring of $\vH\setminus\{v\}$. Consequently, by induction, there are vertices $\{w_1,\hdots, w_{k-1}\}\subseteq (V_i\setminus \{v\})\cup\bigcup_{j\neq i}(V_j^1\cup V_j^0)$ such that 
    $$\vH\setminus \{v_k\} \subseteq \vG[\{w_1,\hdots, w_{k-1}\}] $$ 
    and for each $s\in [k]$, $w_s$ lies in $V_j^t$ if $c'(v_s) = (j,t)$ and $j \neq i$ and $w_s$ lies $V_i\setminus\{v\}$ if $c(w_s) = i$. Then by construction, 
    $$\vH \subseteq G[\{v\}\cup \{w_1,\hdots,w_k-1\}].$$
\end{proof}

\section{Proof of Theorems \ref{general_graph_theorem} and \ref{general_gph_finite_thm}}
In this section we prove Theorems \ref{general_graph_theorem} and \ref{general_gph_finite_thm}.  We begin with a lemma, Lemma \ref{general_thm_lem}, which will form the bulk of the proof of both theorems.  The idea of Lemma \ref{general_thm_lem} is as follows: For any constant $a$, if $G$ has $2^{t(n, p) +m}$ many $\vH$-free orientations for some $m>0$, then there is some number $q$ and vertices $v_1, \hdots, v_q$ can be deleted with at least $2^{t(n-q, p) +m +a}$ many $\vH$-free orientations remaining. Though the number of $\vH$-free orientations of $G$ may not increase after deleting these $q$ vertices, we will have
$$\frac{\log(D(G, \vH))}{t(n, p)}< \frac{\log(D(G \setminus \{v_1, \hdots, v_q\}, \vH))}{t(n-q, p)},$$ 
which turns out to be sufficient for our purposes.
\begin{lemma}
\label{general_thm_lem}
    Let $\vH$ be a directed graph satisfying the conditions of Theorem \ref{general_graph_theorem}. Let $a>0$. Suppose $n$ is sufficiently large, $G$ is a graph on $n$ vertices, and $M\in \cM'(\vH)$ is such that the following hold:
    \begin{itemize}
        \item $G$ has $2^{t(n, p) + m}$ many $\vH$-free orientations for some $m>0$.
        \item There is an optimal $p$-partition $V(G) = V_1\cup \hdots \cup V_p$ of $G$ such that $M$ is a subgraph of $V_1$.
    \end{itemize}
    Then there is some number $q\in \{1, |V(M)|\}$ and vertices $v_1, \hdots, v_q$ such that $G\setminus \{v_1, \hdots, v_q\}$ has at least
    $2^{t(n-q, p) +m + a}$ many 
    $\vH$-free orientations.
\end{lemma}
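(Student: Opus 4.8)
The plan is to exploit the presence of $M$ inside the part $V_1$ together with the hypothesis that $\cM(\vH)$ contains both pure orientations of the star $S_{t+1}$. The key point is that $M \in \cM'(\vH)$ means \emph{every} orientation of $M$ lies in $\cM(\vH)$, so no matter how an $\vH$-free orientation $\vG$ of $G$ orients the edges inside $V_1$ that form $M$, the resulting oriented $M$ is one of the "dangerous" configurations. By the definition of $\cM(\vH)$ (via Definition \ref{M_def}), if we can find a large relevant $(p-1)$-partite piece among $V_2, \dots, V_p$ together with enough common neighbors of $V(M)$, then Lemma \ref{embedding_lem_dir} lets us embed $\vH$, contradicting $\vH$-freeness. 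So the strategy is: first pass to relevant orientations (Lemma \ref{relevant_orientations_lem} says all but $2^{t(n,p)-1}$ of the orientations are relevant, and since $m>0$ there are more than $2^{t(n,p)}$ orientations total, hence still at least $2^{t(n,p)+m-1}$ relevant ones); then argue that in any relevant $\vH$-free orientation the edges from $V(M)$ to the rest of $G$ are highly constrained.

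The main work is the following dichotomy. Fix a vertex $u \in V(M)$, say the intended center of the star configuration. In a relevant $\vH$-free orientation $\vG$, consider the edges from $u$ to $V_2 \cup \dots \cup V_p$ (there are $\opoo\,\frac{p-1}{p}n$ of them, since the partition is optimal and $n$ is large). I claim the pair $(u, V_2\cup\dots\cup V_p)$ must be "$O(1)$-almost pure": if $u$ had, say, $t$ in-neighbors $X$ and $t$ out-neighbors $Y$ among $V_2\cup\dots\cup V_p$ (after removing those, together with $u$ and $V(M)\setminus\{u\}$, and a $\delta n$-sized exceptional set, from the parts), then because $\chi(M)=1$ (it sits inside one part) and $M \otimes K_{p-1}(t)$ contains some $L\in\cL$ — together with the fact that $\cM(\vH)$ contains the correct pure orientations of $S_{t+1}$ — one can embed a copy of $\vH$ using $\vG[V(M)]$ (which is a valid orientation of $M$, hence in $\cM(\vH)$) on $V_1$, the $t$ in- and $t$ out-neighbors of $u$ to realize the star edges, and then Lemma \ref{embedding_lem_dir} to fill in the remaining $(p-1)$-colorable part of $\vH$ inside the large relevant piece. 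This contradicts $\vH$-freeness. Hence each $u \in V(M)$ sends all but at most $a' = a'(\vH)$ of its cross-edges in a single direction.

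Given this almost-purity, here is how to finish. Either (Case 1) there is a single vertex $v_1 \in V(M)$ whose cross-edges are almost all in one direction but which still has "many" exceptional cross-edges summed over all the constraints — more precisely, I set up a counting argument showing that if we \emph{keep} $v_1$, the $\vH$-free orientations of $G$ restricted appropriately are in bijection (up to a factor $2^a$) with orientations of $G\setminus\{v_1\}$ together with a restricted, essentially-determined set of choices for the $\Theta(n)$ edges at $v_1$; deleting $v_1$ we lose at most $t(n,p)-t(n-1,p) = \opoo\frac{p-1}{p}n$ edges but the constraints on those edges were so severe (all but $O(1)$ forced) that the number of surviving orientations is at least $D(G,\vH)/2^{(p-1)n/p - \Omega(n)} \geq 2^{t(n-1,p)+m+a}$, giving $q=1$. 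Or (Case 2) the obstruction is genuinely spread over all of $V(M)$ — i.e., deleting a single vertex doesn't suffice — in which case I delete all of $V(M)$ at once (so $q = |V(M)|$): removing $V(M)$ destroys every copy of $M$ in $V_1$, and more to the point the edges incident to $V(M)$ were so constrained in any $\vH$-free relevant orientation (each such vertex almost-pure, by the dichotomy) that the map "restrict to $G\setminus V(M)$" is at most $2^{|V(M)|\cdot a'}$-to-one onto a set of $\vH$-free orientations of $G\setminus V(M)$, so $G\setminus V(M)$ has at least $D(G,\vH)/2^{|V(M)|a'} \geq 2^{t(n-|V(M)|,p)+m+a}$ of them once we absorb the Tur\'an-count difference, using that $n$ is large.

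The main obstacle I anticipate is making Case 1 versus Case 2 quantitatively clean: I need the almost-purity constant $a'$ to be controllable independently of $n$ (this is where $\delta < \frac{1}{8k}$ in Lemma \ref{embedding_lem_dir} and the finiteness of $\cM(H)$ matter), and I need the bookkeeping "number of forced vs.\ free edges at the deleted vertices" to beat the loss $t(n,p) - t(n-q,p)$ with room $a$ to spare. The cleanest route is probably: show directly that in \emph{every} relevant $\vH$-free orientation, the total number of cross-edges at $V(M)$ oriented "the wrong way" is at most some constant $C = C(\vH)$; then a dyadic-pigeonhole over the $\binom{|E(V(M), V\setminus V(M))|}{\le C}$ choices of the wrong-way set shows some fixed such set is used by a $2^{-O(\log n)}$-fraction, and after deleting $V(M)$ (or just one of its vertices if one vertex carries all the "freedom") the remaining orientations number at least $D(G,\vH)\cdot 2^{-O(\log n)} \cdot 2^{-(t(n,p)-t(n-q,p))}$, which exceeds $2^{t(n-q,p)+m+a}$ for $n$ large since $D(G,\vH) = 2^{t(n,p)+m}$ and $\log n = o(n)$. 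I would carry out the embedding step (the contradiction to $\vH$-freeness establishing the constant bound $C$) first, since everything else is counting built on top of it.
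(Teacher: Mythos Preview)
Your proposal has a genuine gap at its central step. You claim that in any relevant $\vH$-free orientation each $u\in V(M)$ must send all but $O(1)$ of its edges to $V_2\cup\dots\cup V_p$ in a single direction, and you justify this via the pure-star hypothesis combined with the oriented copy of $M$ in $V_1$. But the star hypothesis does not give this. That $S_{t+1}\in\cM(H)$ means $H\subseteq S_{t+1}'\otimes K_{p-1}(t)$, so in any witnessing embedding the star center $w$ and its $t$ leaves all lie in the \emph{same} part of the blow-up; to use $u$ as the image of $w$ you would need $u$ to have $t$ correctly-oriented neighbors \emph{inside} $V_1$, not in the other parts. In the typical situation vertices of $M$ have only $O(\delta n)$ neighbors in $V_1$, so no such star embedding is available and your almost-purity conclusion does not follow. (You also cannot invoke $M$ and $S_{t+1}$ simultaneously for a single copy of $\vH$: they are separate members of $\cM(H)$, each giving its own embedding of $H$.) Consequently your downstream counting, which relies on an $O(1)$-to-one or $2^{O(\log n)}$-to-one restriction map, collapses.

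The paper proceeds quite differently. It first handles two structural cases that have nothing to do with $M$: if some vertex $v$ has at least $10^k\delta n$ neighbors in \emph{every} part (including its own), then the star argument legitimately applies to $v$---it has room for the leaves in its own part---and forces $(v,V_i)$ to be $10^k\delta n$-almost pure for at least two indices $i$, so one deletes $v$ ($q=1$); if instead some vertex has too few cross-neighbors, its total degree is already below $(1-\tfrac1p)n$ by a linear margin and again $q=1$. Only once both of these fail does $M$ enter, and there the constraint obtained is much weaker than almost-purity: for the common neighborhood $N_i$ of $V(M)$ in some $V_i$ ($i\ge 2$), some sign-pattern class $N_i^{\vec x}=\{w\in N_i:\ (w,v_j)\in E(\vG)\text{ iff }x_j=0\}$ has size at most $10^k\delta n$. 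This only cuts the number of ways to orient the $V(M)$--$V_i$ edges from $2^{rn/p}$ down to roughly $(2^r-1)^{n/p}$, a saving that is linear in $n$ but with a small constant---nowhere near your $O(1)$---yet still enough to beat $t(n,p)-t(n-r,p)\approx r(1-\tfrac1p)n$ by any prescribed additive constant $a$, giving $q=|V(M)|$.
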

By close inspection of the proof of Lemma \ref{general_thm_lem} we note that $a$ may be replaced with any sublinear function. Before proceeding with proof, we prove Theorems \ref{general_graph_theorem} and \ref{general_gph_finite_thm} assuming Lemma \ref{general_thm_lem}. The technique used in the following proofs is similar to that used to prove Theorem 1.1 in \cite{bucic2023} and Theorem 1.1 in \cite{Alon2006}.

\begin{proof}[Proof of Theorem \ref{general_graph_theorem}]
As noted before, the lower bound is immediate. Let $n_0$ be the value $n$ obtained from Lemma \ref{general_thm_lem} and let $G$ be a graph on $n>n_0^2 + n_0$ vertices such that $D(G, \vH) = D(n, \vH)$.  Suppose for contradiction that $G$ has  
$$2^{t(n, p) + p\ex(n, \cM'(\vH)) + m}$$
many $\vH$-free orientations for some $m>0$. Let $V_1, \hdots, V_p$ be an optimal $p$-partition of $G$. Then since, in particular, $|E(G)|\geq t(n, p) + p\ex(n, \cM'(\vH)) + m$, there is a part (say without loss of generality $V_1$) and $M\in \cM'(\vH)$ such that $M\subseteq G[V_1]$. Let $t = \max\{|V(L)|:L\in \cM'(\vH)\}$. By Lemma \ref{general_thm_lem}, for some $q>0$, there are $q$ vertices $v_1, \hdots, v_q$ such that $G\setminus\{v_1, \hdots, v_q\}$ has at least 
$$2^{t(n-q, p) + p\ex(n, \cM'(\vH)) +m+t}$$ 
many $\vH$-free orientations. Further note that this implies $G\setminus\{v_1, \hdots, v_q\}$ has at least $t(n-q, p) + p\ex(n, \cM'(\vH)) +m+t$ many edges, and therefore we may apply Lemma \ref{general_thm_lem} again, this time to $G\setminus\{v_1, \hdots, v_q\}$. Since $G$ has $n>n_0^2 + n_0$ many vertices, we may repeat this deletion process until we obtain a graph $G'$ with $l\geq n_0$ vertices and at least $2^{t(l, p) + p\ex(n, \cM'(H)) + l^2}\geq 2^{l^2}$ many $\vH$-free orientations. This implies that $G'$ has at least $l^2$ edges, a contradiction.
 
\end{proof}

Next we prove Theorem \ref{general_gph_finite_thm}.

\begin{proof}[Proof of Theorem \ref{general_gph_finite_thm}]
    Let $n_0$ be the value $n$ obtained from Lemma $\ref{general_thm_lem}$. Let $G$ be a graph on $n>n_0^2 + n_0$ vertices which maximizes $D(G, \vH)$. Suppose $\ex(n, \cM'(\vH))$ is a constant $a$ for large enough $n$. Note that $G$ has at least $2^{t(n, p)}$ many $\vH$-free orientations, and therefore at least $t(n, p)$ edges. Let $V(G) = V_1\cup \hdots\cup V_p$ be an optimal $p$-partition of $G$. Suppose for contradiction that $\sum_{i\in [p]}|E(V_i)|>pa$. Then there is $M\in \cM(\vH)$ such that $M\subseteq G[V_i]$ for some $i$. Thus we may apply Lemma \ref{general_thm_lem} with $a = p\ex(n, \cM'(\vH)) + 2$ to obtain a graph $G\setminus\{v_1, \hdots, v_q\}$ for some $q\leq \max\{|V(M)|: M\in \cM'(\vH)\}$ with at least $2^{t(n-q, p) + p\ex(n, \cM'(\vH))+2}$ many $\vH$-free orientations. Therefore, we may proceed as in the proof of Theorem \ref{general_graph_theorem}, repeatedly applying Lemma \ref{general_thm_lem} until we obtain a graph with $l\geq n_0$ vertices and at least $l^2$ edges, a contradiction.
\end{proof}

We may now proceed to the proof of Lemma \ref{general_thm_lem}. 

\begin{proof}[Proof of Lemma \ref{general_thm_lem}]
    Suppose $\vH$ has $k$ vertices.  Fix $\delta>0$ sufficiently small and let $n_0$ be the value obtained by applying Lemma \ref{main_lemma} with the parameter $\delta^2$ in place of $\delta$.  Let $G$ be a graph with $n\geq n_0$ vertices and suppose $G$ has exactly $2^{t(n,p)+m}$ many $\vH$-free orientations for some $m>0$.  Let $V(G) = V_1\cup \hdots \cup V_p$ be an optimal $p$-partition of $G$.  Note that by Lemma \ref{main_lemma}, $|E(V_1)| +\hdots + |E(V_p)|\leq \delta^2n^2$, and consequently we have at most $\delta^2n^2$ edges missing between $V_i$ and $V_j$ for any $i\neq j\in [p]$.  In addition, it is not difficult to show that 
    $$n(\frac{1}{p}-\delta)\leq |V_1|,\hdots, |V_p|\leq n(\frac{1}{p}+\delta).$$

By Lemma \ref{relevant_orientations_lem}, there are at most $2^{t(n, p)-1}$ many non-relevant orientations of $G$, and therefore there are at least $2^{t(n,p)+m-1}$ many relevant, $\vH$-free orientations of $G$. We now analyze three cases.

\begin{case} Assume there is a vertex $v\in V(G)$ such that $v$ has at least $10^k\delta n$ neighbors in $V_i$, i.e. $|N_G(v)\cap V_i|\geq 10^k\delta n$, for each $i\in [p]$.
\end{case}
We will show that there is some $c<1-\frac{1}{p}$ such that $G\setminus\{v\}$ has at least 
$$2^{t(n,p)+m - 1- cn} \geq 2^{t(n-1,p)+m+a}$$ 
many $\vH$-free orientations, where the inequality holds since $n$ is large compared to $a$. Fix a relevant orientation $\vG$ of $G$. We claim that if $\vG$ is $\vH$-free, then there are at least two parts $V_i, V_j$ such that $(v, V_i)$ and $(v, V_j)$ are $10^k\delta n$-almost pure. Suppose towards a contradiction that there is a unique part, $V_j$, such that for all $i\neq j$, $v$ has at least $10^k\delta n$ in-neighbors and $10^k \delta n$ out-neighbors in $V_i$. Suppose without loss of generality that $j = 1$. Further, without loss of generality, suppose that $v$ has more out-neighbors in $V_1$ than in-neighbors in $V_1$. For each $i \in [p]$, define $N_i = N_{V_i}(v)$. Let $N_{i}^{0} = \{x\in N_{i}: (x, v)\in E(\vG)\}$ and $N_{i}^{1} = \{x\in N_{i}:(v,x)\in E(\vG)\}$ i.e. $N_i^0$ is the out-neighbors of $v$ in $V_i$ and $N_i^1$ is the in-neighbors of $v$ in $V_i$. Then by assumption $|N_i^{0}|\geq 10^k\delta n$ for all $i\geq 1$ and $|N_i^{1}|\geq10^k\delta n$ for all $i\geq 2$. 

Recall that $\cM(\vH)$ contains both pure orientations of a $(t+1)$-star, $S_{t+1}$. Note that any proper 2-coloring of $S_{t+1}$ has one color for the center vertex and every other vertex is the other color. Thus, there is a vertex $w$ of $\vH$ such that $\vH\setminus\{w\}$ is $p$-colorable. Let $c:V(\vH)\to [p+1]$ be a coloring of $\vH$ such that the following holds: $w$ is the unique vertex of color $p+1$, and $w$ has exactly $t$ neighbors of color $1$ which, along with $w$, form a pure $(t+1)$-star. Since both pure orientations of $S_{t+1}$ are in $\cM(\vH)$, we may choose this $S_{t+1}$ to be such that $w$ has only out-neighbors in $\vH[S_{t+1}]$. Note that $c|_{V(\vH)\setminus\{w\}}$ only uses colors $1$ through $p$. We define a new coloring $c':V(\vH)\setminus\{w\}\to [p]\times \{0,1\}$ as follows: for $x\in V(\vH)\setminus\{w\}$, if $x$ is an out-neighbor of $w $, then $c'(x) = (c(x), 0)$, otherwise, $c'(x) = (c(x), 1)$. Note this is a proper coloring of $\vH$. Also, if $x,y\in V(\vH)\setminus\{w\}$ are such that $c'(x) = (i, 0)$ and $c'(y) = (i, 1)$, then there is no edge between $x$ and $y$ in $\vH$.  Thus, by Lemma \ref{embedding_lem_dir}, since our fixed orientation $\vG$ of $G$ is relevant, we may find a copy of $\vH\setminus\{w\}$ in $N(v)$ such that if $c'(x) = (i, 0)$, then $x$ lies in $N_i^{0}$ and if $c'(x) = (i, 1)$, then $x$ lies in $N_i^{1}$. By construction, this subgraph can be extended to a copy of $\vH$ by adding the vertex $v$ as the center of $S_{t+1}$. Therefore, we see that any relevant, $\vH$-free orientation of $G$ must have the property that there are at least two parts, $V_i, V_j$ such that $(v, V_i)$ and $(v, V_j)$ are both $10^k\delta n$-almost pure. Therefore, we have the following:
\begin{itemize}
    \item There are at most $2^{(p-2 +\delta)\frac{n}{p}}$ many ways to orient the edges between $v$ and $V_k$ $k\notin \{i, j\}$.
    \item There are at most 
    $$2\sum_{i=1}^{10^k\delta n}\binom{n}{i}$$
    ways to orient the edges between $v$ and each of $V_i$ and $V_j$.
    \item There are $\binom{p}{2}$ ways to choose $i, j$.
\end{itemize}
Thus, putting this together, the number of ways to orient the edges incident to $v$ is at most
$$2^{(p-2 +\delta)\frac{n}{p}}4\binom{p}{2}\left(\sum_{i=1}^{10^k\delta n}\binom{n}{i}\right)^2\leq 2^{cn}.$$
 For some $c<(1-\frac{1}{p})$.  Consequently, the number of $\vH$-free orientations of $G\setminus\{v\}$ is at least 
$$2^{t(n,p)+m - 1- cn} \geq 2^{t(n-1,p)+m+a}$$ 
as desired.
\begin{case}
    Assume we are not in Case 1 and there is $v\in V_i$ such that $v$ has fewer than $n(1-1/p)-2(10^k)\delta n$ neighbors in $\bigcup_{j\neq i}V_j$.
\end{case}   
Since we are not in Case 1 and since our partition $V_1\cup \hdots\cup V_p$ is optimal, we may assume that every vertex in $V(G)$ has at most $10^k\delta n$ neighbors in its own part.  Then 
$$d(v)\leq 10^k\delta n +n(1-\frac{1}{p})-2(10^k)\delta n\leq n(1-\frac{1}{p})-0.5(10^k)\delta n.$$
Therefore, the number of ways to orient the edges incident to $v$ is at most $2^{n(1-\frac{1}{p})-0.5(10^k)\delta n} \leq 2^{cn}$ for some $c<1-\frac{1}{p}$.  Thus the number of $\vH$-free ways to orient $G\setminus\{v\}$ is at least
$$2^{t(n,p)+m-1 - cn} \geq 2^{t(n-1,p)+m+a}$$
as desired.
\begin{case}
    Assume we are not in Cases 1 or 2.
\end{case} 
By assumption, there is a copy of some $M\in \cM'(\vH)$ contained in $G[V_i]$ for some $i\in[p]$, without loss of generality say $G[V_1]$. Suppose $M$ has $r$ vertices $v_1, \hdots, v_r$. We will show that for some $c<(1-\frac{1}{p})$, $G\setminus M$ has at least 
$$2^{t(n,p)+m - 1- crn} \geq 2^{t(n-r,p)+m+a}$$ 
many $\vH$-free orientations. 

Since $M\in\cM'(\vH)$, for any orientation $\vM$ of $M$, there are vertices $w_1, \hdots, w_r\in V(\vH)$ such that $\vH[\{w_1, \hdots, w_r\}]\cong \vM$ and $\vH\setminus \{w_1, \hdots, w_r\}$ is $p$-colorable. Fix a relevant orientation $\vG$ of $G$. Let $\vM$ be the orientation of $M$ found as a subgraph of $\vG$.  Let $c: V(\vH)\to [p]$ be a proper coloring of $H$ such that $c(V(\vH\setminus \vM))\subseteq [p]$ and $c(V(\vM)) = \{1, p+1\}$.  Let $A$ be the neighborhood of $\{v_1, \hdots, v_r\}$ in $G$; that is, $A = \bigcap_{i=1}^rN(v_i)$.  Then because we are not in Case 2, $|A|\geq n((1-\frac{1}{p})-2(10^k)r\delta)$. In particular, $|A\cap V_i| \geq \frac{n}{p}-2(10^k)r\delta n$ for each $i\neq1$. 

Define $N_i = A\cap V_i$ for each $i\neq 1$.  We denote by $2^{[r]}$ the set 
$$\{\vec{x}=(x_1, \hdots, x_r): x_i\in\{0,1\}, i\in [r]\}.$$ For each vector $\vec{x} = (x_1, x_2, \hdots, x_r)\in 2^{[r]}$, and each $i$, define $N_i^{\vec{x}}$ to be the set of vertices $u\in N_i$ such that $(u,v_j)\in E(\vG)$ if and only if $x_j = 0$ (and so if $x_j = 1$, then either there is no edge between $u$ and $v_j$ or $(v_j, u)\in E(\vG)$). We claim that there is at least one $i\in \{2,\ldots, p\}$ and one $\vec{x}\in 2^{[r]}$ such that $|N_i^{\vec{x}}|\leq 10^k\delta n$. Suppose for contradiction that for each $i\in [p]$ and each $\vec{x}\in 2^{[r]}$ that $|N_i^{\vec{x}}|\geq 10^k\delta n$. Then we define a new coloring $c':V(\vH\setminus\vM)\to[p]\times 2^{[r]}$ by $c'(u) = (c(u), \vec{x})$  where $\vec{x}$ is such that $x_j = 0$ whenever $(u, v_j)$ is an edge of $\vH$ and $x_j = 1$ otherwise. Then by a similar strategy to that used in Case 1, we note that this is a proper coloring of $\vH\setminus\vM$, and if $c'(u) = (i, \vec{x})$ and $c'(v) = (i, \vec{y})$ then there is no edge between $u$ and $v$. By Lemma \ref{embedding_lem_dir}, we can find a copy of $\vH\setminus\vM$ in $\vG\setminus \vM$ such that if $c'(v) = (i, \vec{x})$, then $v$ lies in $N_i^{\vec{x}}$. By construction, this can be extended to a copy of $\vH$ by adding in the vertices $v_1, \hdots, v_r$. Thus, there must be at least one $i\in \{2, \hdots, p\}$ and one $\vec{x}\in 2^{[r]}$ such that $|N_i^{\vec{x}}|\leq 10^k\delta n$. 

We count the number of ways to orient the edges incident to some vertex in $\vM$. Say we've chosen $N_i^{\vec{x}}$ to have size at most $10^k\delta n$. Then:
\begin{itemize}
    \item There are at most $2^{10^kr\delta n}$ many ways to orient the edges adjacent to $M$ that live entirely within $V_1$.
    \item There are at most $2^{rn(1-\frac{2}{p})}$ many ways to orient the edges between $M$ and $\bigcup_{j\in [p]\setminus \{1, i\}}V_j$.
    \item For some $0<b<1$, there are at most 
$$\sum_{j=0}^{10^k\delta n}\binom{\frac{n}{p}}{j}(2^r-1)^{\frac{n}{p}-j}\leq 10^k\delta n \binom{\frac{n}{p}}{10^k\delta n}(2^r-1)^{\frac{n}{p}-10^k\delta n}< 2^{\frac{brn}{p}}$$
many ways to orient the edges between $M$ and $V_i$.
\end{itemize}
 Further, there are $2^rp$ ways to choose $N_i^{\vec{x}}$. Thus, the total number of ways to orient the edges incident to a vertex in $M$ is at most
$$2^{10^kr\delta n}2^{rn(1-\frac{2}{p})}2^rp2^{\frac{brn}{p}} < 2^{crn}$$
for some constant $c<(1-\frac{1}{p})$.  Therefore, the number of $\vH$-free ways to orient the graph $G\setminus M$ is at least 
$$2^{t(n,p)+m - 1- crn} >2^{t(n-r,p)+m+a}$$ 
as desired.
\end{proof}

\section{Results for $(k, r)$-Fans}
In this section, we prove Corollary \ref{kfans_cor} and Theorems \ref{k-fans_thm_1} and \ref{k-fans_thm_2}. Recall that a $(k, r)$-fan, denoted $F_{k,r}$ is $k$ copies of $K_r$ all joined at a single vertex. We call the vertex common to all copies of $K_r$ in $F_{k, r}$ the \textit{center} of $F_{k,r}$.  If $K$ is a specific copy of $K_r$ in $F_{k,r}$, it will be useful to refer to the center of $K$, meaning the distinguished vertex of $K$ which, when considered as part of $F_{k, r}$ is the center of $F_{k,r}$. By \cite{CHEN2003159}, for all $k\geq 1$ and $r\geq 2$, $\cM(F_{k, r})$ is comprised of a $(k+1)$-star and a $k$-matching. We prove Corollary \ref{kfans_cor}.

\begin{proof}[Proof of Corollary \ref{kfans_cor}]
   Let $v$ be the center vertex of $F_{k, r}$. Let $\vF_{k, r}$ be an anti-directed orientation of $F_{k,r}$, i.e., one such that each copy of $K_r$ has at least one edge entering $v$ and one edge exiting $v$. One can check that we have chosen our orientation of $F_{k, r}$ such that $\cM(\vF_{k, r}) = \overrightarrow{\cM(F_{k, r})}$.  Thus we may apply Theorem \ref{general_graph_theorem} to obtain Corollary \ref{kfans_cor} immediately. 
\end{proof}

Recall that we have denoted
$$h(k) = \begin{cases}
    k^2-k & \text{ if $k$ is odd}\\
    k^2 -\frac{3}{2}k & \text{ if $k$ is even}.
    \end{cases}$$
and by \cite{CHEN2003159}, we have $\ex(n, F_{k, r}) = t(n, r-1) +h(k)$.

Before we proceed to the proof of Theorems \ref{k-fans_thm_1} and \ref{k-fans_thm_2}, we prove a general lemma, which will be crucial in the following proofs.

\begin{lemma}
\label{parts_lem}
    Let $\vH$ be a directed graph satisfying the assumptions of Theorem \ref{general_gph_finite_thm} with $\chi(\vH) = p+1$. Let $G$ be a graph maximizing $D(G, \vH)$ and $V(G) = V_1\cup \hdots \cup V_p$ an optimal $p$-partition of $G$.  Then for all $i\in [p]$ and all $M\in \cM'(\vH)$, $M\not\subseteq G[V_i]$.
\end{lemma}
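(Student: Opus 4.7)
The plan is to suppose, for contradiction, that some $M\in \cM'(\vH)$ is contained in $G[V_i]$ for some $i\in [p]$, and to derive a contradiction by combining a single application of Lemma \ref{general_thm_lem} with the global upper bound from Theorem \ref{general_graph_theorem}. The heuristic is that Lemma \ref{general_thm_lem} says such an $M$ sitting inside a part allows us to delete a bounded number of vertices and still obtain a graph whose number of $\vH$-free orientations beats the Tur\'an count on $n-q$ vertices by any prescribed constant; but Theorem \ref{general_graph_theorem} caps that same count by $2^{t(n-q,p)+pa_0}$, where $a_0:=\ex(n,\cM'(\vH))$ is a fixed constant by the hypotheses of Theorem \ref{general_gph_finite_thm}. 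If we set the lemma's free parameter to exceed $pa_0$, the two bounds are in conflict.

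In more detail: since $T(n,p)$ is $p$-partite and $\chi(H)=p+1$, every orientation of $T(n,p)$ is $\vH$-free, so $D(n,\vH)\geq 2^{t(n,p)}$. Writing $D(G,\vH)=2^{t(n,p)+m}$ we therefore have $m\geq 0$. Apply Lemma \ref{general_thm_lem} to $G$, the fixed optimal partition, and the assumed $M\subseteq G[V_i]$, choosing the free parameter to be $pa_0+1$. Since $pa_0+1$ depends only on $\vH$, the ``large $n$'' hypothesis of the lemma is satisfied, and we obtain $q\leq \max\{|V(M')|:M'\in \cM'(\vH)\}$ (a constant) and vertices $v_1,\dots,v_q$ with
$$ D(G\setminus\{v_1,\dots,v_q\},\vH)\geq 2^{t(n-q,p)+m+pa_0+1}. $$
On the other hand, $G\setminus\{v_1,\dots,v_q\}$ has $n-q$ vertices, so Theorem \ref{general_graph_theorem} gives
$$ D(G\setminus\{v_1,\dots,v_q\},\vH)\leq D(n-q,\vH)\leq 2^{t(n-q,p)+pa_0}. $$
Comparing exponents yields $m+1\leq 0$, contradicting $m\geq 0$.

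The main thing to be careful about is threading the various ``sufficiently large $n$'' thresholds of Lemma \ref{general_thm_lem} and Theorem \ref{general_graph_theorem} uniformly; because $q$ and $pa_0+1$ are bounded by constants depending only on $\vH$, a single choice of $n$ suffices. A minor wrinkle is that Lemma \ref{general_thm_lem} is stated under $m>0$, whereas we only have $m\geq 0$; however, $m$ appears in the lemma's proof solely as an additive offset in the exponent, so the case $m=0$ goes through by the same argument. No iteration of Lemma \ref{general_thm_lem} is needed here (in contrast to the proof of Theorem \ref{general_gph_finite_thm}), because the upper bound from Theorem \ref{general_graph_theorem} is already sharp enough to contradict a single application.
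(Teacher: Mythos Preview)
Your argument is correct, but it is genuinely different from the paper's proof. The paper argues directly: using Theorem~\ref{general_gph_finite_thm} it first learns that $\sum_i|E(V_i)|\leq pa$, so almost no cross-edges are missing and each part has nearly the right size; it then fixes a copy of $M\subseteq G[V_1]$, carves the common neighbourhood $\bigcap_j N(v_j)$ into $\Theta(n)$ disjoint blocks each inducing $K_p(k)$, and shows by independence that a uniformly random orientation produces some $\vH$ through $M$ with probability exceeding $1-2^{-(pa+1)}$, so $D(G,\vH)<2^{t(n,p)}$. Your route instead feeds the hypothetical $M$ into Lemma~\ref{general_thm_lem} with parameter $pa_0+1$ and then invokes the global upper bound of Theorem~\ref{general_graph_theorem} on $n-q$ vertices to get an immediate exponent contradiction.

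Your approach is shorter and avoids a fresh probabilistic computation; in fact it shows that Lemma~\ref{parts_lem} (and hence Theorem~\ref{general_gph_finite_thm}) follows from a \emph{single} application of Lemma~\ref{general_thm_lem} once Theorem~\ref{general_graph_theorem} is in hand, rather than an iteration. The paper's approach, on the other hand, introduces the ``partition-into-disjoint-$K_p(k)$'s and exploit independence'' technique that is reused verbatim in the proofs of Theorems~\ref{k-fans_thm_1} and~\ref{k-fans_thm_2}, so its appearance here has some expository value. Your handling of the $m>0$ versus $m\geq 0$ hypothesis in Lemma~\ref{general_thm_lem} is fine: as you note, $m$ enters the proof only additively, and indeed the paper's own proof of Theorem~\ref{general_gph_finite_thm} tacitly applies Lemma~\ref{general_thm_lem} with $m\geq 0$ as well.
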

\begin{proof}
    Suppose $\vH$ has $k$ vertices. Let $G$ be a graph on $n$ vertices such that  $D(G, \vH) = D(n, \vH)$, and let $V(G) = V_1\cup\hdots\cup V_p$ be an optimal $p$-partition of $G$. Note that since $\ex(n, H) \geq t(n, p)$, $G$ has at least $t(n, p)$ edges, and by Theorem \ref{general_gph_finite_thm}, $\sum_{i\in [p]}|E(G[V_i])|\leq pa$. Therefore, there are at most $pa$ edges missing from between the parts $V_1, \hdots, V_p$.  Further it is easy to show that $|V_i| = \frac{n}{p}\pm pa$ for all $i\in [p]$.
    
    Suppose for contradiction that there is $M\in \cM'(\vH)$ such that $M\subseteq G[V_1]$. For each $i\in [p]$, let $A_i$ be the set of vertices $v\in V_i$ such that for all $w\in V_j$ with $j\neq i$, we have $vw\in E(G)$. Label the vertices $V(M)\subseteq V_1$ by $v_1, \hdots, v_q$.  For each $i\in \{2, \hdots, p\}$, let 
    $$N_i = A_i\cap \bigcap_{j=1}^qN(v_j)$$
    and let $N_1 = A_1\setminus\{v_1, \hdots, v_q\}$. Then since there are at most $pa$ edges missing between parts, $|N_i|\geq \frac{n}{p}-4pa$ for each $i\in [p]$. We partition each set $N_i$ into parts $N_i = S_i^0\cup S_i^1\cup \hdots\cup S_i^t$ so that $S_i^j$ has size $k$  for $j\geq 1$ and $|S_i^0|<k$. Note $t\in \{\lfloor\frac{n}{pk}\rfloor, \lceil\frac{n}{pk}\rceil\}$. Then for each $j\in [t]$, $G[\bigcup_{i=1}^pS_i^j]\cong K_p(k)$.  Since $H\setminus M$ is $p$-colorable, $H\setminus M$ is contained as a subgraph in $\bigcup_{i=1}^pS_i^j$ for all $j\in [t]$. Further, by construction, each of these copies of $H\setminus M$ can be extended to $H$ by adding the vertices $v_1, \hdots, v_q$.  

    Choose an orientation $\vG$ of $G$ uniformly at random. Let $\vM$ be the orientation of $M$ in $\vG$. Then since $M\in \cM'(\vH)$, regardless of the orientation $\vM$, there is a way to extend $\vM$ to $\vH$. In particular for each $j\in t$, the probability that $\vG[\{v_1, \hdots, v_q\}\cup \bigcup_{i=1}^{p}S_{i}^j]$ does not contain a copy of $\vH$ is at most 
    $$1-\frac{1}{2^{|E(\vH)|-|E(\vM)|}}.$$
    Therefore, the probability that $\vG$ does not contain a copy of $\vH$ is at most 
    $$\left(1-\frac{1}{2^{|E(\vH)|-|E(\vM)|}}\right)^t< \frac{1}{2^{pa+1}}$$
    for large enough $n$. Therefore the number of $\vH$-free orientations of $G$ is at most
    $$\frac{1}{2^{pa+1}}2^{|E(G)|}\leq \frac{1}{2^{pa+1}}2^{t(n, p) + pa} <2^{t(n,p)}$$
    a contradiction.
\end{proof}

Next, we prove Theorem \ref{k-fans_thm_1}.

\begin{proof}[Proof of Theorem \ref{k-fans_thm_1}]
 Let $n$ be the value obtained from Corollary \ref{kfans_cor}, and let $G$ be a graph on $n$ vertices such that $D(G, \vF_{2,r}) = D(n, \vF_{2,r})$. Since $\ex(n,F_{2,r}) = t(n, r-1) + 1$, we have $D(G, \vF_{2,r})\geq 2^{t(n, r-1) + 1}$. Suppose for contradiction that $D(G, \vF_{2,r})> 2^{t(n, r-1) + 1}$. Note this implies that $|E(G)|>t(n, r-1)+1$. Recall that $\cM'(\vF_{2, r})$ consists of a 2-matching and a 3-star. Thus 
 $$\ex(n, \cM'(\vF_{2, 3})) = 1.$$
 Therefore, by Corollary \ref{kfans_cor} we have 
    $$D(n, \vF_{2, r})\leq2^{t(n, r-1) + (r-1)},$$
    and in particular, by Lemma \ref{parts_lem}, there is a partition $V(G) = V_1\cup\hdots\cup V_{r-1}$ such that there is at most one edge entirely contained in $V_i$ for each $i\in [r-1]$. Suppose that there is exactly one edge in each of $V_1, \hdots, V_\ell$ and no edges in $V_{\ell + 1},\hdots, V_{r-1}$. Call these edges $w_1x_1, \hdots, w_{\ell}x_{\ell}$.  We will show that $G$ has fewer than $2^{t(n, r-1) + 1}$ many $\vF_{2, r}$-free orientations, a contradiction.  
    
    We now make a few observations about the structure of $G$.  First note that $G$ has more than $t(n, r-1) + 1$ edges only if there are at most $\ell-2$ edges missing in total from between any parts $V_i$ and $V_j$. Indeed, suppose $G$ has exactly $t(n, r-1) + s$ edges.  Then $2\leq s\leq \ell $. Further, for each $i\in [r-1]$, we have $|V_i|\geq\frac{n}{r-1}-\ell$ since otherwise $G$ would have fewer than $t(n, r-1) + 1$ edges. For each $i\in [r-1]$, let $A_i$ be the set of vertices $v\in V_i$ such that $v$ has an edge to every vertex $u\in V_j$ for $j\neq i$ and and no edges to other vertices in $V_i$. In other words
    $$A_i = \{v\in V_i: \forall u\in V(G)\setminus V_i, \,uv\in E(G)\}\setminus\{w_i, x_i\}$$
    Note that since there are at most $\ell -2$ edges missing from between all parts $V_i$ and $V_j$ and since each such missing edge is ``incident" to only two vertices, we have  $|A_i| \geq |V_i|-\ell\geq \frac{n}{r-1}-2\ell$ for each $i$.  
    
    Since $G$ has $t(n, r-1)+s$ edges, it suffices to show that for an orientation $\vG$ of $G$ chosen uniformly at random, the probability that $\vG$ does not contain a copy of $\vF_{2, r}$ is less than $\frac{1}{2^{s-1}}$ as this implies that $G$ has fewer than
    $$\frac{1}{2^{s-1}}2^{|E(G)|}= \frac{1}{2^{s-1}}2^{t(n, r-1)+s} = 2^{t(n, r-1)+1}$$
    many $\vF_{2, r}$-free orientations.
    
    By definition, in our directed graph $\vF_{2, r}$, we have two oriented copies of $K_r$ as subgraphs.  Denote these by $\vK_r^1$ and $\vK_r^2$. We consider the quadruples of vertices $\{w_i, x_i, w_j, x_j\}$ for $i\neq j\in [\ell]$.  There are $\binom{\ell}{2}$ such quadruples and since there are at most $\ell-2$ edges missing from between the parts $V_1, \hdots, V_{r-1}$, at least $\binom{\ell}{2}-\ell+2$ of them form a copy of $K_4$ in $G$. One may easily check that $\binom{\ell}{2}-\ell+2\geq s-1$ since $s\leq \ell$. 
    
    We now restrict our attention to one such copy of $K_4$. Let $X \defeq \{w_1, x_1, w_2, x_2\}$ and without loss of generality say $G[X]\cong K_4$. Since our orientation $\vF_{2, r}$ is anti-directed, there is at least one edge entering our center vertex $v$ and at least one edge exiting $v$ in each copy of $K_r$. Therefore each of $\vK_r^1$ and $\vK_r^2$ contains a triangle of one of the following forms:
    \begin{itemize}
        \item $\{(v,y), (y, z), (z, v)\}$
        \item $\{(v,y), (z, y), (z, v)\}$
    \end{itemize} 
    for some vertices $y, z$ from $\vF_{2, r}$. 
    
    Fix such a triangle in $\vK_r^1$, and call it $\vT$. Let $\vG$ be an orientation of $G$ chosen uniformly at random. We wish to find a copy of $\vT$ in $\vG[X]$ that includes the edges $(u, w_i)$ and $(x_i, u)$ for some $i\in [2]$ where $u\in \{w_{3-i}, x_{3-i}\}$. By listing the 64 possibilities of $G[X]$ and counting we see the probability that we have such a copy of $\vT$ is $5/8$. Thus, the probability that a random orientation of $G[X]$ does not contain such a copy of $\vT$ is $3/8$.  
    
    Now, suppose $\vG[X]$ does contain such a copy of $\vT$. Suppose without loss of generality that the vertices of this $\vT$ are $w_1x_1w_2$. Let $t = \min_{i}|A_i|$. We partition 
    $$\bigcup_{i = 3}^{r-1}A_i = S_0\cup S_1\cup \hdots \cup S_t$$
    into disjoint copies of $K_{r-3}$ where each part $S_i$ consists of exactly one vertex from each part $A_3, \hdots, A_{r-1}$ for all $i\in[t]$ and let $S_0$ be the remaining vertices. Note that by the definition of $A_i$, $G[\bigcup_{i = 3}^{r-1}A_i]$ is a complete $(r-3)$-partite graph. Further, by our bound on $|A_i|$ we can guarantee we have $t = \min_{i}|A_i|\geq \frac{n}{r-1}-2\ell$. By construction, for each $i\in [t]$, we have $G_i \coloneqq G[S_i\cup \{w_1, x_1, w_2\}]\cong K_r$. Further, by our choice of $\vT$, there is at least one orientation $\vG_i$ of $G_i$ and a directed graph isomorphism $\phi:\vG_i\to \vK_r^1$ such that $\vG_i[\{w_1, x_1, w_2\}] \cong \vT$ and $\phi(w_2) = v$, where $v$ is the distinguished center vertex of $\vK_r^1$. 
    
    There are $\binom{r}{2}$ edges in $G_i$ for each $i\in[2]$ and so there are $2^{\binom{r}{2}}$ ways that each $G_i$ could be oriented. Having already assumed that $\vG[\{w_1, x_1, w_2\}] \cong \vT$ via $\phi$, we know that $\vG[\{w_1, x_1, w_2\}] = \vG_i[\{w_1, x_1, w_2\}]$ for each $i\in[t]$. Therefore, the probability that $\vG[G_i] = \vG_i$ is 
    $$\frac{1}{2^{\binom{r}{2}-3}}.$$
    Thus, the probability that  $\vG[G_i] \neq \vG_i$ for all $3\leq i\leq t$ is at most
    $$\left(1-\frac{1}{2^{\binom{r}{2}-3}}\right)^{\frac{n}{r-1}-2\ell},$$
    which tends to 0 as $n\to \infty$. 
    
    Now, suppose we have found $i$ such that $\vG[G_i]\cong \vK_r^1$ as described above. Suppose its vertices are $\{w_1, x_1, w_2\}\cup\{v_i:i\in \{3, \hdots, r-1\}\}$ where $v_i\in V_i$ for each $i\in \{3, \hdots, r-1\}$. Let $B_i = A_i\setminus \{v_i\}$ for each $i\in \{3, \hdots, r-1\}$ and let $B_1 = A_1$. Let $t' = \min_i|B_i|$. Then we may partition 
    $$G[B_1\cup\bigcup_{i=3}^{r-1}B_i] = S_1'\cup \hdots\cup S_{t'}'$$ 
    into disjoint copies of $K_{r-2}$, where, again, for $i\geq 1$, $S_i'$ consists of exactly one vertex from each part $B_1, B_3, \hdots, B_{r-1}$. Note that we have $t' = \min_i|B_i|\geq \frac{n}{r-1}-2\ell-1$. By construction, for each $i\in [t']$, we have $G_i' \coloneqq G[S_i'\cup \{w_2, x_2\}]\cong K_{r}$. Further, since our orientation $\vF_{2, r}$ is anti-directed, we know $\vK_r^2$ has both an incoming edge and an outgoing edge incident to its distinguished center vertex $v$, so no matter how $w_2x_2$ is directed, there is at least one orientation $\vG_i'$ of $G_i'$ and a directed graph isomorphism $\sigma: \vG_i'\to \vK_r^2$ as follows: we have $\sigma(w_2) = v$, the distinguished center vertex of $\vK_r^2$, and  $\vG_i'[\{w_2, x_2\}] = \vG[\{w_2, x_2\}]$.
    
    Again, there are $\binom{r}{2}$ edges in $G_i'$ and so there are $2^{\binom{r}{2}}$ ways that each $G_i'$ could be oriented in $\vG$. Thus, regardless of how the edge $w_2x_2$ is oriented, for a given $i\in [t']$ the probability that $\vG[G_i']= \vG_i'$ is at least 
    $$\frac{1}{2^{\binom{r}{2}-1}}.$$
    Therefore, the probability that there is no $i\in [t']$ such that $\vG[G_i'] = \vG_i'$ as above is at most
    $$\left(1- \frac{1}{2^{\binom{r}{2}-1}}\right)^{\frac{n}{r-1}-4\ell-1}.$$
    
    Finally putting this all together, we may summarize the preceding  as follows:
    \begin{enumerate}[label=(\alph*)]
        \item Given a quadruple of vertices $w_i, x_i, w_j, x_j$ with $1\leq i, j \leq r-1$ such that $w_i,x_i\in V_i$, $w_j, x_j\in V_j$ and $G[\{w_i, x_i, w_j, x_j\}]\cong K_4$, with probability $3/8$, $\vG[\{w_i, x_i, w_j, x_j\}]$ does not contain a triangle $\vT$ that includes the edges $(u,w_k)$ and $(x_k, u)$ where $k\in \{i, j\}$ and $u\in \{w_{q}, x_{q}\}$ with $q\in \{i, j\}\setminus\{k\}$.
        \item Assuming that $\vG$ does contain such a copy of $\vT$ as described in (a), the probability that $\vT$ cannot be extended to a copy of $\vK_r^1$ with the distinguished center vertex of $\vK_r^1$ at $u$ is at most
        $$\left(1-\frac{1}{2^{\binom{r}{2}-3}}\right)^{\frac{n}{r-1}-4\ell}.$$
        \item Assuming that $\vG$ does contain such a copy of $\vK_r^1$ as described in (b), the probability that we cannot find a copy of $\vK_r^2$, also centered at $u$ which is otherwise disjoint from $\vK_r^1$ is at most
        $$\left(1- \frac{1}{2^{\binom{r}{2}-1}}\right)^{\frac{n}{r-1}-4\ell-1}.$$
    \end{enumerate}
    
    Now, if there is no copy of $\vF_{2,r}$ centered in $\{w_i,x_i, w_j, x_j\}$, then in the list above, either: 
    \begin{itemize}
        \item (a) holds or 
        \item (a) does not hold but (b) does or
        \item neither (a) nor (b) hold but (c) does.
    \end{itemize}
    
    Therefore, given $i\neq j\in [r-1]$ such that $G[\{w_i,x_i, w_j, x_j\}]\cong K_4$, the probability that there exists no copy of $\vF_{2, r}$ centered in $\{w_i, x_i, w_j, x_j\}$ is at most
    $$\frac{3}{8} +\left(1-\frac{1}{2^{\binom{r}{2}-3}}\right)^{\frac{n}{r-1}-4\ell} +\left(1- \frac{1}{2^{\binom{r}{2}-1}}\right)^{\frac{n}{r-1}-4\ell-1} = \frac{3}{8} +\epsilon(n)< \frac{1}{2}$$
    for large enough $n$.

    Finally, there are at least $\binom{\ell}{2}-\ell+2$ quadruples of vertices $\{w_i,x_i, w_j,x_j\}$ that form a $K_4$.  Since any two copies of $K_4$ formed in this way share at most one edge, the probabilities that each of them contain some triangle $\vT$ contained in $\vK_1^r$ are independent. 
    
    Further, the probability that no copy of $\vT$ can be extended to a copy of $\vF_{2,r}$ is certainly smaller than the probability that a given copy of $\vT$ cannot be extended to $\vF_{2, r}$ which is at most:
    $$\ep \defeq \left(1-\frac{1}{2^{\binom{r}{2}-3}}\right)^{\frac{n}{r-1}-4\ell} +\left(1- \frac{1}{2^{\binom{r}{2}-1}}\right)^{\frac{n}{r-1}-4\ell-1}.$$
    Therefore, the probability that an arbitrary orientation $\vG$ of $G$ does not contain a copy of $\vF_{2, r}$ is at most
    $$\left(\frac{3}{8}\right)^{\binom{\ell}{2}-\ell+2} +\epsilon(n) < \frac{1}{2^{s-1}}$$
    for large enough $n$.
\end{proof}

\begin{proof}[Proof of Theorem \ref{k-fans_thm_2}]
Let $G$ be a graph with $n$ vertices for sufficiently large $n$ such that $D(n, \vF_{3,3}) = D(G, \vF_{3,3})$ and let $V(G) = V_1\cup V_2$ be an optimal partition of $G$. Suppose for contradiction that $e(G)>\ex(n, \vF_{3,3})$. 

Recall that the extremal graph for $F_{3,3}$ consists of a balanced complete bipartite graph with two disjoint copies of $K_{3}$ placed in a single part. By Corollary \ref{kfans_cor} we know that 
$$2^{\floor{\frac{n^2}{4}}+6}\leq D(n, F_{3,3}) \leq 2^{\floor{\frac{n^2}{4}}+12}.$$
Moreover recall that $\cM(F_{3,3}) = \cM'(\vF_{3,3})$ consists of a matching of size $3$ and a star $S_4$. Thus by Lemma \ref{parts_lem}, for $i\in [2]$, $G[V_i]$ has maximum degree $2$ and does not contain a $3$-matching, and thus $|E(V_1)|, |E(V_2)|\leq h(3) = 6$. Since we have assumed $G$ is not the extremal graph for $F_{3,3}$, we may assume that both $V_1$ and $V_2$ contain edges. Further, since $|E(G)|> \ex(n, \vF_{3,3})$, there are at most $\ell \defeq |E(V_1)| + |E(V_2)|-7  \leq 5$ edges missing from between $V_1$ and $ V_2$.

For each $i\in [2]$, let $X_i = \{v\in V_i: d_{V_i}(v)>0\}$. Since $G[X_i]$ has maximum degree 2, any connected components in $G[X_i]$ are paths or cycles. Further, since $G[X_i]$ does not have a 3-matching, this set of possible connected components is reduced to $P_2, P_3, P_4, P_5, C_3, C_4, C_5$. Also,  $G[X_i]$ can have at most two connected components. Finally, if $G[X_i]$ contains any of $P_4, P_5, C_4, C_5$ then $G[X_i]$ can have only one connected component.

Therefore, the possible $X_i$'s are:

\begin{multicols}{3}
\begin{itemize}
    \item $a_1 =P_2$
    \item $a_2 = P_3$
    \item $a_3 = P_4$
    \item $a_4 = P_5$
    \item $a_5 = C_3$
    \item $a_6 = C_4$
    \item $a_7 = C_5$
    \item $a_8 = P_2\sqcup P_3$
    \item $a_9 = P_2\sqcup C_3$
    \item $a_{10} = P_3\sqcup C_3$
    \item $a_{11} = P_2\sqcup P_2$
    \item $a_{12} = P_3\sqcup P_3$
    \item $a_{13} = C_3\sqcup C_3$
\end{itemize}
\end{multicols}

Now, using the enumeration above, each possible pair $\{G[X_1], G[X_2]\}$ can be written as a pair $\{a_i,a_j\}$. We state two claims.
\begin{claim}
\label{3_verts_claim}
Suppose there are three vertices $q_1, q_2, q_3\in G[X_i]$ each with degree 2, and at least one nonempty component $C$ in $V_{3-i}$ such that there is an edge in $E(G)$ between each vertex of $C$ and $q_1, q_2$ and $q_3$. Then $G$ does not maximize $D(G, \vF_{3,3})$.
\end{claim}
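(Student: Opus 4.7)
The plan is to derive a contradiction by showing $D(G, \vF_{3,3}) < 2^{\lfloor n^2/4 \rfloor + 6}$, which contradicts the lower bound $D(n, \vF_{3,3}) \geq 2^{\ex(n, F_{3,3})} = 2^{\lfloor n^2/4 \rfloor + 6}$ attained by the extremal $F_{3,3}$-free graph. This will be done by bounding from above the probability that a uniformly random orientation $\vG$ of $G$ is $\vF_{3,3}$-free, in the style of the proof of Theorem \ref{k-fans_thm_1}.

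First, since $C$ is a connected component of $G[X_{3-i}]$ and every vertex of $X_{3-i}$ has positive internal degree, we may select $c_1, c_2 \in C$ with $c_1 c_2 \in E(G)$. By hypothesis, both $c_1$ and $c_2$ are adjacent to each of $q_1, q_2, q_3$, giving the complete bipartite graph $K_{2,3}$ between $\{c_1, c_2\}$ and $\{q_1, q_2, q_3\}$, and in particular the three triangles $c_1 c_2 q_\ell$ for $\ell \in \{1,2,3\}$. Each $q_\ell$ has two internal neighbors $a_\ell, b_\ell \in V_i$; since at most $5$ bipartite edges are missing from $G$ (because $|E(V_1)|, |E(V_2)| \leq 6$ and $|E(G)| > \lfloor n^2/4 \rfloor + 6$), we may assume after relabeling that $a_\ell$ is also adjacent to $c_1$ and $c_2$ for each $\ell$. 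This gives a rich local structure containing many triangles through $c_1$, including $c_1 c_2 q_\ell$, $c_1 q_\ell a_\ell$, and any triangles arising from internal edges among $\{q_1, q_2, q_3\}$.

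Next, extract several mutually vertex-disjoint-except-at-$c_1$ candidate triples of triangles, for example $\{c_1 c_2 q_1, c_1 q_2 a_2, c_1 q_3 a_3\}$, each of which is a potential copy of $F_{3,3}$ centered at $c_1$. As in the proof of Theorem \ref{k-fans_thm_1}, since $\vF_{3,3}$ is anti-directed each triangle admits several orientations which are ``apex-compatible'' at $c_1$, so each triple yields a copy of $\vF_{3,3}$ under a uniformly random orientation of $G$ with some constant probability bounded away from $0$. Because the three distinct degree-$2$ vertices $q_1, q_2, q_3$ together with the non-triviality of $C$ allow us to exhibit many such disjoint triples (varying which $q_\ell$ plays which role and which of $a_\ell, b_\ell$, or additional neighbors of $c_1, c_2$ in $C$ is used), iterating yields that the probability $\vG$ is $\vF_{3,3}$-free is at most some constant $\epsilon < 2^{-6}$. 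Combined with $|E(G)| \leq \lfloor n^2/4 \rfloor + 12$, this gives $D(G, \vF_{3,3}) \leq \epsilon \cdot 2^{|E(G)|} < 2^{\lfloor n^2/4 \rfloor + 6}$, the desired contradiction.

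The main obstacle is making the constant $\epsilon$ quantitatively small enough. Since all candidate $\vF_{3,3}$'s through $c_1$ share the edges $c_1 q_\ell$, direct independence fails, and one must carefully select a family of vertex-disjoint candidate triples to apply a union bound strong enough to overcome the up to $12$ interior edges. The presence of three distinct degree-$2$ vertices $q_1, q_2, q_3$ (rather than only one or two) is precisely what provides the combinatorial room to do so.
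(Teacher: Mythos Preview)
Your proposal has a genuine gap at precisely the point you yourself flag as ``the main obstacle.'' You attempt to assemble the entire $\vF_{3,3}$ inside a bounded local configuration: the vertices $c_1,c_2\in C$, the three $q_\ell$, and their internal neighbours $a_\ell,b_\ell$. But this configuration carries only $O(1)$ edges, and every candidate triple of triangles through $c_1$ reuses the spokes $c_1q_\ell$, $c_1c_2$, or $c_1a_\ell$; the events are therefore heavily dependent, and there simply are not enough edge-disjoint triples to force the failure probability below $2^{-6}$. The phrase ``combinatorial room'' is doing all the work, and no calculation is offered. In fact for several of the relevant component shapes the bound one can squeeze out of this local picture is nowhere near $2^{-6}$.

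The paper's argument rests on an idea your sketch is missing entirely: it centres the fan at one of the $q_i$, not at a vertex of $C$, and exploits the $\Theta(n)$ common neighbours in $V_{3-i}$ of $q_i$ together with its two internal neighbours $r,s\in V_i$. For any two such common neighbours $x,y$, the triangles $q_ixr$ and $q_iys$ are candidates for $\vT_1,\vT_2$; since there are linearly many disjoint pairs $(x,y)$, the probability that no pair realises $\vT_1,\vT_2$ is $(15/16)^{\Theta(n)}=o(1)$. This collapses the problem to finding only the third triangle $\vT_3$, centred at some $q_i$ with its other two vertices on an edge of the component $Y=C$. Now the three vertices $q_1,q_2,q_3$ give three \emph{genuinely independent} attempts (the edges from distinct $q_i$ into $Y$ are disjoint), and a case analysis over the possible shapes of $Y$ (a single edge, $P_3$, $P_4$, $P_5$, $C_3$, $C_4$, $C_5$) verifies that the cubed failure probability drops below $1/2^{|E(Y)|}$. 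Without invoking the linear-in-$n$ neighbourhood to dispose of two of the three triangles for free, the required quantitative bound is out of reach.
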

\begin{claim}
\label{paths-claim}
Suppose $G$ has at most $\floor{\frac{n^2}{4}} + 8$ edges. Suppose there are three edge-disjoint $3$-paths 
$$P_3= \{x_1, y_1, z_1\}, P'_{3}= \{x'_1, y'_1, z'_1\} \subseteq G[V_1]\text{ and } P''_{3}= \{x_2, y_2, z_2\}\subseteq G[V_2]$$ such that every possible edge edge between $\{y_1, y_1'\}$ and $P_{3}''$, as well as every possible edge between $\{y_2\}$ and $P_{3}\cup P_{3}'$ is present. Then $D(G, \vF_{3,3})< D(n,\vF_{3,3})$.
\end{claim}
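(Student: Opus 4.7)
The plan is to adapt the probabilistic argument from the proof of Theorem \ref{k-fans_thm_1} to this setting. Writing $s = |E(G)| - \lfloor n^2/4 \rfloor$, the hypothesis gives $s \leq 8$, and it suffices to show that a uniformly random orientation $\vG$ of $G$ is $\vF_{3,3}$-free with probability strictly less than $2^{6-s}$; this yields $D(G,\vF_{3,3}) < 2^{\lfloor n^2/4\rfloor + 6} \leq D(n,\vF_{3,3})$, the desired contradiction. As in the proof of Theorem \ref{k-fans_thm_1}, the bound on $|E(G)|$ combined with Lemma \ref{parts_lem} forces at most $O(1)$ bipartite edges to be missing between $V_1$ and $V_2$, so the bipartite part of $G$ is nearly complete.

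The target is to construct a copy of $\vF_{3,3}$ centered at $y_2$. From the hypothesis, there are eight guaranteed triangles through $y_2$ supported on structural edges:
\[
  \{y_2,x_1,y_1\},\ \{y_2,y_1,z_1\},\ \{y_2,y_1,x_2\},\ \{y_2,y_1,z_2\},\ \{y_2,x_1',y_1'\},\ \{y_2,y_1',z_1'\},\ \{y_2,y_1',x_2\},\ \{y_2,y_1',z_2\}.
\]
Each contains exactly one of $y_1$ or $y_1'$, so any vertex-disjoint "inner" pair of such triangles consists of one from each group; a careful enumeration (accounting for shared use of $x_2$ or $z_2$) gives $14$ such vertex-disjoint inner pairs. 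For the third triangle of $\vF_{3,3}$ we use "extension" triangles of the form $\{y_2, x_2, u\}$ or $\{y_2, z_2, u\}$ for $u \in V_1 \setminus (V(P_3) \cup V(P_3'))$; by bipartite density there are $\Omega(n)$ such $u$ with $y_2u$ and $x_2u$ (respectively $z_2u$) in $E(G)$.

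The probabilistic analysis then proceeds in two stages. First, condition on the orientations of the $O(1)$ structural edges incident to $y_2, y_1, y_1'$ and lying in $P_3 \cup P_3' \cup P_3''$, and let $\alpha$ denote the probability that none of the $14$ inner pairs yields two anti-directed triangles at $y_2$ whose non-center edge orientations match—up to the automorphism group of $\vF_{3,3}$ acting on its three triangles—two of the three triangles of $\vF_{3,3}$. Second, assuming a successful inner pair has been selected, each extension triangle uses two fresh bipartite edges ($y_2u$ and $x_2u$ or $z_2u$), and these are disjoint across different extension vertices $u$; hence the events "this extension produces the required third anti-directed triangle of $\vF_{3,3}$" are independent across $u$, each with positive constant probability $c$. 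Combining,
\[
  \mathbb{P}[\vG \text{ is } \vF_{3,3}\text{-free}] \leq \alpha + (1-c)^{\Omega(n)} = \alpha + o(1).
\]

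The main obstacle is verifying $\alpha < 2^{6-s}$, in particular $\alpha < 1/4$ when $s = 8$. This reduces to a finite case analysis over the orientations of the structural edges, matched against the automorphisms of $\vF_{3,3}$. The abundance of inner pairs ($14$ of them), combined with the threefold symmetry of $\vF_{3,3}$ permuting its triangles and the flexibility provided by the extension step, should give the required bound with room to spare. If a direct enumeration proves unwieldy, one can further exploit that the hypothesis is symmetric in $y_2$ and $\{y_1, y_1'\}$ to also allow $\vF_{3,3}$ copies centered at $y_1$ or $y_1'$ (each of which is adjacent to all of $V(P_3'')$), yielding additional successful configurations and forcing $\alpha$ below the required threshold.
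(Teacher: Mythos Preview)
Your overall framework is right and matches the paper: bound the probability that a uniformly random orientation is $\vF_{3,3}$-free by a constant below $1/4$, so that $D(G,\vF_{3,3}) < \tfrac14 \cdot 2^{\lfloor n^2/4\rfloor+8} = 2^{\lfloor n^2/4\rfloor+6}$. Where you diverge from the paper is in the decomposition of the target fan, and this is exactly where your argument stalls.

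You try to find \emph{two} of the three triangles of $\vF_{3,3}$ among the finitely many structural triangles at $y_2$, and then extend by \emph{one} fresh triangle. This forces the heavy lifting into the ``inner pair'' step: you must show that with probability $>3/4$ some two vertex-disjoint structural triangles at $y_2$ simultaneously realise two prescribed triangles of $\vF_{3,3}$ (respecting the multiset). You acknowledge this as ``the main obstacle'' but do not carry out the computation, and it is not obvious it goes through cleanly: the fourteen pairs share many edges, the matching against the triangle-types of $\vF_{3,3}$ introduces further dependence, and two of your fourteen pairs (those using both $x_2$ and $z_2$) cannot be extended by your proposed third triangle $\{y_2,x_2,u\}$ or $\{y_2,z_2,u\}$ at all, since both of $x_2,z_2$ are already occupied.

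The paper takes the reverse decomposition: find \emph{one} structural triangle isomorphic to $\vT_1$ and extend by \emph{two} fresh triangles. The point is that whichever of $y_1,y_2$ ends up as the centre has degree $2$ in its own part via its $P_3$, supplying two independent ``spine'' edges $y_ix_i,\,y_iz_i$; each can be completed to a correctly oriented triangle with a fresh vertex from the opposite part, and the anti-directed hypothesis guarantees this is always possible regardless of how the spine edge is oriented. The inner step then only asks for a single $\vT_1$ among the eight structural triangles $y_1y_2\ast$ and $y_1'y_2\ast$, giving failure probability $(3/4)^8<1/4$ with no case analysis, and the extension step contributes only an $o(1)$ error. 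Reorganising your argument along these lines removes the unverified $\alpha<1/4$ entirely.
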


%%%%%%%%%%%%%%%%%%%%%%%
\begin{table}[!b]
    \centering
    \begin{tabular}{|c||c|c|c|c|c|c|c|c|c|c|c|c|c|}
\hline
  {} & $a_1$ & $a_2$ & $a_3$ & $a_4$ & $a_5$ & $a_6$ & $a_7$ & $a_8$ & $a_9$ & $a_{10}$ & $a_{11}$ & $a_{12}$ & $a_{13}$ \\
  \hline
  $a_{1}$&  &  &   &   &   &  &   &  &  &   &  &   &    \\
  \hline
  $a_{2}$& &  &   &   &   &  &   &  &  &   &  &   &    \\
  \hline
  $a_{3}$& &  &   &   &   &  &   &  &  &   &  &   &    \\
  \hline
  $a_{4}$&  &  & 1 & 1,2 &   &  &  &   &  &   &  &   &    \\
  \hline
  $a_{5}$& &  &   &  1 &   &  &   &  &  &   &  &   &    \\
  \hline
  $a_{6}$&  &  &  1 &  1 & 1  & 1 &   &  &  &   &  &   &    \\
  \hline
  $a_{7}$& & 1 & 1  &  1 &  1 & 1 & 1,2  &  &  &   &  &   &    \\
  \hline
  $a_{8}$& &  &   & 1  &   & 1 & 1  &  &  &   &  &   &    \\
  \hline
  $a_{9}$& &  & 1  & 1  & 1  & 1 & 1  &1  & 1 &   &  &   &    \\
  \hline
  $a_{10}$& & 1 & 1  & 1,2  & 1  & 1 & 1  & 1 & 1 & 1  &  &   &    \\
  \hline
  $a_{11}$& &  &   &   &   &  & 1  &  &  & 1  &  &   &    \\
  \hline
  $a_{12}$& &  &  2 & 1  & 1  & 1 & 1  & 2 & 1 & 1  &  &  2 &    \\
  \hline
  $a_{13}$&1 & 1 & 1  & 1  &  1 & 1 & 1  & 1 & 1 & 1  & 1 & 1  &  1  \\
  \hline
\end{tabular}
    \caption{Possible pairs $\{G[X_1], G[X_2]\}$ and the claim(s) which we must apply to show $D(G, \vF_{3,3})\neq D(n, \vF_{3,3})$}
    \label{fig:enter-label}
\end{table}

We claim that for any pair $\{a_i, a_j\}$ with $i, j\in [13]$ we may apply at least one of Claim \ref{3_verts_claim} or Claim \ref{paths-claim} to show that $G$ does not maximize $D(G, \vF_{3,3})$. For each pair $\{a_i,a_j\}$ with $i\leq j$, the table below gives the number of the claim we apply to show the complete bipartite graph with $a_i$ placed in one part and $a_j$ placed in the other does not maximize $D(G, \vF_{3,3})$. For positions $a_ia_j$ in Table \ref{fig:enter-label} which contain a 1, even if $G[X_1\cup X_2]$ is missing the maximum number, $\ell$, of edges, one may check that the assumptions of Claim \ref{3_verts_claim} are satisfied. For positions $a_ia_j$ in Table \ref{fig:enter-label} containing a 1 and a 2, the conditions of Claim \ref{3_verts_claim} are satisfied if there are at most $\ell-2$ edges missing from between $X_1$ and $X_2$, and the conditions of Claim \ref{paths-claim} are satisfied if there are $\ell$ or $\ell-1$, edges missing from between $X_1$ and $X_2$. If a position $a_ia_j$ in Table \ref{fig:enter-label} only contains a $2$, the conditions of Claim \ref{paths-claim} are satisfied regardless of how many edges are missing from $G[X_1\cup X_2]$. If a cell $a_ia_j$ with $i\leq j$ is blank, the graph with the pair $\{a_i, a_j\}$ necessarily does not have $\floor{n^2/4} +7$ edges.

Thus, it only remains to prove Claims \ref{3_verts_claim} and \ref{paths-claim}, which we postpone to the appendix.

\end{proof}

\section{Other orientations of $F_{2,3}$ and examples}
We consider the orientations of $F_{2,3}$ which are not covered by Theorem \ref{k-fans_thm_1}, i.e., are not anti-directed. For ease of notation, we denote by $B$ the graph $F_{2,3}$ and call it a bowtie. Recall by Theorem \ref{k-fans_thm_1} that if $\vB$ is an orientation of $B$ such that both triangles have exactly one edge entering the center vertex and exactly one edge exiting the center vertex, then for large enough $n$, 
$$D(n, \vB) = 2^{\floor{\frac{n^2}{4}}+1}.$$
Therefore, we consider other orientations $\vB$ of $B$.

\begin{proof}[Proof of Proposition \ref{not_semi_anti_directed_prop}]
    Let $G$ be a graph defined as follows: There is one vertex $v$ is adjacent to every other vertex in $V(G)$ and $G\setminus\{v\}$ is a balanced complete bipartite graph with parts $V_1$ and $V_2$. We break into cases depending on whether $\vB$ falls into case (a) or (b).
    \begin{enumerate}[label=(\alph*)]
        \item Suppose $\vB$ is the orientation such that all edges incident to the center vertex are oriented towards the center vertex. Suppose there are two vertices in $x_1, y_1\in V_1$ and two vertices $x_2,y_2\in V_2$ such that for each $i\in [2]$ $(x_i, v),(y_i, v)\in E(\vG)$.  Then regardless of how the edges between $V_1$ and $V_2$ are oriented, these five vertices form a copy of $\vB$ with triangles $x_1x_2v$ and $y_1y_2v$. Further note that since every copy of $B$ in $G$ is centered at $v$, if there are not two vertices $x_i,y_i\in V_i$ as above, then $\vG$ does not contain $\vB$.  Thus the number of $\vB$-free orientations of $G$ is 
        $$2^{|V_1||V_2|}(|V_1|2^{|V_2|}+|V_2|2^{|V_1|}).$$
        So we have 
        $$\begin{aligned}
        D(G, \vB) &\geq 2^{\floor{\frac{(n-1)^2}{4}}}\left(\floor{\frac{n-1}{2}}2^{\ceiling{\frac{n-1}{2}}}+\ceiling{\frac{n-1}{2}}2^{\floor{\frac{n-1}{2}}}-\floor{\frac{n^2}{4}}\right)\\ 
        &= n(1-o(1))2^{\frac{n^2}{4}}.
        \end{aligned}$$
The case where $\vB$ has all four edges oriented away from the center is similar.
\item Suppose $\vB$ is the orientation of $B$ such that there are vertices $x_1, y_1\in V_1$ and $x_2, y_2\in V_2$ such that for each $i\in [2]$, $(x_i,v)\in E(G)$ and $(v, y_i)\in G$. Again, regardless of how the edges between $V_1$ and $V_2$ are oriented, these five vertices form a copy of $\vB$ with triangles $x_1x_2v$ and $y_1y_2v$. Further, if there is $i\in [2]$ such that the pair $(v, V_i)$ is pure, then there is no $\vB$ in $\vG$. Thus, the number of $\vB$-free orientations of $G$ is 
        $$2^{|V_1||V_2|}(2\cdot 2^{|{V_1|}} +2\cdot 2^{|V_2|}).$$
        So we have 
        $$D(G, \vB) \geq 2^{\floor{\frac{(n-1)^2}{4}}+1}\left(2^{\ceiling{\frac{n-1}{2}}}+2^{\floor{\frac{n-1}{2}}}-4\right) =  2^{\floor{\frac{n^2}{4}}+\frac{7}{4} - o(1)}.$$
    \end{enumerate}
\end{proof}

The only remaining orientations $\vB$ of $B$ not covered by Theorem \ref{k-fans_thm_1} or \ref{not_semi_anti_directed_prop} are those where $\vB$ has exactly three edges entering the center vertex or exactly three edges exiting the center vertex. When this is the case, one can easily check that the graph $G$ from the proof of Proposition \ref{not_semi_anti_directed_prop} has fewer than $2^{\floor{n^2/4}+1}$ many $\vB$-free orientations. Indeed, we have been unable to find any graph $G$ such that $D(G, \vB)>2^{\floor{n^2/4}+1}$.  

On the other hand, one can see that for any orientation $\vB$ of $B$ which is not anti-directed, $\cM(\vB)$ only contains at most one of the pure orientations of $S_3$. Therefore, we may not apply Theorem \ref{general_graph_theorem} or \ref{general_gph_finite_thm}. Thus, for any orientation $\vB$ which is not anti-directed we have no good upper bound on $D(n, \vB)$.

Next, we have an example of a graph that $H$ such that no orientation $\vH$ of $H$ has the property that $D(n, \vH) = 2^{\ex(n, H)}$. Let $W_{2k+1}$ represent the wheel graph with $2k+1$ vertices. Suppose $k$ is even and sufficiently large.  We claim that for any orientation $\vW_{2k+1}$ of $W_{2k+1}$, and $n_0$, there is some $n>n_0$ such that
$$D(n, \vW_{2k+1})>2^{\ex(n, W_{2k+1})}.$$

\begin{proof}[Proof of Proposition \ref{wheel_prop}]
    In \cite{YUAN2021} Yuan showed that for $k\geq 3$, 
    $$\ex(n, W_{2k+1}) = \max\{n_0n_1 + \floor{\frac{(k-1)n_0}{2}}:n_0 + n_1 = n\}$$
    This graph was obtained by letting $G$ be a bipartite graph with parts $|V_0| = n_0$, $|V_1| = n_1$ and placing a $(k-1)$-regular or almost $(k-1)$-regular graph in one part (depending on the parity of $k, n_0$) and a single edge in the other part.  Let $n$ be such that an extremal graph for $W_{2k+1}$ has an almost $(k-1)$-regular graph in $V_0$.  Then there is a component of $G[V_0]$ with fewer than $2k$ vertices such that all but one vertex have degree $k-1$ and the final vertex, call it $w$, has degree $k-2$.  Let $v$ be another vertex in that component, which is not a neighbor of $w$. Let $G' = G\cup\{vw\}$.  We claim that for any orientation $\vW_{2k+1}$, 
    $$D(G', \vW_{2k+1})>D(G, \vW_{2k+1}) = 2^{\ex(n, W_{2k+1})}.$$ 
    Indeed, we only have to show that if the edges of $G'$ are directed uniformly at random, the probability of there being a copy of $\vW_{2k+1}$ is less than $\frac{1}{2}$.  
    
    Notice that $\cM(W_{2k+1}) = \{S_{k+1}, C_{2k}\}$. Further, $\cM(\vW_{2k+1})$ only contains two orientations of $S_{k+1}$, (given by taking the orientations of every other spoke of $\vW_{2k+1}$ in the two possible ways), and one orientation of $C_{2k}$.  Now, $G'[V_0]$ still contains no $2k$-cycles and there is exactly one vertex in $G'[V_0]$ of degree at least $k$, namely $v$. Thus, every copy of $W_{2k+1}$ in $G'$ is centered at $v$.  There are $2^k$ ways to orient the $S_{k+1}$ centered at $v$ contained in $V_0$, and for any given orientation of $\vS_{k+1}$, at most $\binom{k}{k/2}$ of these orientations give rise to an orientation isomorphic to $\vS_{k+1}$. Thus the probability of there being a copy of $\vW_{2k+1}$ contained in an orientation $\vG'$ is at most $\frac{2\binom{k}{k/2}}{2^k}<\frac{1}{2}$ since $k$ is sufficiently large. So in particular,
    $$D(G', \vW_{2k+1})>\frac{1}{2}2^{|E(G')|}= \frac{1}{2}2^{\ex(n, W_{2k+1} + 1)}.$$
\end{proof}

\section{Acknowledgments}
We would like to thank Caroline Terry for her help and advice throughout the entirety of this project.

\appendix

\section{Proof of Lemma \ref{main_lemma}}\label{Proof_of_Lemma}
We begin by setting some notation. Let $\vH$ be a directed graph. 
 Given two subsets $A, B\subseteq V(\vH)$ we define the density of edges to be
$$d(A, B) = \frac{|E(A, B)|}{|A||B|}.$$  
Note that since $\vH$ is a directed graph, $E(A, B)\neq E(B, A)$ so in general $d(A, B)\neq d(B, A)$.  Let $\epsilon>0$. Given $A, B\subseteq V(G)$, the pair $(A, B)$ is called $\epsilon$-regular if for every $X\subseteq A$, $Y\subseteq B$ with $|X|\geq \epsilon|A|$ and $|Y|\geq \epsilon |B|$, we have $|d(X, Y)-d(A, B)|\leq \epsilon$ and $|d(Y, X)-d(B, A)|\leq \epsilon$. An equitable partition is a partition of the vertices of $G$ into subsets of as equal size as possible.  An equitable partition $V(G) = V_1\cup \hdots\cup V_m$ is called $\epsilon$-regular if $|V_i|\leq \epsilon |V(G)|$ for each $i$, and all but at most $\epsilon \binom{m}{2}$ of the pairs $(V_i, V_j)$ are $\epsilon$-regular.
The following lemma is the directed regularity lemma, a varient of Szemeredi's regularity lemma.

\begin{lemma}[Alon, Yuster \cite{Alon2006}]
For every $\epsilon>0$ there exists $M$ such that any graph on $n>M$ vertices has an $\epsilon$-regular partition of the vertex set $V = V_1\cup\hdots\cup V_m$ for some $1/\epsilon\leq m\leq M$.
\end{lemma}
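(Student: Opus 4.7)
The plan is to adapt the standard energy-increment proof of Szemer\'edi's regularity lemma to the directed setting. For an equitable partition $P = \{V_1, \ldots, V_m\}$ of the vertex set, define the \emph{index}
$$q(P) = \frac{1}{n^2}\sum_{i \ne j} d(V_i, V_j)^2 \, |V_i| \, |V_j|,$$
where the sum is over \emph{ordered} pairs $(i, j)$ so that $d(V_i, V_j)$ and $d(V_j, V_i)$ each contribute their own term. Since every density lies in $[0, 1]$, we have $0 \le q(P) \le 1$.

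First I would prove monotonicity: if $P'$ refines $P$, then $q(P') \ge q(P)$. This follows from Jensen's inequality applied to $x \mapsto x^2$, using that each $d(V_i, V_j)$ is a weighted average of the directional densities between the sub-cells of $V_i$ and $V_j$ in $P'$. Next, I would establish the defect Cauchy--Schwarz step: if $(A, B)$ fails to be $\epsilon$-regular, then there exist $X \subseteq A$ and $Y \subseteq B$ with $|X| \ge \epsilon|A|$, $|Y| \ge \epsilon|B|$, and either $|d(X, Y) - d(A, B)| > \epsilon$ or $|d(Y, X) - d(B, A)| > \epsilon$. In either case, partitioning $A$ into $\{X, A \setminus X\}$ and $B$ into $\{Y, B \setminus Y\}$ strictly increases the contribution of the ordered pair (in the witnessed direction) to $q$ by at least $\epsilon^4 |A||B|/n^2$; the contribution in the reverse direction can only rise, by monotonicity.

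Finally, I would iterate. Start from any equitable partition of size $\lceil 1/\epsilon \rceil$. If the current partition $P$ of size $m$ is not $\epsilon$-regular, then at least $\epsilon\binom{m}{2}$ of the (unordered) pairs are irregular. Performing the refinement of the previous step simultaneously for each irregular pair yields a partition $P^\ast$ with $q(P^\ast) \ge q(P) + \epsilon^5$ (up to a universal constant). Re-equitize $P^\ast$ by the standard device of gathering the small residue cells into a bounded number of equal-sized extra parts, losing only a negligible amount of index. Since $q \le 1$, the process terminates after at most $\epsilon^{-5}$ iterations, producing an $\epsilon$-regular equitable partition whose number of parts is bounded by a tower-type function $M = M(\epsilon)$, with $m \ge 1/\epsilon$ enforced by the initial choice.

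The main obstacle compared to the undirected case is that each unordered pair $\{V_i, V_j\}$ contributes two independent terms to the index, one per orientation, and the irregularity witnesses $X, Y$ may witness either direction. However, because a single partitioning of $A$ and $B$ captures a witness regardless of its direction, and because monotonicity guarantees that the non-witnessing direction can only help, the classical defect Cauchy--Schwarz argument carries over with only cosmetic changes. The quantitative bound on $M$ is then essentially the same tower-type bound as in the undirected regularity lemma.
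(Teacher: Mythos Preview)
The paper does not give its own proof of this lemma: it is stated as a result of Alon and Yuster \cite{Alon2006} and quoted without proof, then used as a black box in the proof of Lemma~\ref{main_lemma}. So there is no ``paper's proof'' to compare against.

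That said, your proposed argument is essentially the standard one, and it is the approach taken in the original reference. The key observation you make---that the index should sum over \emph{ordered} pairs so that $d(V_i,V_j)$ and $d(V_j,V_i)$ contribute separately, and that a single bipartition of $A$ and $B$ captures an irregularity witness in whichever direction it occurs while monotonicity controls the other direction---is exactly the point at which the directed proof departs from the undirected one, and you have identified it correctly. The remaining steps (Jensen for monotonicity, defect Cauchy--Schwarz for the energy increment, simultaneous refinement, re-equitization, and termination after $O(\epsilon^{-5})$ rounds) carry over without change, yielding the same tower-type bound on $M$. Your sketch is correct; just be aware that the re-equitization step requires a little bookkeeping to ensure the energy loss is genuinely negligible compared to the $\epsilon^5$ gain, but this is routine and handled identically to the undirected case.
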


\begin{proof}[Proof of Lemma \ref{main_lemma}]
Let $G$ be a graph with at least $2^{t(n,p)}$ $\vH$-free orientations. We begin by fixing $0<\epsilon\leq \eta\leq\beta\leq\alpha\leq\delta$. Let $\vG$ be an orientation of $G$.  We apply the directed regularity lemma to $\vG$ to obtain an $\epsilon$-regular partition $V(G) = V_1\cup\hdots\cup V_m$. We define a directed graph $C$, called the cluster graph, as follows: the vertices of $C$ are $V_1, \hdots , V_m$, and the there is an edge from $V_i$ to $V_j$ if $(V_i, V_j)$ is an $\epsilon$-regular pair, and the density $d(V_i, V_j)>\eta$.  Note this graph may have bidirected edges. 

We will show that for some $\vH$-free orientation $\vG$ of $G$, the graph $C$ has at least $(1-\frac{1}{p})\frac{m^2}{2}-\beta m^2$ many bidirected edges.  Suppose not. We will draw the contradiction that $G$ does not have $2^{t(n,p)}$ orientations (and so $G$ certainly does not have $2^{t(n,p)}$ $\vH$-free orientations).  Let $M = M(\epsilon)$ be as in the directed regularity lemma, and notice that $m\leq M$. Then there are fewer than $M^n$ choices for the partition $V_1,\hdots, V_m$, at most $2^{\binom{M}{2}}$ choices for the set of $\epsilon$-regular pairs in $C$ and $4^{\binom{M}{2}}$ choices for the edges in $C$.  All together, we can choose $V_1, \hdots, V_m$, our $\epsilon$-regular pairs and the edges of $C$ in at most $M^n2^{\binom{M}{2}}4^{\binom{M}{2}}\leq M^n2^{3M^2/2}$ ways. Next, given a choice of $V_1, \hdots, V_m$, the $\epsilon$-regular pairs and the edges of $C$, we bound the number of $\vH$-free orientations of $G$ which give rise to that fixed choice.  

    %1
    There are at most $\epsilon n^2$ edges inside the $V_i$'s and there are at most $\epsilon n^2$ edges between non $\epsilon$-regular pairs.  Each of these edges can be oriented in two ways so there are a total of at most $2^{2\epsilon n^2}$ ways to orient the edges that are either within one of the $V_i$'s or between a non $\epsilon$-regular pair.

    %2
    Let $(V_i, V_j)$ be an $\epsilon$-regular pair which is not an edge of $C$.  Then there are at most about $\frac{\eta n^2}{m^2}$ directed edges of $G$ from $V_i$ to $V_j$.  Therefore, there exists a constant $c_\eta$, with $c_\eta\to 0$ as $\eta\to 0$, such that there are at most $2^{c_\eta n^2/m^2}$ ways to orient these edges. Now, there are at most $m^2$ of these pairs $(V_i, V_j)$, so there are at most $2^{c_\eta n^2}$ ways to orient all the edges between such pairs.
    
    %3
    Finally, let $(V_i, V_j)$ be a pair of vertices in $C$ with a bidirected edge between them.  Then there are at most about $2^{n^2/m^2}$ orientations of the edges between these parts.  However we are assuming any $C$ has at most $(1-\frac{1}{p})\frac{m^2}{2}-\beta m^2$ bidirected edges, so the edges between all $V_i$ and $V_j$ such that $(V_i,V_ej)$ a bidirected edge in $C$ can be oriented in at most 
    $$2^{\frac{n^2}{m^2}( (1-\frac{1}{p})\frac{m^2}{2}-\beta m^2)} = 2^{(1-\frac{1}{p})\frac{n^2}{2}-\beta n^2}$$ 
    ways.
    
Putting this all together, the number of orientations of the edges of $G$ is at most 
$$M^n2^{3M^2/2}\cdot2^{2\epsilon n^2}\cdot2^{c_\eta n^2}\cdot 2^{(1-\frac{1}{p})\frac{n^2}{2}-\beta n^2} = M^n2^{3M^2/2}\cdot 2^{n^2(2\epsilon +c_\eta+\frac{1}{2}(1-\frac{1}{p})-\beta)}.$$
Choosing $\eta$ and $\epsilon$ small enough with respect to $\beta$ gives that the total number of orientations is less than $2^{t(n,p)}$, a contradiction.  Thus, there is an orientation of the edges of $G$ such that the resulting cluster graph has at least $\frac{m^2}{4}-\beta m^2$ bidirected edges.

Now fix an $\vH$-free orientation $\vG$ of $G$ such that $C$ has at least $(1-\frac{1}{p})\frac{m^2}{2}-\beta m^2$ bidirected edges.  Notice, the graph formed by only the bidirected edges of $C$ has no copy of $K_{\chi(H)}$.  Otherwise, by the embedding lemma, $\vG$ would have a copy of $\vH$.  So, the graph consisting of only bidirected edges of $C$ is $K_{\chi(H)}$-free and has at least $(1-\frac{1}{p})\frac{m^2}{2}-\beta m^2$ edges.  By the stability theorem of Simonovits there is a partition $V(C) = W_1\cup\hdots\cup W_p$ of the vertices of $C$ such that there are at most $\alpha m^2$ bidirected edges within a part.  This also implies that there are at least $(1-\frac{1}{p})\frac{m^2}{2}-(\beta + \alpha)m^2$ bidirected edges between parts $W_1, \hdots, W_p$.  Next, suppose for contradiction $C$ had more than $4p(\alpha + \beta)m^2$ (not necessarily bidirected) edges inside parts.  Then at least $4(\alpha +\beta)m^2$ of these edges are in a single part, say $W_1$.  So there is a bipartite graph with at least $2(\alpha + \beta)m^2$ edges inside $W_1$.  Noting that these edges may be bidirected, there are at least $(\alpha + \beta)m^2$  distinct pairs of vertices in $W_1$ that form a directed edge in $C$.  Now, taking these $(\alpha + \beta)m^2$ edges along with all the bidirected edges between $W_i$ and $W_j$, with $i\neq j \in [p]$, we have a graph with more than $(1-\frac{1}{p})\frac{m^2}{2}$ edges, and thus by Tur\`{a}n's theorem, contains a copy of $K_p$.  Therefore we have $|E(W_1)| +\hdots + |E(W_p)|\leq \alpha m^2 + 4p(\alpha + \beta)m^2$. 

Suppose we remove all edges of $G$ corresponding to edges within $W_1$ or $W_2$. This at most $(\alpha m^2 + 4p(\alpha + \beta)m^2)\frac{n^2}{m^2}$ edges.  In addition, remove all edges of $G$ which fall within a $V_i$, or between $V_i, V_j$ where $(V_i, V_j)$ is not an edge of $C$.  Then the resulting subgraph of $G$ is $p$-partite. There are at most $\epsilon n^2$ edges that are entirely within a $V_i$ and at most $\epsilon n^2$ edges that are between non $\epsilon$-regular pairs of our partition of $G$.  Furthermore, there are at most $2\eta n^2$ edges between $\epsilon$-regular pairs which are not edges of $C$.  Therefore, we have removed at most 
$$\alpha n^2 + 4p(\alpha + \beta)n^2 + \epsilon n^2 + \epsilon n^2 + 2\eta n^2<\delta n^2$$
edges. Thus the vertices of $G$ have a $p$-partition with at most $\delta n^2$ edges within parts.
\end{proof}

\section{Proof of Claims \ref{3_verts_claim} and \ref{paths-claim} from Theorem \ref{k-fans_thm_2}}
\begin{proof}[Proof of Claim \ref{3_verts_claim}]
    Suppose without loss of generality that $q\in V_1$ has degree 2 in $G[X_1]$, and $Y\subseteq X_2$ is a component of $G[X_2]$. Let $c = |E(Y)|$. 

    Let $\vT_1, \vT_2, \vT_3$ be the three triangles of $\vF_{3,3}$. Let $r, s$ be neighbors of $q$ in $X_1$.  Then the size of the common neighborhood of $q, r, s$ is at least $\frac{n}{2}- 5$. For any two vertices $x, y\in N(\{q, r, s\})$, $qxr$ and $qys$ are both triangles. For each of these triangles, since $\vF_{3,3}$ is anti-directed, regardless of how the edges $qr$ and $qs$ are oriented, there is at least one way to orient the edges $xs, xq, yq, yr$ such that these two triangles are isomorphic to $\vT_1$ and $\vT_2$ respectively, both centered at $q$. The probability that there is no such pair of triangles $qxr, qys$ isomorphic to $\vT_1, \vT_2$ is at most 
    $$\epsilon\defeq \left(\frac{15}{16}\right)^{n/5}$$
    which tends to $0$ as $n\to \infty$. Let $\vT_3 = tuv$ be the orientation of the remaining triangle in $\vF_{3,3}$ centered at $u$. Suppose there is a copy of $\vT_3$ centered at $q \in \{q_1, q_2, q_3\}$ with its other two vertices in $Y$. Then the probability that there is a copy of $\vF_{3,3}$ centered at $q$ is $1-\epsilon(n)$. 

    Let $p$ be the probability that an orientation of $G[\{q_1,q_2,q_3\}\cup Y]$ chosen uniformly at random contains a copy of $\vT_3$ centered at $q_i$ for some $i\in [3]$, and such that the other two vertices of this $\vT_3$ lie in $Y$. Then, given an orientation $\vG$ of $G$ chosen uniformly at random, the probability that there is a copy of $\vF_{3,3}$ centered at $q_i$ for some $i\in [3]$ and containing exactly one edge in $Y$ is at least $p(1-\epsilon) \geq p-\epsilon$. Therefore, if $1-p<1/2^c$, we may delete the edges of $Y$ from $G$ to obtain a graph with more $\vF_{3,3}$-free orientations than $\vG$. Thus, it only remains to show $1-p<\frac{1}{2^c}$ for all possible components $Y$. Note that for each triangle $quv$ with $q = q_i$ for some $i\in[3]$, and $u, v\in V(Y)$, given an orientation of the edge $uv$, there is exactly one way to orient the edges $qu, qv$ such that $G[\{q, u, v\}]\cong \vT_3$.  
    \begin{enumerate}[label=(\alph*)]
        \item Suppose $Y$ is a single edge $uv$. Then regardless of how the edge $uv$ is oriented there is exactly one way to orient the edges $qu, qv$ to form $\vT_3$. Thus, the probability that the triangle $quv$ is $\vT_3$ is $\frac{1}{4}$. The probability that $\vG[q_i,u,v] \not\cong \vT_3$ for any $i\in[3]$ is
        $$\left(\frac{3}{4}\right)^3 = \frac{27}{64} <\frac{1}{2}.$$
        \item Suppose $Y$ is $P_3 = uvw$. Label the triangles as follows: $A = quv$, and $B = qvw$. There are two ways, up to symmetry and reversing of directions, to orient the edges $uv, vw$. 
        \begin{enumerate}[label=(\roman*)]
            \item Suppose without loss of generality that 
            $$(u, v), (v,w)\in E(\vG).$$
            Suppose $qu$ is oriented so that it is impossible to have $\vG[A]\cong \vT_3$. Then the probability that $\vG[B]\cong \vT_3$ is $1/4$. If $qu$ is oriented so that we may have $\vG[A]\cong\vT_3$, then the probability that $\vG[A]\cong \vT_3$ is $1/2$. If $qv$ is oriented so that $\vG[A]\not\cong \vT_3$, then the probability that $\vG[B]\cong \vT_3$ is $1/2$. Thus the probability that either $\vG[A]$ or $\vG[B]$ is $\vT_3$ is 
            $$\frac{1}{2}\cdot\frac{1}{4} + \frac{1}{2}\cdot\frac{1}{2} + \frac{1}{2}\cdot\frac{1}{2}\cdot\frac{1}{2} = \frac{1}{2}.$$
            \item Suppose without loss of generality that 
            $$(u,v), (w,v)\in E(\vG).$$
            Suppose $qu$ is oriented so that it is impossible to have $\vG[A]\cong \vT_3$. Then the probability that $\vG[B]\cong \vT_3$ is $1/4$.  If $qu$ is oriented so that we may have $\vG[A]\cong\vT_3$, then the probability that $\vG[A]\cong \vT_3$ is $1/2$.  If $qv$ is oriented so that $\vG[A]\not\cong \vT_3$ then $\vG[B]\not\cong \vT_3$.
            $$\frac{1}{2}\cdot\frac{1}{4} + \frac{1}{2}\cdot\frac{1}{2} = \frac{3}{8}.$$
        \end{enumerate}
        Thus, since there are three possible vertices that could be $q$, the probability that neither $\vG[A]$ nor $\vG[B]\cong \vT_3$ for any $q = q_i$ is 
        $$\frac{1}{2}\left(\frac{1}{2}\right)^3 + \frac{1}{2}\left(\frac{5}{8}\right)^3 \approx 0.19<\frac{1}{2^2}$$
        \item Suppose $Y$ is $P_4 = uvwx$. Label the triangles as follows: $A = quv$, and $B = qvw$, $C = qwx$. Then there are three ways, up to symmetry and reversing of directions, to orient the edges of $Y$.
        \begin{enumerate}[label=(\roman*)]
            \item Suppose without loss of generality that 
            $$(u,v), (v,w), (w, x)\in E(\vG).$$
            Suppose $qu$ is oriented so that it is impossible to have $\vG[A]\cong \vT_3$. Then the probability that $\vG[B]$ or $\vG[C]\cong \vT_3$ is $1/2$ by case (b)(i). If $qu$ is oriented so that we may have $\vG[A]\cong\vT_3$, then the probability that $\vG[A]\cong \vT_3$ is $1/2$. If $qv$ is oriented so that $\vG[A]\not\cong \vT_3$, then the probability that $\vG[B]\cong \vT_3$ is $1/2$. If $\vG[B]\not\cong \vT_3$, then the probability that $\vG[C]\cong \vT_3$ is $1/2$. Thus the probability that either $\vG[A]$, $\vG[B]$ or $\vG[C]$ is $\vT_3$ is 
            $$\frac{1}{2}\cdot\frac{1}{2} + \frac{1}{2}\cdot\frac{1}{2} + \frac{1}{2}\cdot\frac{1}{2}\cdot\frac{1}{2} + \frac{1}{2}\cdot\frac{1}{2}\cdot\frac{1}{2}\cdot \frac{1}{2} = \frac{11}{16}.$$
            \item Suppose without loss of generality that 
            $$(v,u), (v,w), (w, x)\in E(\vG)$$
            Suppose $qu$ is oriented so that it is impossible to have $\vG[A]\cong \vT_3$. Then the probability that $\vG[B]$ or $\vG[C]\cong \vT_3$ is $1/2$.  If $qu$ is oriented so that we may have $\vG[A]\cong\vT_3$, then the probability that $\vG[A]\cong \vT$ is $1/2$. If $qv$ is oriented so that $\vG[A]\not\cong \vT_3$ then $\vG[B]\not\cong \vT_3$, and the probability that $\vG[C]\cong \vT_3$ is $1/4$. Thus the probability that either $\vG[A]$, $\vG[B]$ or $\vG[C]$ is $\vT_3$ is
            $$\frac{1}{2}\cdot\frac{1}{2} + \frac{1}{2}\cdot\frac{1}{2} + \frac{1}{2}\cdot\frac{1}{2}\cdot\frac{1}{4} = \frac{9}{16}.$$
            \item Suppose without loss of generality that 
            $$(u,v), (w,v), (w, x)\in E(\vG)$$
             Suppose $qu$ is oriented so that it is impossible to have $\vG[A]\cong \vT_3$. Then the probability that $\vG[B] $ or $\vG[C]\cong \vT_3$ is $3/8$.  If $qu$ is oriented so that we may have $\vG[A]\cong \vT_3$, then the probability that $\vG[A]\cong \vT_3$ is $1/2$. If $qv$ is oriented so that $\vG[A]\not\cong \vT_3$ then $\vG[B]\not\cong \vT_3$, so the probability that $\vG[C]\cong \vT_3$ is $1/4$. Thus the probability that either $\vG[A]$, $\vG[B]$ or $\vG[C]$ is $\vT_3$ for any $q = q_i$ is
            $$\frac{1}{2}\cdot\frac{1}{2} + \frac{1}{2}\cdot\frac{1}{2} + \frac{1}{2}\cdot\frac{1}{2}\cdot\frac{1}{4} = \frac{1}{2}.$$
        \end{enumerate}
        There are two orientations of $Y$ that fall into case (i), four that fall into case (ii), and two that fall into case (iii). Thus, since there are three possible vertices that could be $q$, the probability that neither $\vG[A], \vG[B]$ nor $\vG[C]\cong \vT_3$ is 
        $$\frac{1}{4}\left(\frac{5}{16}\right)^3 + \frac{1}{2}\left(\frac{7}{16}\right)^3 + \frac{1}{4}\left(\frac{1}{2}\right)^3 \approx 0.081<\frac{1}{2^3}.$$
        \item Suppose $Y$ is $P_5 = uvwxy$. Label the triangles as follows: $A = quv$, and $B = qvw$, $C = qwx$, $D = qxy$. Then there are six ways, up to symmetry and reversing of directions, to orient the edges of $Y$.
        \begin{enumerate}[label=(\roman*)]
            \item Suppose without loss of generality that 
            $$(u,v), (v,w), (w, x), (x,y)\in E(\vG).$$
            Using an argument similar to that in case (c)(i), we have that the probability that $\vG[A], \vG[B], \vG[C],$ or $\vG[D]\cong \vT_3$ is 
            $$\frac{1}{2}\cdot \frac{11}{16} + \left(\frac{1}{2}\right)^2 + \left(\frac{1}{2}\right)^3 + \left(\frac{1}{2}\right)^4 + \left(\frac{1}{2}\right)^5 = \frac{13}{16}.$$
            \item Suppose without loss of generality that 
            $$(u,v), (v,w), (w, x), (y,x)\in E(\vG).$$
            By an argument similar that in case (b)(ii), we have that the probability that $\vG[A], \vG[B], \vG[C]$ or $\vG[D]\cong \vT_2$ is 
            $$\frac{1}{2}\cdot\frac{11}{16} + \frac{1}{2}\cdot\frac{1}{2} + \frac{1}{2}\cdot\frac{1}{2}\cdot\frac{1}{2} = \frac{23}{32}.$$
            \item Suppose without loss of generality that 
            $$(u,v), (v,w), (x, w), (x,y)\in E(\vG).$$
            By an argument similar that in case (b)(ii), we have that the probability that $\vG[A], \vG[B], \vG[C]$ or $\vG[D]\cong \vT_3$ is 
            $$\frac{1}{2}\cdot\frac{9}{16} + \frac{1}{2}\cdot\frac{1}{2} + \frac{1}{2}\cdot\frac{1}{2}\cdot\frac{1}{2} = \frac{21}{32}.$$
            \item Suppose without loss of generality that 
            $$(u,v), (v,w), (x, w), (x,y)\in E(\vG).$$
            By an argument similar that in case (b)(ii), we have that the probability that $\vG[A], \vG[B], \vG[C]$ or $\vG[D]\cong \vT_3$ is 
            $$\frac{1}{2}\cdot\frac{9}{16} + \frac{1}{2}\cdot\frac{1}{2} + \frac{1}{2}\cdot\frac{1}{2}\cdot\frac{1}{2} = \frac{21}{32}.$$
            \item Suppose without loss of generality that 
            $$(u,v), (v,w), (x, w), (y,x)\in E(\vG).$$
            Suppose $qu$ is oriented so that it is impossible to have $\vG[A]\cong \vT_3$. Then the by case (b)(ii) probability that $\vG[B], \vG[C]$ or $\vG[D]\cong \vT_3$ is $9/16$.  If $qu$ is oriented so that we may have $\vG[A]\cong \vT_3$, then the probability that $\vG[A]\cong \vT_3$ is $1/2$. If $qv$ is oriented so that $\vG[A]\not\cong \vT_3$ then the probability that $\vG[B]\cong \vT_3$ is $1/2$. If $qw$ is oriented so that $\vG[B]\not\cong \vT_3$, then $\vG[C]\not\cong\vT_3$ so the probability that $\vG[D]\cong \vT_3$ is $1/4$.
            $$\frac{1}{2}\cdot\frac{9}{16} + \frac{1}{2}\cdot\frac{1}{2} + \frac{1}{2}\cdot\frac{1}{2}\cdot\frac{1}{2} + \frac{1}{2}\cdot\frac{1}{2}\cdot\frac{1}{2}\cdot\frac{1}{4}= \frac{11}{16}.$$
            \item Suppose without loss of generality that 
            $$(u,v), (w,v), (w, x), (y,x)\in E(\vG).$$
            By an argument similar that in case (b)(ii), we have that the probability that $\vG[A], \vG[B], \vG[C]$ or $\vG[D]\cong \vT_3$ is 
            $$\frac{1}{2}\cdot\frac{1}{2} + \frac{1}{2}\cdot\frac{1}{2} + \frac{1}{2}\cdot\frac{1}{2}\cdot\frac{3}{8} = \frac{19}{32}.$$
        \end{enumerate}
        There are two orientations of $Y$ which fall into each of cases (i), (iv), (v), and (vi), and four orientations of $Y$ which fall into each of cases (ii) and (iii). Thus, since there are three possible vertices that could be $q$, the probability that neither $\vG[A], \vG[B], \vG[C]$ nor $\vG[D]\cong \vT_3$ for any $q= q_i$ is 
        $$\begin{aligned}\frac{1}{8}\left(\frac{3}{16}\right)^3 &+ \frac{1}{4}\left(\frac{9}{32}\right)^3 + \frac{1}{4}\left(\frac{11}{32}\right)^3 + \frac{1}{8}\left(\frac{11}{32}\right)^3 + \frac{1}{8}\left(\frac{5}{16}\right)^3 + \frac{1}{8}\left(\frac{13}{32}\right)^3\\
        &\approx 0.034 < \frac{1}{2^4}.
        \end{aligned}$$
         \item Suppose $Y$ is $C_3 = uvw$. Label the triangles as follows: $A = quv$, and $B = qvw$, $C = qwu$. Then there are two ways, up to symmetry and reversing of directions, to orient the edges of $Y$.
        \begin{enumerate}[label=(\roman*)]
            \item Suppose without loss of generality that 
            $$(u,v), (v,w), (w, u)\in E(\vG).$$
            Notice that as long as there are edges $(q, t_1), (t_2, q)\in E(\vG)$, then at least one of $\vG[A], \vG[B]$ or $\vG[C]\cong \vT_3$. Thus the probability of this is 
            $$\frac{3}{4}.$$
            \item Suppose without loss of generality that 
            $$(u,v), (w,v), (w, u)\in E(\vG).$$
            Suppose $qv$ is oriented so that it is impossible to have $\vG[A]\cong \vT_3$. Then $\vG[B]\not\cong \vT_3$ as well. The probability that $\vG[C]\cong \vT_3$ is $1/4$. Thus suppose $qv$ is oriented in such a way that we may have have $\vG[A]\cong \vT_3$ or $\vG[B]\cong \vT_3$. Then the probability that $\vG[A]\cong \vT_3$ or $\vG[B]\cong \vT_3$ is $3/4$. If $\vG[A]\not\cong\vT_3$ and $\vG[B]\not\cong \vT_3$, then $\vG[C]\not\cong \vT_3$.  Thus, the probability that at least one of $\vG[A], \vG[B],$ or $\vG[C]\cong \vT_3$ is 
            $$\frac{1}{2}\cdot \frac{1}{4} + \frac{1}{2}\cdot \frac{3}{4} = \frac{1}{2}.$$
        \end{enumerate}
        There are two orientations of $Y$ which fall into case (i) and six which fall into case (ii). Thus, since there are three possible vertices that could be $q$, the probability that neither $\vG[A], \vG[B]$ nor $\vG[C]\cong \vT_3$ for any $q=q_i$ is 
        $$\frac{1}{4}\left(\frac{1}{4}\right)^3 + \frac{3}{4}\left(\frac{1}{2}\right)^3\approx 0.098<\frac{1}{2^3}.$$
        \item Suppose $Y$ is $C_4 = uvwx$. Label the triangles as follows: $A = quv$, and $B = qvw$, $C = qwx$, $D = qxu$. Then there are four ways, up to symmetry and reversing of directions, to orient the edges of $Y$.
        \begin{enumerate}[label=(\roman*)]
            \item Suppose without loss of generality that 
            $$(u,v), (v,w), (w, x), (x,u)\in E(\vG).$$
            Notice that as long as there are edges $(q, t_1), (t_2, q)\in E(\vG)$, then at least one of $\vG[A], \vG[B]$ or $\vG[C]\cong \vT_3$. Thus the probability of this is 
            $$\frac{7}{8}.$$
            \item Suppose without loss of generality that 
            $$(u,v), (w,v), (w, x), (x,u)\in E(\vG).$$
            Suppose $qv$ is oriented so that it is impossible to have $\vG[A]\cong \vT_3$. Then $\vG[B]\not\cong \vT_3$ as well. The probability that $\vG[C]$ or $\vG[D]\cong \vT_3$ is $1/2$. Thus suppose $qv$ is oriented in such a way that we may have $\vG[A]\cong \vT_3$ or $\vG[B]\cong \vT_3$. Then the probability that $\vG[A]\cong \vT_3$ or $\vG[B]\cong \vT_3$ is $3/4$. If $\vG[A]\not\cong\vT_3$ and $\vG[B]\not\cong \vT_3$, then $\vG[C]\not\cong \vT_3$. The probability that $\vG[D]\cong\vT_3$ is $1/2$.  Thus, the probability that at least one of $\vG[A], \vG[B], \vG[C]$ or $\vG[D]\cong \vT_3$ is 
            $$\frac{1}{2}\cdot \frac{1}{2} + \frac{1}{2}\cdot \frac{3}{4} + \frac{1}{2}\cdot\frac{1}{4}\cdot\frac{1}{2} = \frac{11}{16}.$$
            \item Suppose without loss of generality that 
            $$(u,v), (w,v), (x,w), (x, u)\in E(\vG).$$
            Suppose $qv$ is oriented so that it is impossible to have $\vG[A]\cong \vT_3$. Then $\vG[B]\not\cong \vT_3$ as well. The probability that $\vG[C]$ or $\vG[D]\cong \vT_3$ is $3/8$. Thus suppose $qv$ is oriented in such a way that we may have $\vG[A]\cong \vT_3$ or $\vG[B]\cong \vT_3$. Then the probability that $\vG[A]\cong \vT_3$ or $\vG[B]\cong \vT_3$ is $3/4$. If $\vG[A]\not\cong\vT_3$ and $\vG[B]\not\cong \vT_3$, the probability that $\vG[C]$ and $\vG[D]\cong\vT_3$ is $1/2$.  Thus, the probability that at least one of $\vG[A], \vG[B], \vG[C]$ or $\vG[D]\cong \vT_3$ is 
            $$\frac{1}{2}\cdot \frac{3}{8} + \frac{1}{2}\cdot \frac{3}{4} + \frac{1}{2}\cdot\frac{1}{4}\cdot\frac{1}{2} = \frac{5}{8}.$$
            \item Suppose without loss of generality that 
            $$(u,v), (w, v), (x, w), (u, x)\in E(\vG).$$
            Suppose $qv$ is oriented so that it is impossible to have $\vG[A]\cong \vT_3$. Then $\vG[B]\not\cong \vT_3$ as well. The probability that $\vG[C]$ or $\vG[D]\cong \vT_3$ is $3/8$. Thus suppose $qv$ is oriented in a way that we may have $\vG[A]\cong \vT_3$ or $\vG[B]\cong \vT_3$. Then the probability that $\vG[A]\cong \vT_3$ or $\vG[B]\cong \vT_3$ is $3/4$. If $\vG[A]\not\cong\vT_3$ and $\vG[B]\not\cong \vT_3$, then we have $\vG[C]$ and $\vG[D]\not\cong \vT_3$.  Thus, the probability that at least one of $\vG[A], \vG[B], \vG[C]$ or $\vG[D]\cong \vT_3$ is 
            $$\frac{1}{2}\cdot \frac{3}{8} + \frac{1}{2}\cdot \frac{3}{4}= \frac{9}{16}.$$
        \end{enumerate}
        There are two orientations of $Y$ that fall into each of cases (i) and (iv), four which fall into case (iii) and eight which fall into case (ii). Thus, since there are three possible vertices that could be $q$, the probability that neither $\vG[A], \vG[B], \vG[C]$ nor $\vG[D]\cong \vT_3$ for any $q = q_i$ is 
        $$\frac{1}{8}\left(\frac{1}{8}\right)^3 + \frac{1}{2}\left(\frac{5}{16}\right)^3 + \frac{1}{4}\left(\frac{3}{8}\right)^3 + \frac{1}{8}\left(\frac{7}{16}\right)^3\approx 0.039<\frac{1}{2^4}.$$
        \item Suppose $Y$ is $C_5 = uvwxy$. Label the triangles as follows: $A = quv$, $B = qvw$, $C = qwx$, $D = qxy$, and $E = qyu$. Then there are four ways, up to symmetry and reversing of directions, to orient the edges $Y$.
    \begin{enumerate}[label=(\roman*)]
            \item Suppose without loss of generality that 
            $$(u,v), (v,w), (w, x), (x,y), (y,u)\in E(\vG).$$
            Notice that as long as there are edges $(q, t_1), (t_2, q)\in E(\vG)$, then at least one of $\vG[A], \vG[B]$ or $\vG[C]\cong \vT_3$. Thus the probability of this is 
            $$\frac{15}{16}.$$
            \item Suppose without loss of generality that 
            $$(u,v), (w,v), (w, x), (x,y), (y,u)\in E(\vG).$$
            Suppose $qv$ is oriented so that it is impossible to have $\vG[A]\cong \vT_3$. Then $\vG[B]\not\cong \vT_3$ as well. The probability that $\vG[C], \vG[D]$ or $\vG[E]\cong \vT_3$ is $11/16$. Thus suppose $qv$ is oriented in a way that we may have $\vG[A]\cong \vT_3$ or $\vG[B]\cong \vT_3$. Then the probability that $\vG[A]\cong \vT_3$ or $\vG[B]\cong \vT_3$ is $3/4$. If $\vG[A]\not\cong\vT_3$ and $\vG[B]\not\cong \vT_3$, then $\vG[C]\not\cong \vT_3$. The probability that $\vG[E]\cong\vT_3$ is $1/2$. If $\vG[E]\not\cong \vT_3$, then the probability that $\vG[D]\cong\vT_3$ is $1/2$.  Thus, the probability that at least one of $\vG[A], \vG[B], \vG[C], \vG[D]$ or $\vG[E]\cong \vT_3$ is 
            $$\frac{1}{2}\cdot \frac{11}{16} + \frac{1}{2}\cdot \frac{3}{4} + \frac{1}{2}\cdot\frac{1}{4}\cdot\frac{1}{2} + \frac{1}{2}\cdot\frac{1}{4}\cdot\frac{1}{2}\cdot\frac{1}{2} = \frac{13}{16}.$$
            \item Suppose without loss of generality that 
            $$(u,v), (w,v), (x,w), (x, y), (y,u)\in E(\vG).$$
            Suppose $qv$ is oriented so that it is impossible to have $\vG[A]\cong \vT_3$. Then $\vG[B]\not\cong \vT_3$ as well. The probability that $\vG[C], \vG[D]$ or $\vG[E]\cong \vT_3$ is $9/16$. Thus suppose $qv$ is oriented in a way that we may have $\vG[A]\cong \vT_3$ or $\vG[B]\cong \vT_3$. Then the probability that $\vG[A]\cong \vT_3$ or $\vG[B]\cong \vT_3$ is $3/4$. If $\vG[A]\not\cong\vT_3$ and $\vG[B]\not\cong \vT_3$, the probability that $\vG[C]$ or $\vG[E]\cong\vT_3$ is $3/4$. If $\vG[C], \vG[E]\not\cong \vT_3$, then $\vG[D]\not\cong \vT_3$.  Thus, the probability that at least one of $\vG[A], \vG[B], \vG[C], \vG[D]$ or $\vG[E]\cong \vT_3$ is 
            $$\frac{1}{2}\cdot \frac{9}{16} + \frac{1}{2}\cdot \frac{3}{4} + \frac{1}{2}\cdot\frac{1}{4}\cdot\frac{3}{4} = \frac{3}{4}.$$
            \item Suppose without loss of generality that 
            $$(u,v), (w, v), (x, w), (x, y), (u, y)\in E(\vG).$$
            Suppose $qv$ is oriented so that it is impossible to have $\vG[A]\cong \vT_3$. Then $\vG[B]\not\cong \vT_3$ as well. The probability that $\vG[C], \vG[D]$ or $\vG[E]\cong \vT_3$ is $1/2$. Thus suppose $qv$ is oriented in a way that we may have $\vG[A]\cong \vT_3$ or $\vG[B]\cong \vT_3$. Then the probability that $\vG[A]\cong \vT_3$ or $\vG[B]\cong \vT_3$ is $3/4$. If $\vG[A]\not\cong\vT_3$ and $\vG[B]\not\cong \vT_3$, then we have $\vG[E]\not\cong \vT_3$. The probability that $\vG[C]\cong \vT_3$ is $\frac{1}{2}$. If $\vG[C]\not\cong \vT_3$ then $\vG[D]\not\cong \vT_3$. Thus, the probability that at least one of $\vG[A], \vG[B], \vG[C], \vG[D]$ or $\vG[E]\cong \vT_3$ is 
            $$\frac{1}{2}\cdot \frac{1}{2} + \frac{1}{2}\cdot \frac{3}{4} + \frac{1}{2}\cdot\frac{1}{4}\cdot\frac{1}{2}= \frac{11}{16}.$$
        \end{enumerate}
        There are two orientations of $Y$ that fall into of case (i) and ten which fall into each of cases (ii), (iii), and (iv). Thus, since there are three possible vertices that could be $q$, the probability that neither $\vG[A], \vG[B], \vG[C]$ nor $\vG[D]\cong \vT_3$ for any $q = q_i$ is 
        $$\frac{1}{16}\left(\frac{1}{16}\right)^3 + \frac{5}{16}\left(\frac{3}{16}\right)^3 + \frac{5}{16}\left(\frac{1}{4}\right)^3 + \frac{5}{16}\left(\frac{5}{16}\right)^3 \approx 0.016 <\frac{1}{2^5}.$$
    \end{enumerate}
\end{proof}
\begin{proof}[Proof of Claim \ref{paths-claim}]
Enumerate the three triangles of $\vF_{3,3}$ by $\vT_1, \vT_2, \vT_3$. Let $\vG$ be an orientation of $G$ chosen uniformly at random. Consider two copies of $P_3$: $x_1y_1z_1\in G[X_1]$ and $x_2y_2z_2\in G[X_2]$ and suppose we also have edges $y_1x_2, y_1z_2, y_2x_1, y_2z_1$, and $y_1y_2$ in $G$. Let $Y = G[\{x_1, y_1, z_1, x_2, y_2, z_2\}]$.  We aim to find in $Y$ a copy of $\vT_1$ such that the center of $\vT_1$ is at either $y_1$ or $y_2$ and the other two vertices are in the opposite part. Note then that there are four such triangles (whose vertices are $y_1, y_2$ and one of $x_1, x_2, z_1, z_2$) in $Y$. Regardless of how the edge $y_1y_2$ is oriented, there is one orientation of the remaining two edges in each of these four triangles so that they form a copy of $\vT_1$.  Thus, the probability that none of the four triangles is isomorphic to $\vT_1$ is $(\frac{3}{4})^4$.  Now, suppose we have a third copy of $P_3$, $x'_1y'_1z'_1\in G[X_1]$, along with edges $y'_1x_2, y'_1z_2, y_2x'_1, y_2z'_1$, and $y'_1y_2$ in $G$. Call $Y' =G[\{x_1', y_1', z_1', x_2, y_2, z_2\}]$. Then the event that there is a such a copy of $\vT_1$ in $Y'$ is independent of the event that there is a copy of $\vT_1$ in $Y$. Then the probability that there is no copy of $\vT_1$ in $Y$ or $Y'$ is strictly at most than $(\frac{3}{4})^8<\frac{1}{4}$. 

Suppose without loss of generality that the vertices $y_1x_2y_2$ form a copy of $\vT_1$.  Then there are at least $\frac{n}{2}-5$ vertices $v\in V_2$ such that $y_1v, x_1v, z_1v$ are all edges. Let $u, v\in N_{V_2}(\{y_, x_1, z_1\})$. Then $y_1x_1u$ and $y_1z_1v$ are both triangles. Because $\vF_{3,3}$ is antidirected, no matter how the edges $y_1x_1$ and $y_1z_1$ are oriented, there is a way to extend these orientations to copies of $\vT_2$ and $\vT_3$ respectively. Since there are at least $n/5$ choices of the pair $u,v$, the probability that none of these pairs, along with $x_1,y_1, z_1$ form copies of $\vT_1, \vT_2$ both centered at $y_1$ is at most 
$$\epsilon\defeq \left(\frac{15}{16}\right)^{n/5}$$
Therefore, for large enough $n$, if we choose an orientation $\vG$ of $G$ at random, the probability that $\vG$ does not contain a $3$-fan is at most $(\frac{3}{4})^8 + \epsilon<\frac{1}{4}$.  Now, since our graph $G$ only contains a maximum of $\floor{\frac{n^2}{4}} + 8$ edges to begin with, $G$ can be oriented in fewer than 
$$\frac{1}{4}2^{\floor{\frac{n^2}{4}} + 8}= 2^{\floor{\frac{n^2}{4}} + 6}$$
ways, a contradiction. 
\end{proof}

\printbibliography

\end{document}